\numberwithin{equation}{section}
\newtheorem{proposition}{Proposition}[section]
\newtheorem{theorem}{Theorem}
\newtheorem{corollary}{Corollary}[section]
\newtheorem{conjecture}{Conjecture}
\newtheorem{lemma}{Lemma}[section]
\newtheorem{definition}{Definition}[section]
\newcommand*{\PP}[2][]{\mathbb{P}_{#1}\left[#2\right]}
\newcommand*{\pp}{\mathbf{P}}%
\newcommand*{\E}[2][]{\mathbb{E}_{#1\unskip\space}\left[#2\right]}
\newcommand*{\EE}{\mathbb{E}}%
\newcommand*{\cO}[1]{\mathcal{O}\left(#1\right)}
\newcommand*{\co}[1]{\oo\left(#1\right)}
\newcommand*{\cOm}[1]{\Omega\left(#1\right)}
\newcommand*{\cT}[1]{\Theta\left(#1\right)}
\newcommand*{\cE}[1]{\exp\left\{#1\right\}}
\newcommand*{\oo}{\mathrm{o}}
\newcommand*{\mO}[1]{\left(1 + \mathcal{O}\left(#1\right)\right)}
\newcommand*{\mo}[1]{\left(1 + \mathrm{o}\left(#1\right)\right)}
\newcommand*{\K}{\mathcal{K}}%
\newcommand*{\M}{\mathcal{M}}%
\newcommand{\disc}{\mathrm{disc}}
\newcommand*{\Sa}{\sigma_{\ddag}}%
\newcommand*{\Sb}{\sigma_{\pm}}%
\newcommand*{\Sc}{\sigma_=}%
\newcommand*{\Sd}{\sigma_{\mp}}%
\newcommand*{\Ta}{\Sa}%
\newcommand*{\Tb}{\Sb}%
\newcommand*{\Tc}{\Sc}%
\newcommand*{\Td}{\Sd}%
\newcommand*{\dd}{l}
\newcommand*{\Ej}{E}
\newcommand*{\e}{\mathrm{e}}
\newcommand*{\ts}{\Tilde{s}}
\newcommand*{\cEws}{\e^{-\cT{\frac{1}{w}}\left( \frac{w}{2}-s\right)^2}} %
\newcommand*{\antisym}{\Xi}%
\newcommand*{\QEDA}{\null\nobreak\hfill\ensuremath{\square}}
\newcommand*{\ZZ}{\mathbb{Z}}%
\newcommand*{\RR}{\mathbb{R}}%
\newcommand*{\NN}{\mathbb{N}}%
\newcommand*{\WW}{\mathcal{W}}%
\newcommand*{\ind}{\mathbbm{1}}%
\newcommand*{\la}{\langle}%
\newcommand*{\ra}{\rangle}%
\newcommand*{\eps}{\epsilon}%
\newcommand*{\del}{\delta}%
\newcommand{\Pois}{\mathrm{Po}}
\newcommand{\Bin}{\mathrm{Bin}}
\newcommand{\defeq}{\coloneqq}
\let\originalleft\left
\let\originalright\right
\def\left#1{\mathopen{}\originalleft#1}
\def\right#1{\originalright#1\mathclose{}}
\title{The Discrepancy of Random Rectangular Matrices}
\author{Dylan J. Altschuler}
\address{D.J.\ Altschuler\hfill\break
Courant Institute\\ New York University\\
251 Mercer Street\\ New York, NY 10012, USA.}
\email{dylan.altschuler@courant.nyu.edu}
\author{Jonathan Niles-Weed}
\address{J. \ Niles-Weed \hfill\break
Courant Institute\\ New York University\\
251 Mercer Street\\ New York, NY 10012, USA.}
\email{jnw@cims.nyu.edu}
\thanks{DJA and JNW were supported in part by NSF grant DMS-2015291. DJA was supported in part by NSF Graduate Research Fellowship Program grant DGE-1839302.}
\begin{document}

\maketitle
\begin{abstract}
A recent approach to the Beck--Fiala conjecture, a fundamental problem in combinatorics, has been to understand when random integer matrices have constant discrepancy. We give a complete answer to this question for two natural models: matrices with Bernoulli or Poisson entries. For Poisson matrices, we further characterize the discrepancy for any rectangular aspect ratio. These results give sharp answers to questions of Hoberg and Rothvo\ss~(SODA 2019) and Franks and Saks (\emph{Random Structures Algorithms} 2020). Our main tool is a conditional second moment method combined with Stein's method of exchangeable pairs. While previous approaches are limited to dense matrices, our techniques allow us to work with matrices of all densities. This may be of independent interest for other sparse random constraint satisfaction problems.

\smallskip
\noindent \textbf{Keywords.} Discrepancy, Stein's method, random constraint satisfaction, sparse random graphs.

\end{abstract}

\section{Introduction}
Given a matrix $A \in \RR^{m \times n}$, its \emph{discrepancy} is
\begin{equation}\label{eq:discrepancy}
\disc(A) \defeq \min_{u \in \{-1, +1\}^n} \|A u\|_{\infty}\,.
\end{equation}
If $A$ has entries in $\{0,1\}$, we can view $A$ as the incidence matrix of a collection of $m$ subsets of $[n]$. Then this optimization problem asks to partition $[n]$ into two classes so that the imbalance of each of the $m$ sets is small.
Discrepancy measures how well this partitioning can be done.
This problem, a natural generalization of graph coloring, has been studied widely in combinatorics and computer science, due to its connections to problems such as integer rounding, set balancing, and metric embeddings~\cite{Cha00,Spe94}.

The foundational result in the field is Spencer's celebrated ``Six Standard Deviations Suffice''~\cite{spencer1985six}, which states that $\disc(A) \leq 6 \sqrt{n}$.
By contrast, if $m \asymp n$, a typical vector $u \in \{-1, +1\}^n$ has $\|A u\|_{\infty} = \cT{\sqrt{n \log n}}$.
Spencer's result reveals the surprising fact that it is possible to find a vector $u$ for which $\|A u\|_{\infty}$ is much smaller than this typical value.

Spencer's theorem is unimprovable in general, but it raises the question of which matrices enjoy better bounds---specifically, bounds that are independent of the matrix dimensions.
Beck and Fiala~\cite{beck1981integer} showed that if $A$ is the incidence matrix of a {\em $t$-sparse} set-system, i.e. $A$ has binary entries and columns summing to at most $t$, then $\disc(A) \leq 2t - 1$, independent of the dimensions of the matrix.
They further conjectured that this bound is improvable to $\disc(A) = \cO{\sqrt t}$.
This conjecture has resisted significant progress: to date, the best dimension-independent bound, due to Bukh, is just $\disc(A) \leq 2t - \log^* t$, where $\log^*$ is the iterated logarithm function~\cite{bukh2016beck}.
If mild dependence on the dimension is allowed, then the best bound is due to Banaszczyk~\cite{banaszczyk1998balancing}, who showed that $\disc(A) = \cO{\sqrt{t \log n}}$.
Neither approach seems likely to yield a proof of Beck and Fiala's conjectured bound.

Since the Beck-Fiala conjecture seems beyond the grasp of current techniques, there has been recent interest in understanding randomized versions of the problem.
A striking finding of this line of work is that the random setting evinces sharply different behavior in two different regimes: loosely speaking, past results show that a random binary $m \times n$ matrix $A$ has discrepancy $\cT{\sqrt n}$ when $n \asymp m$, but when $n$ is significantly larger than $m$, then a dimension-free bound is possible---in fact, $\disc(A) = 1$ with high probability.
This is the smallest possible discrepancy, since any row whose sum is odd must have discrepancy at least $1$.

Prior work has investigated this phenomenon in a variety of different parameter ranges, but understanding exactly when $\disc(A) \leq 1$ is achievable has remained an open question.
If $A$ is an $m \times n$ matrix\footnote{For notational simplicity, we restrict to even $n$ in the remainder of our paper.} with independent Bernoulli$(p)$ entries, then it is known that $\disc(A) = \cO{1}$ with high probability as long as $A$ is very wide ($n \gg m^2$) and relatively dense ($p \gg n^{-1/2}$).
In the special symmetric case where $p = 1/2$, Potukuchi \cite{potukuchi2018discrepancy} proved that $\disc(A) \leq 1$ as long as $n \geq C m \log m$ for a sufficiently large constant, but this is not known for any other values of $p$.
Taken together, these works suggest the presence of a threshold above which constant discrepancy is achievable, but they do not give a hint of where this threshold should be.

Our first main result solves this problem by identifying the precise location at which this transition occurs. Strikingly, the threshold is independent of $p$, and is valid even for $p = p(n)$ varying with $n$.
\begin{theorem}\label{main}
Let $A \in \{0,1\}^{m \times n}$ be a random matrix whose entries are i.i.d.~Bernoulli random variables with parameter $p := p(n)$.
There exists a universal constant $C > 0$ such that if $n \geq C m \log m$, then $\disc(A) \leq 1$ with high probability.
\end{theorem}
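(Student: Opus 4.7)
Let $Z := |\{u \in \{-1,+1\}^n : \|A u\|_\infty \le 1\}|$, so that $\{\disc(A)\le 1\}=\{Z\ge 1\}$. The plan is to run a conditional second moment method on a restricted count. Restrict attention to the set of balanced signings $\mathcal{B} := \{u \in \{-1,+1\}^n : \sum_j u_j = 0\}$, set $Z' := |\{u \in \mathcal B : \|A u\|_\infty \le 1\}|$, and condition on a high-probability ``regularity'' event $\mathcal G$ that forces the row sums $r_i := \sum_j A_{ij}$ (and possibly a handful of other macroscopic statistics of $A$) to lie close to their means. Since $\PP{\mathcal G} \to 1$, by Paley--Zygmund it suffices to prove
\[
\EE[(Z')^2 \mid \mathcal G]\;\le\;\cO{1}\cdot \EE[Z' \mid \mathcal G]^2.
\]

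\textbf{First moment.}
For balanced $u$ the rows $\langle A_i, u\rangle$ are independent, each distributed as $X-Y$ with $X,Y\sim\Bin(n/2,p)$, and $\langle A_i,u\rangle$ inherits the parity of $r_i$; hence $|\langle A_i,u\rangle|\le 1$ pins $\sum_{j:u_j=-1} A_{ij}$ to one (if $r_i$ is even) or two (if $r_i$ is odd) specific values near $r_i/2$. Stein's method of exchangeable pairs applied to the increment structure of $\Bin(n/2,p)$ yields a local-CLT estimate uniform in $p$ and typical $r_i$,
\[
\PP{|\langle A_i, u\rangle|\le 1 \mid r_i}\;\asymp\;\frac{1}{\sqrt{1+np(1-p)}}\,.
\]
Multiplying over the $m$ independent rows and using $|\mathcal B|\gtrsim 2^n/\sqrt n$ gives
\[
\EE[Z' \mid \mathcal G]\;\gtrsim\;\frac{2^n}{\sqrt n}\left(1+np(1-p)\right)^{-m/2},
\]
which is $2^{\Omega(n)}$ as soon as $n\ge Cm\log m$ for $C$ large, uniformly in $p$ (indeed $\tfrac{m}{2}\log(1+np(1-p))\le \tfrac{m}{2}\log(1+n)\ll n\log 2$ in this range).

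\textbf{Second moment and main obstacle.}
Organize $\EE[(Z')^2 \mid \mathcal G]$ by the overlap $\rho:=\tfrac{1}{n}\langle u,v\rangle$. For $u,v\in\mathcal B$ at overlap $\rho$, the bivariate row vector $(\langle A_i,u\rangle,\langle A_i,v\rangle)$ has covariance $np(1-p)\,\bigl(\begin{smallmatrix}1 & \rho \\ \rho & 1\end{smallmatrix}\bigr)$, and a bivariate version of Stein's method supplies the matching local estimate
\[
\PP{|\langle A_i,u\rangle|,|\langle A_i,v\rangle|\le 1\mid r_i}\;\lesssim\;\frac{1}{1+np(1-p)\sqrt{1-\rho^2}}\,.
\]
Combined with the entropy count $|\{v\in\mathcal B:\langle u,v\rangle=n\rho\}|\asymp (2^n/n)\,e^{-n\rho^2/2}$ in the bulk (by Stirling), the window $|\rho|=\cO{1/\sqrt{m}}$ contributes exactly $\cO{1}\cdot\EE[Z'\mid\mathcal G]^2$. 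The heart of the argument is the near-diagonal regime $|\rho|\to 1$, where the Gaussian factor $(1-\rho^2)^{-1/2}$ diverges: this is where the unconditioned second moment breaks down in sparse regimes, since, for example, a single weight-$0$ column produces a macroscopic family of almost-equal signings sharing the same discrepancy vector and inflates the second moment unboundedly. The role of $\mathcal G$ is precisely to exclude such pathological configurations, so that the entropy suppression of overlap-$\rho$ pairs dominates the Gaussian blowup uniformly in $p$. Making the Stein-based bivariate local CLT quantitatively sharp deep into the sparse regime $np(1-p)=\oo(1)$, and matching it against a near-diagonal entropy bound valid at every density $p$, is where I expect the bulk of the technical work to lie.
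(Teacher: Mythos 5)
Your proposal has the right skeleton (balanced vectors, conditioning on row statistics, second moment organized by overlap), but it misplaces the difficulty and, as stated, would not deliver the theorem. First, your target inequality $\EE[(Z')^2\mid\mathcal G]\le \cO{1}\,\EE[Z'\mid\mathcal G]^2$ only gives $\PP{Z'>0}\ge c$ for a constant $c$ via Paley--Zygmund, not ``with high probability.'' The paper needs the conditional ratio to converge to $1$ (Janson's conditional Paley--Zygmund), which forces the per-row comparison $\PP{u,v\in G_i}/\PP{u\in G_i}^2 = (1+Cx^2)(1+\oo(1/m))$ with $x=\rho/2$ --- an estimate accurate to multiplicative error $1+\oo(1/m)$, not up to constants. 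Second, and relatedly, you identify the near-diagonal regime $|\rho|\to 1$ as ``the heart of the argument,'' but that regime is disposed of trivially by entropy: the number of pairs at overlap $\rho$ bounded away from $0$ is $\e^{\mo{1}nH(2\delta)}$, which is swamped by $\EE[Z']\ge\e^{cn}$, so a crude per-row bound $\phi\le C_\delta\psi^2$ suffices there. The genuinely hard regime is the central window $\rho$ near $0$, exactly the one you declare ``contributes exactly $\cO{1}\cdot\EE[Z']^2$'': your bivariate local estimate is stated only up to constants ($\lesssim$), and a constant-factor per-row loss compounds to $C^m$ over the $m$ rows, destroying the bound. Classical (or Stein-flavored) local CLTs give the central estimate only to accuracy $1+\cO{1/(np)}$, which is fatal precisely when $m/(np)\not\to 0$; this is why the paper does \emph{not} approximate the univariate and bivariate probabilities separately and divide, but instead uses exchangeable pairs to directly compare the law of $\langle u,A_i\rangle$ with its law conditioned on $\{v\in G_i\}$, obtaining error $1+\cO{1/n}=1+\oo(1/m)$.

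Two further points. You should be explicit about conditioning on the exact row sums (not merely a regularity event $\mathcal G$ keeping them near their means): in the sparse regime the second moment is dominated by row-sum fluctuations unless you fix them, and this is what the paper's \cref{mnp} shows goes wrong otherwise. And there is a parity obstruction you gloss over: the event $(Au)_i\in\{-1,0,1\}$ biases the parity of row $i$ (two odd targets versus one even), which skews the second moment; the paper circumvents this by conditioning on even row parities, proving $\disc=0$ there, and transferring to $\disc\le 1$ via an explicit coupling (\cref{parity_coupling}). Your plan needs an analogous device.
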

The constant in \cref{main} can be made explicit: under mild assumptions, it suffices to let $C$ be any constant strictly larger than $(2 \log 2)^{-1}$, which is precisely the threshold at which the expected number of low-discrepancy vectors becomes large. It is easy to see that this cannot be improved in general; for example, if $p = 1/2$ and $n = C m \log m$ for $C< (2\log 2)^{-1}$, then by Markov's inequality the probability that $\disc(A) = \cO{1}$ is exponentially small. 

\Cref{main} shows that $\disc(A) \leq 1$ holds when $n  \geq C m \log m$ for a sufficiently large constant $C$ irrespective of the value of $p$, substantially generalizing Potukuchi's result~\cite{potukuchi2018discrepancy}.
As we discuss in more detail below, the sparse regime where $p = \oo(1)$ evinces fundamentally different behavior from the $p = 1/2$ case, and requires different techniques.
The key challenge is that approximations based on the central limit theorem---which are valid for dense matrices---become too inaccurate when $A$ is sparse. We therefore need to develop tools to obtain precise approximations in a regime where the CLT and other classic asymptotic methods break down.

Establishing \cref{main} requires a quantitative understanding of how dependent the events $\{\|Ax\|_{\infty} \le 1\}$ and $\{\|Ay\|_{\infty} \le 1\}$ are, for pairs of vectors $x,y\in \{\pm 1\}^n$. More precisely, we need an upper-bound on 
\begin{equation}\label{eq:2mm-goal-intro}
    \PP{|Ay\|_{\infty} \le 1 \big |\|Ax\|_{\infty} \le 1} - \PP{\|Ay\|_{\infty} \le 1}
\end{equation}
When $A$ is sufficiently dense, both probabilities in \cref{eq:2mm-goal-intro} can be individually approximated to sufficient accuracy via classical tools. However, when $A$ is too sparse, this na\"ive approach no longer succeeds. To handle this difficulty, we instead directly compare the two probabilities using a version of Stein's method called {\em the method of exchangeable pairs} ~\cite{barbour1992poisson,diaconis2004stein}.

Originally developed to prove CLTs~\cite{stein1972bound}, Stein's method has proven to be a powerful general tool for establishing limit laws for dependent random variables. Informally, to compare a complicated distribution $\mu_c$ on a set $\mathcal X$ to a target distribution $\mu_0$, Stein proposed to find an operator $T_0$, acting on functions from $\mathcal X \to \RR$, which satisfies the requirement that
\begin{equation*}
\E[\mu_0]{T_0 f} = 0 \quad \forall f: \mathcal X \to \RR\,.
\end{equation*}
Then, so long as it can be shown that $\E[\mu_c]{T_0 f} \approx 0$ for all $f$ in a set of suitably rich test functions, one can conclude that the distribution $\mu_c$ is close to $\mu_0$.
Though proving $\E[\mu_c]{T_0 f} \approx 0$ by hand can be challenging, it is often possible to find an operator $T_c$, satisfying $\E[\mu_c]{T_c f} = 0$ for all $f: \mathcal X \to \RR$, such that $\|T_c f - T_0 f\|_\infty$ is small---in this case, we will have
\begin{equation*}
|\E[\mu_c]{T_0 f}| = |\E[\mu_c]{T_0 f}- \E[\mu_c]{T_c f}| \leq \|T_c f - T_0 f\|_\infty \approx 0\,,
\end{equation*}
which is the desired claim. The method of exchangeable pairs gives a simple way of constructing $T_0$ and $T_c$ from reversible Markov chains with stationary measures $\mu_0$ and $\mu_c$.

To use Stein's method, we view \cref{eq:2mm-goal-intro} as an expression measuring how different the law of $\|A y\|_\infty$ is from the law of $\|A y\|_\infty$ conditioned on $\{\|A x\|_\infty \leq 1\}$. To compare these two measures, we construct two Markov chains with the measures as stationary distributions, and use the method of exchangeable pairs to find suitable operators $T_0$ and $T_c$.
By ensuring that the two Markov chains have similar transition probabilities, we can guarantee that $T_0 - T_c$ is easy to control.
Though Stein's method is well known in the probability literature, its use in the context of the second moment method appears to be new.

\Cref{main} is stated for matrices with Bernoulli entries. In addition to studying this model, we also introduce a natural extension, about which we can prove more powerful bounds.
We formalize these two models in the following definition.
\begin{definition}[Bernoulli, Poisson Ensembles]
    Let $A$ be an $m \times n$ random matrix with independent and identically distributed entries. If $A_{ij}$ is Bernoulli$(p)$, we say $A$ is from the $(m,n,p)$-Bernoulli ensemble. If $A_{ij}$ is Poisson$(\lambda)$, we say $A$ is from the $(m,n,\lambda)$-Poisson ensemble.
\end{definition}
By symmetry, we may always assume in the Bernoulli ensemble that $p \leq 1/2$.
It is useful to view both ensembles as the adjacency matrices of random bipartite factor graphs, where the columns of $A$ correspond to the vertices and the rows correspond to the factors.
The Bernoulli ensemble corresponds to an Erd\H os-R\'enyi model, which is the one common in the recent discrepancy literature.
The Poisson ensemble is a natural extension in which multi-edges are allowed.

In the Poisson ensemble, we are able to prove a significant generalization of \cref{main} by characterizing the behavior of the discrepancy for any rectangular matrix with $m = \oo(n)$.
We first define a convenient set of candidate solutions.
\begin{definition}\label{def:balanced}
A vector $u \in \{-1, +1\}^n$ is \emph{balanced} if $\sum_{i=1}^n u_i = 0$. We write $\mathcal{B}$ for the set of balanced vectors.
For a random matrix $A$ and any $r \geq 0$, we write $Z_r$ for the random variable equal to the number of $u \in \mathcal{B}$ for which $\|A u\|_\infty \leq r$.
\end{definition}
Our second main theorem characterizes the discrepancy of Poisson matrices: a matrix from the Poisson ensemble has discrepancy at most $r$ so long as $\E{Z_r}$ is large.
\begin{theorem}\label{fullcurve}
Let $A$ be drawn from the $(m, n, \lambda$)-Poisson ensemble.
If $m = \oo(n)$ and~$\log \E{Z_r} = \cT{n}$, then~$\disc(A) \leq r + 1$ with high probability.
\end{theorem}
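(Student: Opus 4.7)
The plan is to establish the conclusion by a second moment argument applied to $Z_{r+1}$, combined with the exchangeable-pair form of Stein's method described in the introduction. Since $\E{Z_{r+1}} \geq \E{Z_r}$, the hypothesis gives $\log \E{Z_{r+1}} = \cT{n}$ automatically. My target is the sharp bound $\E{Z_{r+1}^2} = \mo{1}\,\E{Z_{r+1}}^2$, which combined with Chebyshev's inequality yields $Z_{r+1} > 0$ with high probability and hence $\disc(A) \leq r+1$.

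\textbf{Reduction to a conditional probability ratio.} The first step is a standard second moment manipulation. Fix any balanced reference $x \in \mathcal{B}$. Using the permutation-invariance of both the Poisson ensemble and $\mathcal{B}$, one can rewrite
\[
\frac{\E{Z_{r+1}^2}}{\E{Z_{r+1}}^2} = \frac{1}{|\mathcal{B}|} \sum_{y \in \mathcal{B}} \frac{\PP{\|Ay\|_\infty \leq r+1 \,\big|\, \|Ax\|_\infty \leq r+1}}{\PP{\|Ay\|_\infty \leq r+1}}\,.
\]
The goal reduces to showing this average is $\mo{1}$. I would partition the vectors $y \in \mathcal{B}$ by their overlap $\alpha = \langle x, y\rangle/n$ with $x$. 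For $|\alpha|$ bounded away from zero, the pairs are entropically sparse: their count is smaller than $|\mathcal{B}|$ by a factor of $e^{-\cOm{n}}$. Their contribution can therefore be absorbed using only crude tail bounds on the joint row probabilities, which factor across rows by the independence in the Poisson ensemble. The essential task is the small-overlap regime, where the conditional and unconditional probabilities must be compared sharply.

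\textbf{Stein comparison via exchangeable pairs.} For the typical $y$ with small overlap, the required $\mo{1}$ comparison would come from Stein's method. I would construct a pair of Glauber-type reversible Markov chains on the space of $m \times n$ non-negative integer matrices: one with stationary measure $\mu_0$ equal to the unconditional Poisson law, and one with stationary measure $\mu_c$ equal to this law conditioned on $\|Ax\|_\infty \leq r+1$. Coupling the two chains so that their transitions agree except when a proposed update would carry them out of the conditioning set, and applying the exchangeable pair construction, would produce Stein operators $T_0$ and $T_c$ for which the comparison reduces to a pointwise estimate $\|T_0 f - T_c f\|_\infty = \oo(1)$ on the test function $f = \ind\{\|Ay\|_\infty \leq r+1\}$.

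\textbf{Main obstacle.} The hardest step, and the reason for the $+1$ loss in the statement, is producing a Stein error that is genuinely $\oo(1)$ rather than just $\cO{1}$. The extra unit of slack is used precisely here: the indicator $\ind\{\|Ay\|_\infty \leq r+1\}$ is insensitive to perturbations of $A$ except on the boundary event $\{\|Ay\|_\infty = r+1\}$, which I would expect to have small probability relative to the bulk $\PP{\|Ay\|_\infty \leq r+1}$. Without the $+1$ buffer, the analogous boundary $\{\|Ay\|_\infty = r\}$ would carry mass of the same order as $\PP{\|Ay\|_\infty \leq r}$ itself---essentially the first moment computation lives on that boundary---and the Stein error would no longer be negligible. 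Finally, the hypothesis $m = \oo(n)$ enters implicitly, ensuring that the $m$-many per-row constraints do not erode the entropy $\log|\mathcal{B}| = n\log 2 - \oo(n)$ to the point of making the second moment argument infeasible.
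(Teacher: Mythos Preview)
Your proposal has a genuine gap: you have misidentified both the source of the ``$+1$'' and the mechanism that makes the second moment method succeed.

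First, the ``$+1$'' in the paper does \emph{not} come from a boundary buffer in the Stein comparison. It comes from \cref{parity_coupling}: one works with $Z_r$ in the ensemble conditioned on even row parities, proves $\disc(A')\le r$ there, and then the coupling transfers this to $\disc(A)\le r+1$. Your claim that the boundary event $\{\|Ay\|_\infty = r+1\}$ has small probability relative to $\{\|Ay\|_\infty \le r+1\}$ is simply false: per row, $\PP{|(Ay)_i|=r+1}/\PP{|(Ay)_i|\le r+1}$ is a constant of order $1/(2r+3)$, so across $m\to\infty$ rows the boundary carries almost all the mass of the conditioning event, not a negligible fraction. The Stein error you describe would therefore not be $\oo(1)$.

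Second, and more seriously, you are missing the row-sum conditioning that the paper identifies as essential. \Cref{mnp} shows that the unconditioned second moment $\E{Z_0^2\mid P}/\E{Z_0\mid P}^2$ is exponentially large when $m/(np)\not\to 0$; the same obstruction applies to $Z_{r+1}$ in the sparse regime. The paper's actual argument (\cref{fullcurve_smm}) conditions on the full vector of row sums $W=w$, which makes each row a multinomial draw with fixed total, and then applies the second moment method row-by-row via \cref{2mm}. The Stein comparison is carried out for a \emph{single row} between the laws of $\langle u,A_i\rangle$ unconditioned and conditioned on $\langle v,A_i\rangle\in\K_i$, using an explicit exchangeable pair on a four-category multinomial (\cref{po-f-bound}, \cref{po-with-indicator-bd}). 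Your proposed Glauber chain on the full matrix space, with no conditioning on $W$, does not address the failure mode in \cref{mnp} and is too coarse to yield the required $1+\oo(1/m)$ per-row accuracy. The reduction from \cref{fullcurve} to \cref{fullcurve_smm} also requires nontrivial case analysis on the size of $\lambda n$ to verify the first-moment hypothesis uniformly over typical $w$, none of which appears in your sketch.
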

\Cref{fullcurve} shows that the prediction based on the annealed entropy $\log \E{Z_r}$ is correct: as soon as low discrepancy solutions exist in expectation, they exist with high probability. Note that a converse statement holds by Markov's inequality: if the expected number of solutions is vanishing, then with high probability there are no solutions.
This theorem captures the transition from constant discrepancy to $\cT{\sqrt n}$ discrepancy that occurs as $n$ ranges between $m$ and $m \log m$.
For example, if $n \ge m$ and $n\lambda = \omega(1)$, then it is straightforward to verify that $\log \E{Z_r} = \cT{n}$ as long as $r = \cOm{2^{-n/m}\sqrt{n\lambda}}$, and \cref{fullcurve} therefore guarantees that $\disc(A) = \cO{2^{-n/m}\sqrt{n\lambda} + 1}$ for such matrices with high probability.
Apart from the $+1$ term, this is the same discrepancy bound that prior work shows is achievable for Gaussian matrices with i.i.d.~$\mathcal N(0, \lambda)$ entries~\cite{turner2020balancing}.
\Cref{fullcurve} therefore implies that the Poisson ensemble has similar qualitative behavior to a corresponding Gaussian model, even though our proofs reveal that there are significant technical differences between the two settings. 

We also remark that since $\sqrt{m\lambda} = \cOm{2^{-n/m}\sqrt{n\lambda}}$, \cref{fullcurve} matches the conjectured Beck--Fiala bound by analogy, where $m\lambda$ is the \textit{average} column sparsity (see \cref{fig}, right). Returning to our original motivation of understanding when matrices have constant discrepancy, we have as an easy consequence of \cref{fullcurve} an analogue of \cref{main} for Poisson matrices.

\begin{corollary}\label{cor:poisson-constant-discrep}
    For $A$ drawn from the $(m,n,\lambda)$-Poisson ensemble with $\lambda \le \text{poly}(n)$ and $n > C m \log m$ for some universal constant $C$, then $\disc(A) = 1$ with high probability.  
\end{corollary}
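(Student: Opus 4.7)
My plan is to derive this corollary directly from \cref{fullcurve} applied with $r = 0$. It suffices to verify the two hypotheses of that theorem: $m = \oo(n)$, which is immediate from $n/m \geq C\log m \to \infty$, and $\log \E{Z_0} = \cT{n}$. So the work reduces entirely to an annealed first-moment computation showing that $\E{Z_0}$ is exponentially large in $n$.

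For a balanced $u \in \mathcal B$ with sign partition $S_+ \sqcup S_- = [n]$ (so $|S_\pm| = n/2$), each coordinate $(Au)_i = \sum_{j \in S_+} A_{ij} - \sum_{j \in S_-} A_{ij}$ is the difference of two independent $\mathrm{Poisson}(n\lambda/2)$ random variables, that is, a Skellam random variable. Its probability of equaling zero is $q \coloneqq e^{-n\lambda} I_0(n\lambda)$, where $I_0$ denotes the modified Bessel function of the first kind. Since the rows of $A$ are independent,
\begin{equation*}
\E{Z_0} = \binom{n}{n/2} q^m, \qquad \log \E{Z_0} = n \log 2 + m \log q - O(\log n).
\end{equation*}

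The quantitative step is a uniform upper bound on $|\log q|$, for which the two regimes of $n\lambda$ must be handled separately. When $n\lambda \lesssim 1$, the expansion $I_0(n\lambda) = 1 + O((n\lambda)^2)$ gives $|\log q| = O(1)$; when $n\lambda \to \infty$, the asymptotic $I_0(x) \sim e^x/\sqrt{2\pi x}$ yields $\log q = -\tfrac12 \log(2\pi n\lambda) + \oo(1)$. Combining both regimes, $|\log q| = O(\log(1 + n\lambda))$, which becomes $O(\log n)$ under the hypothesis $\lambda \le \mathrm{poly}(n)$. I expect the main nuisance of the proof to be this gluing of the two Bessel asymptotics into a single clean bound, though the computation is otherwise mechanical.

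It remains to check that $m\log n$ is negligible compared to the leading term $n\log 2$. Setting $t \coloneqq n/m \geq C\log m$, one has $m\log n / n = (\log m + \log t)/t \leq 1/C + \log t/t$, which is less than any chosen $\varepsilon > 0$ once $C = C(\varepsilon)$ is sufficiently large (with $m$ large). Consequently $\log \E{Z_0} \geq (\log 2 - \varepsilon) n = \cT{n}$, and \cref{fullcurve} yields $\disc(A) \leq r + 1 = 1$ with high probability, as claimed.
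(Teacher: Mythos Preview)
Your proof is correct and follows exactly the approach the paper intends: the corollary is stated as an immediate consequence of \cref{fullcurve} with $r = 0$, and the paper leaves the first-moment verification implicit (cf.\ the analogous computation~\eqref{eq:po-fmm} carried out later in the proof of \cref{fullcurve}). Your explicit Skellam/Bessel computation of $q = e^{-n\lambda}I_0(n\lambda)$ is a clean way to unify the $n\lambda = O(1)$ and $n\lambda \to \infty$ regimes that the paper treats by cases.
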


The proofs of \cref{main,fullcurve} are nonconstructive and leave open the question of whether it is possible to find a vector $u$ achieving $\|A u\|_{\infty} = 1$ in polynomial time.
As suggested by Aubin et al.~\cite{aubin2019storage}, it is possible to compare our model to a planted version of the discrepancy problem where the matrix $A$ is generated from the Bernoulli ensemble conditioned on a particular vector $u$ having low discrepancy.
Though we lack a rigorous proof that this planted model is contiguous to our original model, we conjecture that the geometry of the solution space in the original model is well-captured by its planted counterpart.
It can be shown in this planted model that clusters of low-discrepancy solutions are isolated from each other, which provides heuristic evidence for the following conjecture, which we view as an attractive question for future work.
\begin{conjecture}\label{conj}
For $A$ drawn from the $(m, n, 1/2)$-Bernoulli ensemble with $n \geq C m \log m$, there is no efficient algorithm that finds a constant discrepancy solution with high probability.
\end{conjecture}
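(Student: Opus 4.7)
The plan is to attack \cref{conj} using the overlap gap property (OGP) framework of Gamarnik and collaborators, which converts a geometric obstruction in the solution space into hardness against broad classes of algorithms on random optimization problems. The excerpt's remark that, in the planted version of the model, clusters of low-discrepancy solutions are well-separated is precisely the geometric input this framework requires.

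The first step is to establish the relevant OGP. The most tractable setting is the planted model, where one samples $u^* \in \{\pm 1\}^n$ uniformly and then conditions the Bernoulli$(1/2)$ matrix $A$ on $\|A u^*\|_\infty \leq 1$. For fixed constants $r \geq 1$, $m \geq 2$, and $0 < a < b < 1$, the claim to prove is that with high probability there do not exist $m$-tuples $(u_1, \ldots, u_m)$ with $\|A u_i\|_\infty \leq r$ for all $i$ and $|\langle u_i, u_j \rangle|/n \in [a,b]$ for every $i \neq j$. The standard route is a first-moment calculation: enumerate over the pattern of pairwise overlaps and bound the joint probability $\PP{\|A u_1\|_\infty \leq r, \ldots, \|A u_m\|_\infty \leq r}$. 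The authors' conditional second-moment machinery already produces sharp estimates in the $m = 2$ case, and extending those estimates to general $m$ is the main technical work; the target is exponential-in-$n$ decay for any overlap vector lying in the forbidden region, uniformly over $m$-tuples in the bulk of the solution space.

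The second step is to convert the OGP into algorithmic hardness. The standard recipe is to consider stable (Lipschitz) algorithms, i.e.\ randomized maps $A \mapsto \widehat u(A)$ whose output changes continuously under local perturbations of $A$. One constructs an interpolation path of matrices between two independent samples, applies the algorithm along the path, and extracts an $m$-tuple of outputs whose pairwise overlaps necessarily traverse the forbidden interval $[a,b]$, contradicting validity or stability. An analogous argument via the low-degree likelihood ratio framework would rule out low-degree polynomial algorithms. Some care is needed with the sign symmetry $u \mapsto -u$, which forces one to work with $|\langle u_i, u_j \rangle|/n$ rather than the signed overlap.

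The hardest part will be bridging the planted and unplanted models. The excerpt explicitly flags contiguity of these two ensembles as unproven, and in sparse random CSPs contiguity is sometimes known to fail below the condensation threshold. The most direct workaround is to establish the OGP natively in the unplanted Bernoulli ensemble, applying the Stein/exchangeable-pairs technology of the paper to obtain precise control of joint probabilities for $m$-tuples of candidate solutions; this is a substantial extension of the $m=2$ analysis underlying \cref{main}, and the combinatorics of many overlap patterns may require new ideas. A final caveat is that OGP arguments rule out only restricted classes of algorithms (stable, low-degree, message-passing), whereas \cref{conj} as stated asserts hardness against \emph{any} efficient algorithm; obtaining unconditional hardness from an average-case assumption seems to lie well beyond current techniques, and a more realistic intermediate target is to prove \cref{conj} against these restricted algorithmic classes.
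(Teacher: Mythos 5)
The statement you are addressing is \cref{conj}, which the paper explicitly leaves as an \emph{open conjecture}: the authors offer only heuristic evidence (isolation of solution clusters in a planted model whose contiguity with the null model is itself unproven) and no proof. Your submission is likewise not a proof but a research outline, and it contains genuine, unfilled gaps at every stage. First, the overlap gap property you need is never actually established: you describe the required first-moment computation over $m$-tuples with constrained pairwise overlaps as ``the main technical work'' and then do not carry it out, in either the planted or the unplanted ensemble. The paper's second-moment machinery controls joint probabilities only for \emph{pairs} of solutions, and the extension to $m$-tuples with prescribed overlap patterns --- especially in the sparse, discrete setting where the authors needed Stein's method just to handle pairs --- is a substantial open problem, not a routine extension. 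Second, the transfer from the planted model (where cluster isolation is heuristically supported) to the null $(m,n,1/2)$-Bernoulli ensemble requires contiguity, which the paper explicitly states it cannot prove; your proposed workaround of establishing the OGP ``natively'' in the unplanted model is again only named, not executed.

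Most fundamentally, even if both steps were completed, the argument would not prove \cref{conj} as stated. OGP-based reductions rule out restricted algorithm classes --- stable algorithms, low-degree polynomials, certain message-passing schemes --- whereas the conjecture asserts that \emph{no} efficient algorithm succeeds. You acknowledge this yourself and propose retreating to a weaker intermediate statement, which concedes that the conjecture is not being proved. To be clear, your overall strategy (OGP plus stability/interpolation, with attention to the $u \mapsto -u$ symmetry forcing the use of $|\langle u_i,u_j\rangle|$) is the natural and currently most promising line of attack, and it is consistent with the heuristic picture the authors sketch. But as a proof of \cref{conj} it is incomplete at every step, and the statement remains open.
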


\begin{figure}
    \centering
  \begin{subfigure}[t]{.45\textwidth}
    \includegraphics[width=\textwidth]{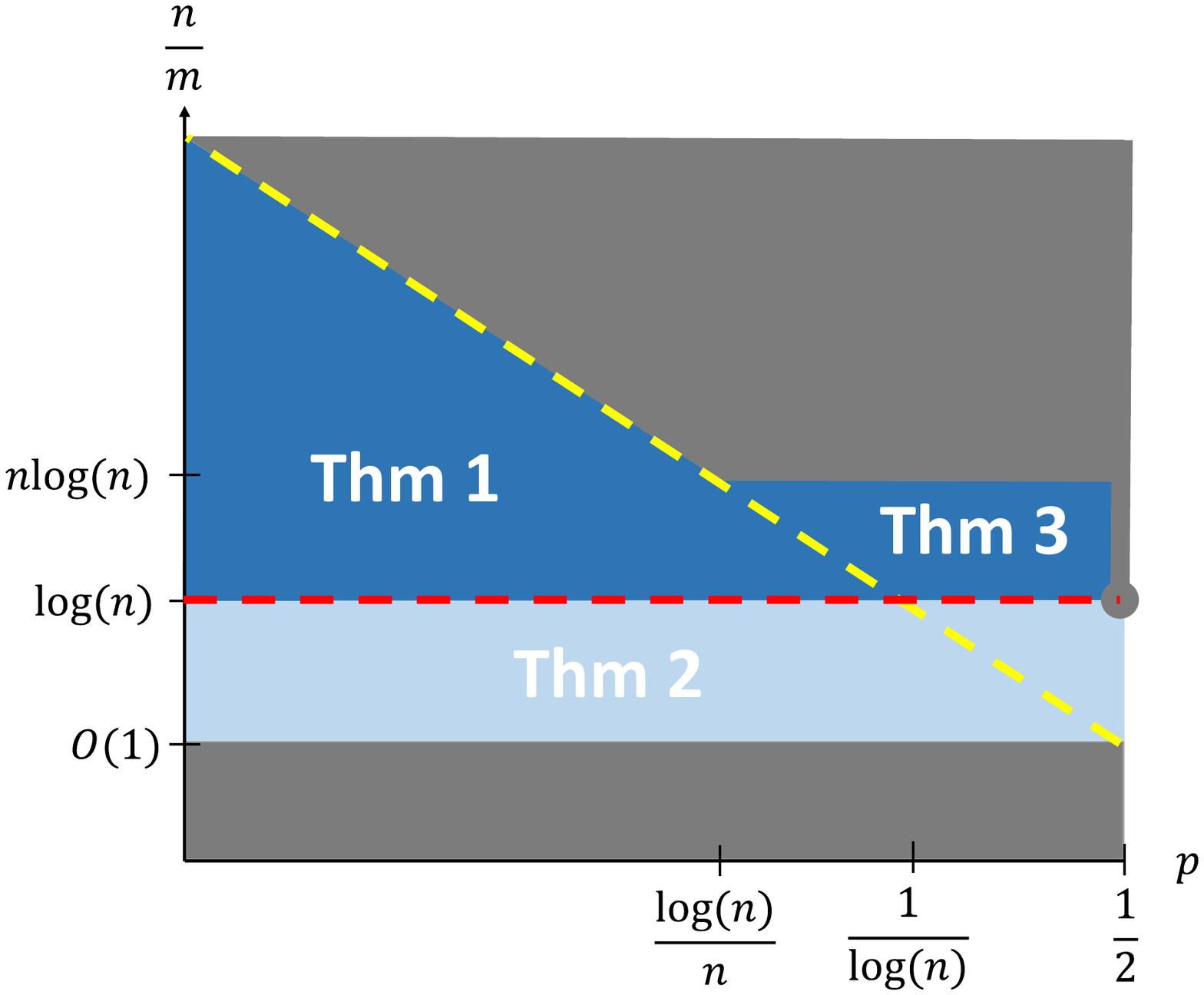}
  \end{subfigure}
  \hspace{.3cm}
  \begin{subfigure}[t]{.45\textwidth}
    \includegraphics[width=\textwidth]{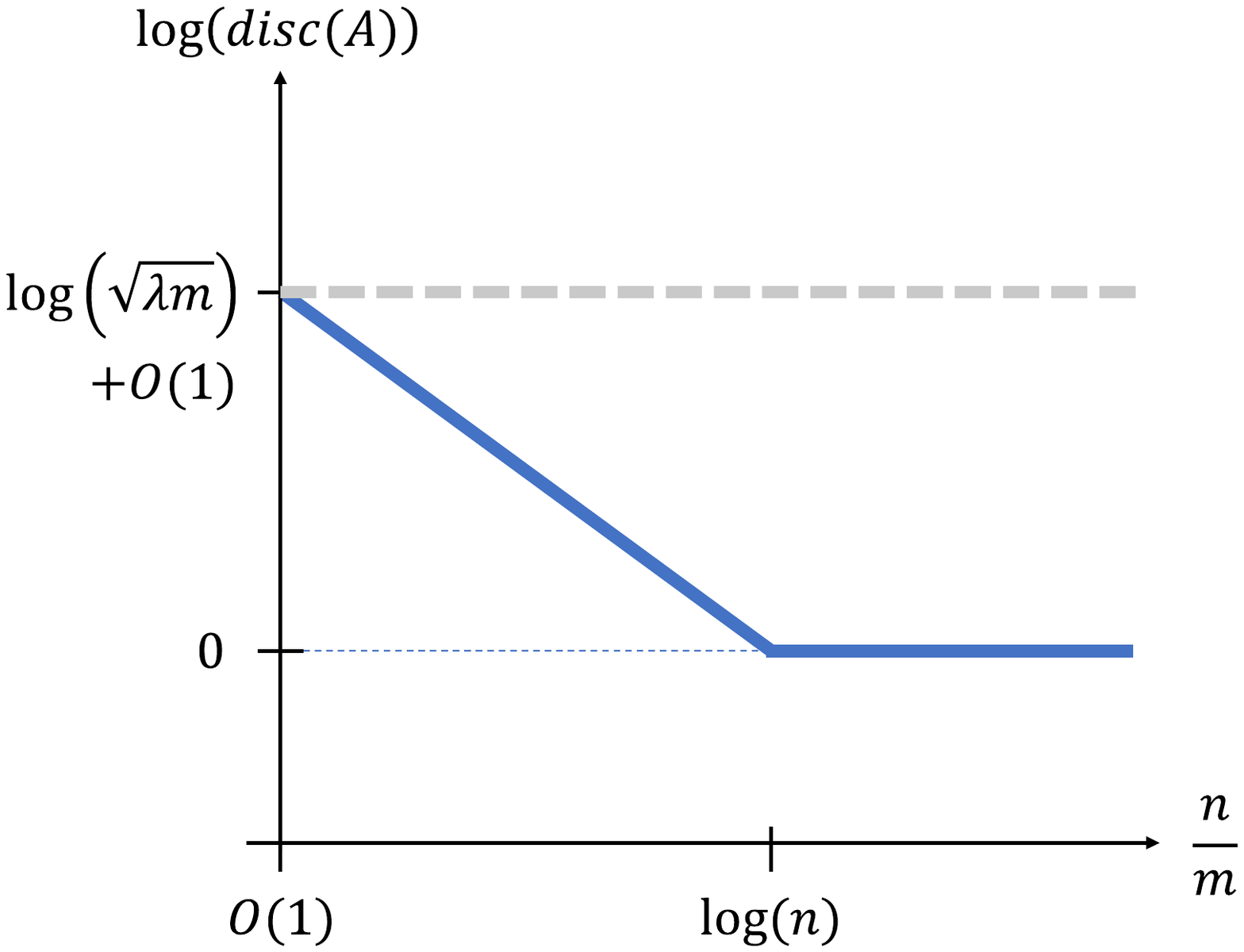}
  \end{subfigure}
  \caption{\textbf{Left}: a diagram of the state-of-the-art (grey) and our contributions (dark blue) for the discrepancy of Bernoulli random matrices. In the case of Poisson matrices (with $\lambda = p$), we contribute the light blue, as well as everything above the light blue. Above the red line, $\disc(A) = 1$ with high probability for either ensemble. Above the yellow line is the ``dense" regime. \textbf{Right}: the trade-off between aspect-ratio and discrepancy for the Poisson ensemble. The solid blue line is the sharp tradeoff for Poisson matrices given in \cref{fullcurve}. The solid grey line is conjectured to be sharp for deterministic $\lambda m$-sparse matrices (this is the ``Beck-Fiala conjecture"), and  is a known upper-bound  for random $\lambda m$-sparse matrices \cite{BanMek20}. Note the vertical axis is log-scaled.}
  \label{fig}
\end{figure}

\subsection{Previous Work}
A number of recent results study different random matrix models for which $\disc(A) = \cO{1}$ with high probability.
Ezra and Lovett \cite{ezra2019beck} consider a regular model in which a binary matrix $A$ is chosen uniformly at random conditioned on each column having exactly $t$ ones for some sparsity parameter $t$. They show when $n > m^t$, then $\disc(A) = \cO{1}$ with high probability. 

Two independent and concurrent works removed this exponential dependence on $t$. Franks and Saks \cite{franks2020discrepancy} consider a fairly general class of matrices and show $\disc(A) \le 2$ with high probability if $n = \Omega \left(m^3 \log^2 m\right)$. Simultaneously, Hoberg and Rothvo\ss~\cite{hoberg2019fourier} consider $A$ drawn from the $(m,n,p)$-Bernoulli Ensemble and give the improved bound that $\disc(A) \le 1$ with high probability if $n =\Omega \left(m^2 \log m\right)$ and $mp = \Omega(\log n)$. Shortly after, Potukuchi \cite{potukuchi2018discrepancy} improved this to $\disc(A) \le 1$ if $n = \Omega(m \log m)$ in the special symmetric case of $p = 1/2$. Both \cite{franks2020discrepancy} and \cite{hoberg2019fourier} used Fourier methods, while \cite{potukuchi2018discrepancy} used the second moment method. Around the same time as our paper, Macrury et al. \cite{macrury2021phase} showed $\disc(A) \ge \Omega(2^{-n/m}\sqrt{np})$ when $m/(np) \to 0$, for all $n = \Omega(m)$, via a first moment computation. This is analogous to our \cref{eq:fmm_heuristic}, but for a wider parameter range.

Bansal and Meka \cite{BanMek20} also improved upon \cite{ezra2019beck}, except with focus on the Beck-Fiala bound rather than constant discrepancy. They prove that, under a mild growth condition on $t$, random $m\times n$ binary matrices with $t$ ones per column have $\disc(A) \le \cO{\sqrt{t}}$ with high probability. We obtain similar results for Poisson matrices with average column weight $t$---See the discussion following \cref{fullcurve} comparing the tight rate for random Poisson matrices to the Beck-Fiala bound.

Importantly, all previous work on constant discrepancy on the Bernoulli ensemble requires
\begin{equation}\label{eq:mnp}
    \lim_{m \to \infty}\frac{m}{np} \to 0\,.
\end{equation}
We call this choice of parameters the {\em dense} regime.
There are {\em a priori} reasons to expect a non-trivial phase transition when $m/(np) \not\to 0$; as we show in~\cref{mnp}, this threshold is the point above which the number of optimal solutions to~\eqref{eq:discrepancy} no longer concentrates sufficiently well around its expectation.
Similar phenomena appear in the analysis of random graphs, whose behavior is very different in the sparse case.
Our main technical challenge is proving constant-discrepancy results in the regime where~\eqref{eq:mnp} does not hold. \\

A separate line of research has focused on the case where $A$ is a random matrix with independent Gaussian entries~\cite{turner2020balancing, aubin2019storage,chandrasekaran2011discrepancy}, showing that $\disc(A) \asymp 2^{-n/m} \sqrt n $ when $n = \Omega(m)$; in particular, that $\disc(A) = \cO{1}$ once $n \geq C m \log m$ for $C > \frac{1}{2 \log 2}$.
Discrepancy specialized to the case of iid Gaussian entries and $n = \cT{m}$ can be seen as a symmetrized version of a famous model in statistical physics known as the binary perceptron model~\cite{aubin2019storage, talagrand1999intersecting}, the rigorous understanding of which is an area of active research~\cite{ding2019capacity}. In independent and concurrent works, Perkins and Xu \cite{perkins2021frozen} and Abbe, Li, and Sly \cite{abbe2021proof} establish the ``frozen 1-RSB" geometry of typical solutions in this setting. Further, \cite{perkins2021frozen} shows exponential concentration of the number of solutions, while \cite{abbe2021proof} gives an explicit description of the asymptotic distribution of the number of solutions as well as a proof of the ``contiguity conjecture" (namely that the planted model and null model are contiguous---see \cite{aubin2019storage,abbe2021proof} for relevant definitions and discussion).

In the Gaussian case---and more generally, for distributions with sufficiently smooth densities---optimal bounds on the discrepancy can be achieved by a direct application of the second moment method.
However, in the sparse, discrete ensembles we consider, the situation is considerably more delicate, and this approach fails.
Nevertheless, our results validate the view that, despite being significantly less well behaved than Gaussian matrices, matrices with Bernoulli or Poisson entries also have small discrepancy as soon as $n \geq C m \log m$.

For square or close-to-square matrices, a variety of efficient algorithms have been discovered matching Spencer's and Banaszczyk's bounds \cite{bansal2019algorithm, eldan2018efficient, rothvoss2017constructive, bansal2018gram, nikolov2014approximating}.
In the Beck--Fiala setting, Potukuchi~\cite{potukuchi2018discrepancy,potukuchi2019spectral} gives an efficient algorithm achieving $O(\sqrt{t})$ discrepancy for random matrices with $t$-sparse columns for any $t := t(n)$, as long as $m \ge n$. However, these approaches do not appear to extend to the constant-discrepancy regime when $m = \oo(n)$.
As Hoberg and Rothvo\ss~\cite{hoberg2019fourier} note, the lack of efficient algorithms for this regime is a common feature of combinatorial problems for which solutions are shown to exist by probabilistic means~\cite{KupLovPel12}.

The use of the second-moment method for random constraint satisfaction problems was popularized by Achlioptas and Moore~\cite{achlioptas2002asymptotic} and Frieze and Wormald~\cite{FriWor05}.
It has since been successfully applied to a diverse set of problems in theoretical computer science and combinatorics~\cite{achlioptas2006random,achlioptas2003threshold,AchNaoPer07,AchNao05}.

\subsection{Heuristics from the first and second moment}
Let us first give a heuristic justification for the fact that constant discrepancy is achievable once $n \geq C m \log m$ for $C$ large enough.
Denote by $\mathcal B$ the set of balanced vectors and let $Z= Z_1$ be the number of $u \in \mathcal B$ for which $\|A u \|_{\infty} \leq 1$.
If $Z > 0$, then $\disc(A) \leq 1$. 

Since each of the vectors in $\mathcal B$ has an equal probability of satisfying this requirement, fix some $u \in \mathcal B$. Then,
\begin{equation*}
\EE Z = \sum_{u \in \mathcal B} \PP{\|A u \|_{\infty} \leq 1} = \binom{n}{n/2} \PP{\|A u\|_\infty \leq 1}\,.
\end{equation*}
The entries of $A u$ are independent, and each is a sum of $n$ independent random variables with variance $p(1-p)$, so the local central limit theorem suggests that
\begin{equation*}
\PP{\|A u\|_{\infty} \leq 1} = \left(\frac{3}{\sqrt{2 \pi n p(1-p)}}\mo{1}\originalright)^m \,.
\end{equation*}
If $n \gg m$, we therefore expect that
\begin{equation}\label{eq:fmm_heuristic}
\EE Z \approx \exp\left(n \log 2 - \frac m2 \log np + \oo(n)\right)
\end{equation}
So long as $n \geq (1+\varepsilon) \frac{m}{2 \log 2} \log m$, this quantity is exponentially large.
In expectation, therefore, $n \geq C m \log m$ is the right scaling.

Though the annealed entropy $\log \E Z$ predicts a threshold at $n \asymp m \log m$, showing that $Z$ is indeed large with high probability past this threshold requires controlling the fluctuations of $Z$.
The classic approach is the so-called \emph{second-moment method}, based on the Paley--Zygmund inequality, which says that a nonnegative integer-valued random variable is positive with high probability as long as $\EE[Z^2] = \mo{1}\EE[Z]^2$ and $\EE[Z] > 0$.
There is a slight obstruction to naively applying the second moment method to show that the discrepancy of a random matrix is at most $1$: the second moment is skewed by the fact that conditioning on $(Au)_i \in \{-1, 0, 1\}$ biases the parity of the $i$th row of $A$, because there is only one even number in this set. There are two workarounds: one can ask for $(Au)_i \in \{-2,-1,1,2\}$ and prove that $\disc(A) \leq 2$. Or, one can condition on the event that each row of $A$ has even parity, and ask that $Au = 0$.
We adopt the second approach and use the following basic construction to extend our result to the unconditioned case.
\begin{lemma}\label{parity_coupling}
Let $A$ be from either the Bernoulli or Poisson ensemble, and let $A'$ be from the same ensemble conditioned on the sum of the entries in each row being even.
There exists a coupling of $A$ and $A'$ such that
\begin{equation*}
\disc(A) \leq \disc(A') + 1 \quad \text{a.s.}
\end{equation*}
\end{lemma}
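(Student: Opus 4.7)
The plan is to construct, independently for each row, a coupling of a row $R$ drawn from the unconditioned ensemble and a row $R'$ drawn from the same ensemble conditioned on having an even sum, such that $R$ and $R'$ differ in at most one coordinate, by at most $\pm 1$. Assembling these $m$ independent row couplings produces a joint law of $(A, A')$ under which $A$ and $A'$ agree except for at most one entry per row, each altered by at most $1$. Then for any $u \in \{-1,+1\}^n$ and any row $i$, the single differing entry contributes $\pm u_j \in \{-1, 0, +1\}$ to $(Au)_i - (A'u)_i$, so $\|Au\|_\infty \le \|A'u\|_\infty + 1$. Choosing $u^* \in \arg\min_u \|A'u\|_\infty$ then yields $\disc(A) \le \|Au^*\|_\infty \le \disc(A') + 1$ almost surely.

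For the Poisson ensemble, the row coupling admits a clean Poisson-process construction. I sample a total count $N \sim \Pois(n\lambda)$ and drop the $N$ points independently and uniformly into the $n$ coordinates to form $R$. I couple $N$ with $N'$, the same Poisson conditioned on being even, so that $|N - N'| \le 1$ almost surely, by a triangular recursion: conditional on odd $N = k$, I move to $N' = k - 1$ with some probability $\beta_k$ and to $N' = k + 1$ with probability $1 - \beta_k$, where each $\beta_k$ is pinned down by matching the target marginal one level at a time. The $\beta_k$'s lie in $[0,1]$ thanks to the short verification that $k/(n\lambda) + n\lambda/(k+1) \ge \coth(n\lambda)$ for every odd $k \ge 1$, which is the Hall-feasibility condition on singletons for this transport. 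Given $(N, N')$, I use the same uniform point placement for the $\min(N, N')$ shared points and route the at-most-one extra point into a single uniformly chosen coordinate, producing $R$ and $R'$ that differ at most at one coordinate by $\pm 1$.

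The Bernoulli case follows the same blueprint but is more delicate because entries are confined to $\{0,1\}$ and cannot be freely incremented or decremented. Here I seek a coupling of $R \sim \text{Bern}(p)^n$ with $R'$, the same product conditioned on even sum, at Hamming distance at most $1$. By Strassen's theorem --- equivalently, max-flow / min-cut on the bipartite graph that pairs odd- and even-sum binary vectors via Hamming-$1$ edges --- such a coupling exists iff a Hall-type inequality holds: for every set $S$ of odd-sum vectors, the odd-sum-conditional measure of $S$ is at most the even-sum-conditional measure of its Hamming-$1$ neighborhood. I expect verifying this Hall condition for all $S$ and all $p \in (0,1)$ to be the main technical obstacle; it is tight at $p = 1/2$ by the regularity of the cube, and for general $p$ it can be established by a weighted discrete-isoperimetric argument on the hypercube, or equivalently by the explicit construction of a randomized flip rule that transports the odd-sum-conditional measure to the even-sum-conditional measure. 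Once the Hall condition is in hand, the per-row coupling lifts to independent rows, and the first paragraph delivers the claimed bound on $\disc(A)$.
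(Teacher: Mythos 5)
Your reduction (per-row couplings at Hamming/$\ell_1$ distance one, assembled independently, giving $\|Au\|_\infty \le \|A'u\|_\infty+1$ for every $u$ and hence $\disc(A)\le \disc(A')+1$) is exactly right, and your Poisson construction is essentially the paper's: couple the row total $N\sim \Pois(n\lambda)$ with its even-conditioned version at distance at most $1$, then reuse the uniform placement of the shared points. Note, though, that even there the feasibility of your triangular recursion (all $\beta_k\in[0,1]$) is asserted via an unproved inequality; the paper proves the analogous fact by exploiting log-concavity of the target pmf, which shows the recursively defined transport masses change monotonicity at most once and hence stay in the admissible range.

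The genuine gap is the Bernoulli case, which you explicitly leave open. Verifying Strassen/Hall for \emph{all} subsets of odd-weight vertices of the weighted hypercube is a real isoperimetric problem, and "a weighted discrete-isoperimetric argument or an explicit flip rule" is precisely the content of the lemma, not a proof of it. The idea you are missing is to condition on the row sum, exactly as you did for Poisson: given that a Bernoulli row has sum $k$, it is uniform on the weight-$k$ slice of $\{0,1\}^n$, and the uniform measures on adjacent slices are coupled at Hamming distance $1$ by flipping a uniformly random one to a zero (or vice versa). This collapses the hypercube Hall condition to a one-dimensional coupling of $X\sim\Bin(n,p)$ with its even-conditioned version satisfying $|X-X'|\le 1$ --- structurally the same problem as your Poisson count coupling. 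The paper solves that one-dimensional problem once, with an explicit Pinelis-type construction whose nonnegativity constraints are checked simultaneously for the Binomial and Poisson laws using only their log-concavity. With that reduction in hand, your first paragraph completes the argument; without it, the Bernoulli half of the lemma is unproven.
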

The proof of \cref{parity_coupling} appears in \cref{lemmas}.
This conditioning approach was employed by Potukuchi~\cite{potukuchi2018discrepancy} to establish  a version of \cref{main} via the second moment method when $p = 1/2$. We show that this argument can be extended to prove constant discrepancy so long as the matrix satisfies the density requirement~\eqref{eq:mnp}. Moreover, the following theorem shows that, in fact, the density requirement is necessary, and this application of the second moment method provably fails when the matrix is too sparse.
Recall $Z_r$ is the number of balanced vectors $u \in \{-1,1\}^n$ with $\|Au\|_{\infty} \le r$.
\begin{theorem}[Dense regime]\label{mnp}
    Let $A$ be drawn from the $(m,n,p)$-Bernoulli Ensemble. Define $P$ as the event that each row of $A$ sums to an even number. There exists a universal constant $C > 0$ such that for any $n \geq C m \log m$ and $np = \omega(1)$,
    \begin{equation*}
        \frac{\E{Z_0^2|P}}{\E{Z_0|P}^2} = 
        \begin{cases}
            1 + \oo(1), &\quad m = \oo(np) \\
            \cE{\cOm{\frac{m}{np}}}, &\quad m = \Omega(np)
        \end{cases}
    \end{equation*}
\end{theorem}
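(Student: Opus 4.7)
The plan is to compute the conditional second-moment ratio by carrying the local CLT for $p_k \defeq \PP{\Bin(k,p) = \Bin(k,p)}$ one order past the Gaussian, and to show this correction alone drives the stated threshold. Since $\{Au=0\}$ forces each row of $A$ to have even sum, the random variable $Z_0$ is supported on $P$, so $\E{Z_0^k \mid P} = \E{Z_0^k}/\PP{P}$ for $k = 1,2$, giving
\begin{equation*}
\frac{\E{Z_0^2 \mid P}}{\E{Z_0 \mid P}^2} = \PP{P}\cdot\frac{\E{Z_0^2}}{\E{Z_0}^2}, \qquad \PP{P} = 2^{-m}(1 + (1-2p)^n)^m.
\end{equation*}
By column symmetry, for balanced $u,v$ the probability $\PP{Au=Av=0}$ depends only on the overlap $a \defeq |\{i : u_i = v_i = 1\}|$; factoring the event row-by-row into two independent binomial coincidences yields $\PP{Au=Av=0} = (p_a\, p_{n/2-a})^m$ and $\PP{Au = 0} = p_{n/2}^m$. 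Since the number of balanced pairs with fixed overlap is $\binom{n}{n/2}\binom{n/2}{a}^2$,
\begin{equation*}
\frac{\E{Z_0^2}}{\E{Z_0}^2} = \binom{n}{n/2}^{-1}\sum_{a=0}^{n/2}\binom{n/2}{a}^2\left(\frac{p_a\, p_{n/2-a}}{p_{n/2}^2}\right)^m.
\end{equation*}

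The central step is a second-order expansion of $p_k$, which I would derive via the Fourier identity $p_k = (2\pi)^{-1}\int_{-\pi}^{\pi}(1 - 2p(1-p)(1-\cos t))^k\, dt$ and Laplace's method:
\begin{equation*}
p_k = \frac{1}{2\sqrt{\pi k p(1-p)}}\left(1 + \frac{1 - 6p(1-p)}{16\, k p(1-p)} + \cO{\tfrac{1}{(kp(1-p))^2}}\right).
\end{equation*}
Writing $\rho \defeq (4a-n)/n$ for the scaled overlap, substitution gives
\begin{equation*}
\frac{p_a\, p_{n/2-a}}{p_{n/2}^2} = \frac{2}{\sqrt{1-\rho^2}}\,\cE{\frac{(1-6p(1-p))(1+\rho^2)}{4 n p(1-p)(1-\rho^2)} + \cO{\tfrac{1}{(np)^2}}}.
\end{equation*}
Combining with the local CLT estimate $\binom{n/2}{a}^2/\binom{n}{n/2} \asymp n^{-1/2}\cE{-n\rho^2/2}$ on the bulk $|\rho| \leq \eps$, the factor $2^m$ from the overlap ratio cancels the $2^{-m}$ in $\PP{P}$ (leaving the extra factor $(1 + (1-2p)^n)^m$, which is $\geq 1$ always and $\mo{1}$ whenever $np = \omega(\log m)$), and the bulk contribution reduces to
\begin{equation*}
\sqrt{\tfrac{n}{2\pi}}\int_{|\rho|\leq\eps}\cE{-\tfrac{(n-m)\rho^2}{2} + \tfrac{m(1-6p(1-p))}{4np(1-p)} + \cO{m\rho^4} + \cO{\tfrac{m}{(np)^2}}}d\rho.
\end{equation*}

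Evaluating the Gaussian in $\rho$ gives a prefactor $\sqrt{n/(n-m)} = \mo{1}$ since $m = \oo(n)$, and leaves the dominant factor $\cE{m(1-6p(1-p))/(4np(1-p))}$. When $m = \oo(np)$ this exponent is $\oo(1)$, yielding ratio $= 1 + \oo(1)$. When $m = \cOm{np}$, the hypothesis $n \geq Cm\log m$ forces $p = \cO{1/\log m} = \oo(1)$, so $1-6p(1-p) \geq 1/2$ for $n$ large, and the correction equals $\cE{\cOm{m/(np)}}$. The principal obstacle is controlling the tail $|\rho|>\eps$, where the local CLT breaks down: I would handle it via the entropy bound $\binom{n/2}{a}^2/\binom{n}{n/2} \leq \cE{-\cOm{n}}$ uniformly on the tail, combined with the crude inequality $p_a\, p_{n/2-a}/p_{n/2}^2 \leq \cO{np}$ (which requires no Edgeworth control), giving a tail contribution bounded by $\cE{-\cOm{n} + \cO{m\log(np)}} = \oo(1)$ under $n \geq Cm\log m$ and $np \leq \mathrm{poly}(n)$. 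A secondary technicality is uniformity of the Edgeworth estimate in $p = p(n)$, which follows from standard smooth local CLT bounds for triangular arrays whenever $np \to \infty$.
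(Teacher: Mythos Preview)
Your proposal is correct and follows essentially the same route as the paper. Both arguments reduce to the second-order Edgeworth correction for the return probability of the lazy walk $R(k,p)$ (your $p_k$ is exactly the paper's $\PP{V=0}$ for $V\sim R(k,p)$, and your correction term $(1-6p(1-p))/(16kp(1-p))$ matches the paper's $(\sigma^{-2}-3)/(8k)$ with $\sigma^2 = 2p(1-p)$), then apply Laplace's method to the overlap sum. The only differences are organizational: the paper works throughout with the conditional quantities $\psi,\phi$ and packages the Laplace step into an abstract lemma (its \cref{2mm}) with separate ``weak'' and ``strong'' overlap regions, whereas you compute the unconditional ratio directly, multiply by $\PP{P}$ at the end, and run the Laplace integral by hand; and you derive the Edgeworth term via the Fourier integral while the paper cites a general lattice Edgeworth expansion. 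Your observation that $m=\oo(np)$ automatically forces $np=\omega(\log m)$ (so the residual factor $(1+(1-2p)^n)^m$ is $1+\oo(1)$ in the upper-bound case and trivially $\geq 1$ in the lower-bound case) is exactly what is needed, and your tail bound via $p_a p_{n/2-a}/p_{n/2}^2 \leq \cO{np}$ together with the entropy deficit is the same mechanism the paper's weak bound provides.
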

\Cref{parity_coupling} immediately yields the following corollary.
\begin{corollary}
Let $A$ be drawn from the $(m,n,p)$-Bernoulli ensemble with $n \geq Cm\log m$ and $m = \oo(np)$. With high probability, $\disc(A) = 1$.
\end{corollary}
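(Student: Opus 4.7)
The plan is to combine \cref{mnp} with \cref{parity_coupling} via the Paley--Zygmund inequality, and to supplement this with a brief parity argument for the matching lower bound. I assume throughout the asymptotic regime $m \to \infty$, $n \geq C m \log m$, and $m = \oo(np)$ (which in particular forces $np \to \infty$).

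For the upper bound $\disc(A) \leq 1$, let $A'$ be a matrix drawn from the $(m,n,p)$-Bernoulli ensemble conditioned on the event $P$ that every row sum is even. In this regime, \cref{mnp} gives $\E{Z_0^2 \mid P} = \mo{1}\, \E{Z_0 \mid P}^2$, so the Paley--Zygmund inequality yields
\begin{equation*}
\PP{Z_0 > 0 \mid P} \;\geq\; \frac{\E{Z_0 \mid P}^2}{\E{Z_0^2 \mid P}} \;=\; 1 - \oo(1).
\end{equation*}
On the event $\{Z_0 > 0\}$ there exists a balanced $u \in \{-1,+1\}^n$ with $A' u = 0$, i.e.\ $\disc(A') = 0$. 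Applying the coupling of \cref{parity_coupling} to $A$ and $A'$ then produces $\disc(A) \leq \disc(A') + 1 = 1$ with high probability.

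For the matching lower bound $\disc(A) \geq 1$, observe that for any $u \in \{-1,+1\}^n$, flipping a coordinate of $u$ changes $(A u)_i = \sum_j A_{ij} u_j$ by an even integer, so $(A u)_i$ always has the same parity as the row sum $\sum_j A_{ij}$. In particular, if any row of $A$ has an odd sum, then $|(Au)_i| \geq 1$ for every $u \in \{-1,+1\}^n$, forcing $\disc(A) \geq 1$. The elementary identity $\PP{\Bin(n,p) \text{ even}} = \tfrac{1}{2}\bigl(1 + (1-2p)^n\bigr)$ shows that each row has even sum with probability $\tfrac{1}{2} + \oo(1)$ since $np \to \infty$, independently across rows; thus with $m \to \infty$ rows, some row has an odd sum with probability $1 - \oo(1)$. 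Combining the upper and lower bounds gives $\disc(A) = 1$ with high probability.

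The substantive technical work is already contained in \cref{mnp}, which requires a careful conditional second moment calculation in the dense regime $m = \oo(np)$. Once that estimate is in hand, the deduction of the corollary is essentially a one-step Paley--Zygmund application together with the parity coupling of \cref{parity_coupling}, so the main obstacle is really \cref{mnp} itself rather than the corollary.
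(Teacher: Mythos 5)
Your proof is correct and follows essentially the same route as the paper: \cref{mnp} plus Paley--Zygmund gives $\disc(A')=0$ with high probability for the parity-conditioned matrix, and the coupling of \cref{parity_coupling} transfers this to $\disc(A)\le 1$. You also spell out the lower bound $\disc(A)\ge 1$ via the odd-row-sum parity argument, which the paper treats as immediate (it is noted only in the introduction); your version of that step is also correct.
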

The constant $C$ is the same constant that appears in \cref{main}, and this suffices to establish \cref{main} in the dense regime.
However, \cref{mnp} also shows that this strategy fails when $A$ is sparse.
Nevertheless, \cref{main} maintains that the prediction implied by~\eqref{eq:fmm_heuristic} is correct even when the second-moment method fails.

\subsection{Our techniques}
To prove \cref{main} in the case where~\eqref{eq:mnp} does not hold, we employ two strategies.
The failure of the second-moment calculation in~\cref{mnp} stems from the fact that, when $m/np \not\to 0$, the second moment $\EE[Z^2]$ is too sensitive to the sum of the entries in each row of $A$ when $A$ is sparse. We therefore carry out the second-moment method conditional on the weights of each row of $A$.
This technique is common in the literature, and bounding these conditional second-moments still suffices to show that $Z > 0$ with high probability~\cite{Jan96}.

However, even after conditioning, bounding the second moment requires significant care.
For notational simplicity, let us ignore the conditioning argument for now and consider the random variable $Z = Z_1$ counting the $u \in \mathcal{B}$ with $\|Au\|_{\infty} \le 1$, as before.
Since the entries of $Au$ are i.i.d., we obtain 
\begin{align*}
\EE[Z^2] & = \sum_{u, v \in \mathcal{B}} \PP{\|A u\|_\infty \leq 1, \|A v\|_\infty \leq 1} \\
& = \sum_{u, v \in \mathcal{B}} (\PP{|(A u)_1| \leq 1, |(A v)_1| \leq 1})^m \\
& = \sum_{u, v \in \mathcal{B}} \PP{|(A u)_1| \leq 1 \big | |(A v)_1| \leq 1}^m \cdot \PP{|(A v)_1| \leq 1}^m\,,
\end{align*}
and likewise,
\begin{equation*}
\EE[Z]^2 = \sum_{u, v \in \mathcal{B}} \PP{|(A u)_1| \leq 1}^m \cdot \PP{|(A v)_1| \leq 1}^m\,.
\end{equation*}
To show that $\EE[Z^2] = \mo{1} \EE[Z]^2$, we need to show that for a typical pair $u, v \in \mathcal{B}$, the events $\{|(A u)_1| \leq 1\}$ and $\{|(A v)_1| \leq 1\}$ are approximately independent, so that
\begin{equation*}
\PP{|(A u)_1| \leq 1 \big| |(A v)_1| \leq 1}^m  \approx \PP{|(A u)_1| \leq 1}^m
\end{equation*}
Proving this fact requires approximations on $\PP{|(A u)_1| \leq 1 \big| |(A v)_1| \leq 1}$ which are accurate to $1 + \oo(m^{-1})$.
However, calculating $\PP{|(A u)_1| \leq 1 \big| |(A v)_1| \leq 1}$ explicitly is infeasible; moreover, the local central limit theorem and other classical approximation techniques yield estimates which are accurate only up to a multiplicative factor of  $1 + \cO{\frac 1 {np}}$.
When $m/np \not \to 0$, these errors are unacceptably large.

Our second strategy bypasses this difficulty by employing Stein's method.
Though this method is well known in the probability literature for its utility in proving limit theorems, to our knowledge the use of this technique combined with the second-moment method is novel.
To evaluate $\PP{|(A u)_1| \leq 1 \big| |(A v)_1| \leq 1}^m$, we construct a pair of Markov chains, one of which has stationary distribution given by the law of $(Au)_1$ conditioned on the event $\{|(A v)_1| \leq 1\}$, and the other of which has stationary distribution given by the law of $(Au)_1$ \emph{without conditioning}.
Stein's method gives a means for comparing these two stationary distributions by inverting a particular functional equation involving the generators of these two chains, which allows us to approximate $\PP{|(A u)_1| \leq 1 \big| |(A v)_1| \leq 1}$ by a simpler, unconditional probability.
The resulting approximation has much smaller errors---of order $1 + \cO{\frac 1 {n}}$---and this improvement is crucial to obtaining accurate bounds when~\eqref{eq:mnp} fails.

\subsection{Notation}
The asymptotic notation $\oo(\cdot)$, $\cO{\cdot}$, $\cOm{\cdot}$, and $\cT{\cdot}$ refers to the $m \to \infty$ and therefore $n \to \infty$ limit.
Given a sequence $a = a(m)$ and a nonnegative sequence $b = b(m)$ depending on $m$, we write $a = \cO{b}$ or $b = \cT{a}$ if $|a(m)| \leq C b(m)$ for all $m$ sufficiently large
We write $a = \oo(b)$ if $\lim_{m \to \infty} \frac{|a(m)|}{b(m)} = 0$.
Unless otherwise specified, the implicit constants in these expressions are universal.
The phrase ``with high probability'' means that a sequence of events occurs with probability $1 - \oo(1)$ in this asymptotic limit. The symbols $\wedge$ and $\vee$ denote min and max respectively.
The symbol $\log$ denotes the logarithm base $\e$.
We define the binary entropy function $H$ by
\begin{equation*}
H(p) \defeq p \log \frac 1p + (1-p) \log \frac{1}{1-p}\,.
\end{equation*}

\subsection{Organization of the remainder of the paper}
In \cref{sec:smm}, we formalize the version of the second-moment method that we will employ, and show how to derive \cref{mnp}.
\Cref{sec:stein} introduces Stein's method, and gives the proofs of our central approximation results.
The appendices contain additional technical proofs and lemmas.

\section{Second Moment Method}\label{sec:smm}
The crux of our argument is the second-moment method.
Our approach requires two pieces.
The first, standard step consists in applying the second-moment method conditionally to ensure that the second moment is not dominated by rare events.
We use the following variant of the Paley--Zygmund inequality:
\begin{lemma}[{Conditional Paley--Zygmund~\cite[Theorem 2.1]{Jan96}}]\label{pz}
Let $Z = Z(n)$ be a sequence of nonnegative, integer-valued random variables, and let $W$ be another random variable on the same probability space.
If~$\PP{\EE[Z \mid W] = 0} \to 0$ and
\begin{equation*}
\frac{\EE[Z^2 \mid W]}{\EE[Z \mid W]^2} \overset{p}{\longrightarrow} 1\,,
\end{equation*}
then $Z > 0$ with high probability.
\end{lemma}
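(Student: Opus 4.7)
The plan is to derive this conditional version from the classical Paley--Zygmund inequality applied pointwise in $W$, and then use a bounded-convergence argument to lift the estimate from conditional to unconditional probability.

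First I would recall the standard Paley--Zygmund inequality: for any nonnegative random variable $Y$ with $\EE Y > 0$, one has $\PP{Y > 0} \geq (\EE Y)^2 / \EE[Y^2]$. Applying this to the conditional distribution of $Z$ given $W$, on the event $\{\EE[Z \mid W] > 0\}$ we obtain
\begin{equation*}
\PP{Z > 0 \mid W} \;\geq\; \frac{(\EE[Z \mid W])^2}{\EE[Z^2 \mid W]} \;=\; \frac{1}{R}\,,
\end{equation*}
where $R \defeq \EE[Z^2 \mid W] / (\EE[Z \mid W])^2$ is defined on this event and (say) set to $1$ otherwise. Note that $R \geq 1$ almost surely by the conditional Jensen inequality, so $1/R$ takes values in $[0,1]$.

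Next I would take expectations over $W$ and use the tower property:
\begin{equation*}
\PP{Z > 0} \;=\; \EE\!\left[\PP{Z > 0 \mid W}\originalright] \;\geq\; \EE\!\left[\frac{1}{R} \cdot \ind_{\{\EE[Z \mid W] > 0\}}\originalright].
\end{equation*}
By hypothesis, $R \overset{p}{\to} 1$, so $1/R \overset{p}{\to} 1$, and $\ind_{\{\EE[Z \mid W] > 0\}} \overset{p}{\to} 1$ because $\PP{\EE[Z \mid W] = 0} \to 0$. The product of these two quantities is bounded above by $1$ and converges to $1$ in probability, so by the bounded convergence theorem (in its convergence-in-probability form) the expectation tends to $1$. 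This yields $\PP{Z > 0} \to 1$, which is the claim.

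The only delicate point is the book-keeping around the event $\{\EE[Z \mid W] = 0\}$, on which the ratio $R$ is ill-defined; this is handled cleanly by assigning $R = 1$ there and inserting the indicator, after which the hypothesis $\PP{\EE[Z \mid W] = 0} \to 0$ guarantees this contribution is negligible. No other step should present a genuine obstacle, since the argument is essentially the standard second-moment method with an extra outer expectation and a routine uniform-integrability (here, boundedness) argument.
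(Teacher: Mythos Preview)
Your argument is correct: the conditional Paley--Zygmund inequality together with bounded convergence is exactly the standard route. Note, however, that the paper does not prove this lemma at all---it is quoted directly from \cite[Theorem~2.1]{Jan96} as a known tool---so there is no ``paper's proof'' to compare against. Your write-up is essentially the textbook derivation one would expect, and the handling of the null event $\{\EE[Z\mid W]=0\}$ via the indicator is the right bookkeeping.
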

The second step consists of accurately computing the second moment of the conditional distribution, which is the main challenge in our setting.
We give a version of the second moment method (similar to Lemma 3 of \cite{achlioptas2002asymptotic}) tailored for general random constraint satisfaction problems that highlights this aspect. Say a matrix has exchangeable columns if its distribution is invariant under permutations of the columns.

\begin{lemma}[Second Moment Method for Rectangular CSPs]\label{2mm}
Let $\M$ be an ensemble of $m \times n$ matrices with independent rows, exchangeable columns, and $m = \oo(n)$. Let $A\sim \M$ and fix sets $\K_i \subseteq \mathbb Z$ for $i \in [m]$.

Define $G_{i} := \{u \in \mathcal B: \la u, A_i \ra \in \K_i \}$, and let $Z := |\bigcap_iG_{i}|$ be the number of elements of $\mathcal B$ whose inner product with the $i$th row of $A$ lies in $\K_i$ for all $i \in [m]$. For an arbitrary pair of balanced vectors $u$ and $v$ which agree on $\beta n$ coordinates, denote
\[
    \psi_{i} = \PP[A\sim \M]{u \in G_{i}}, \quad \phi_{i}(\beta) =\PP[A\sim \M]{u \in G_{i},~ v \in G_{i}}
\]
Suppose that the following conditions hold for $n$ sufficiently large.
\begin{itemize}
    \item (First Moment:)
    There exists a positive constant $c$ such that
    \begin{equation}\label{eq:2mm-goals-expectation}
        \log\E{Z} > c n
    \end{equation}
    \item (Weak Bound:) For any $\delta \in (0, 1/2)$, there exists a positive constant $C_\delta$ such that
    \begin{equation}\label{eq:2mm-goals-weak}
        \phi_i\left(\beta\right) \leq C_\delta \psi_i^2 \quad \forall i \in [m],\, \forall \beta \in [\delta, 1 - \delta]
    \end{equation}
    \item (Strong Bound:) There exists positive universal constants $C$ and $\eps$ such that
    \begin{equation}\label{eq:2mm-goals}
        \phi_i\left(\frac{1}{2}+x\right) \le \mo{\frac{1}{m}} \left(1 + Cx^2\right) \psi_i^2 \quad \forall i \in [m]\,, \forall |x| < \eps
    \end{equation}
\end{itemize}
Then the second moment method succeeds: $\E{Z^2} = (1 + \oo(1))\E{Z}^2$.
\end{lemma}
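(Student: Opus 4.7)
The plan is to evaluate $\E{Z^2}/\E{Z}^2$ directly. By independence of rows and exchangeability of columns, for any balanced pair $u, v$ the event $\{u, v \in \bigcap_i G_i\}$ factorizes across rows into factors $\phi_i$ depending only on the overlap $\beta(u,v) = |\{j : u_j = v_j\}|/n$. Fixing some $u_0 \in \mathcal{B}$ and letting $v$ be uniform on $\mathcal{B}$, this yields
\begin{equation*}
    \frac{\E{Z^2}}{\E{Z}^2} = \E[v]{\prod_{i=1}^m \frac{\phi_i(\beta(u_0, v))}{\psi_i^2}}.
\end{equation*}
Under the uniform law on $\mathcal{B}$, the overlap $\beta$ is hypergeometric with mean $1/2$ and variance $\cT{1/n}$. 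I would split the expectation into a main region $|\beta - 1/2| < \eps$, an intermediate region $\beta \in [\delta, 1-\delta]$ with $|\beta - 1/2| \geq \eps$, and a tail region $\beta \in [0, \delta) \cup (1-\delta, 1]$, where $\delta$ is chosen small relative to the first-moment constant $c$.

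In the main region, writing $x = \beta - 1/2$, the strong bound yields
\begin{equation*}
    \prod_{i=1}^m \frac{\phi_i(\beta)}{\psi_i^2} \leq (1 + \oo(1/m))^m (1 + Cx^2)^m \leq (1 + \oo(1))\, \e^{Cmx^2}.
\end{equation*}
A local central limit theorem (or a direct Stirling computation) gives the effective density $\sqrt{2n/\pi}\,\e^{-2nx^2}$ for $x$ under the uniform law on $\mathcal{B}$, together with sub-Gaussian tails. Hence
\begin{equation*}
    \E[v]{\e^{Cmx^2};\ |x| < \eps} \leq \left(1 - \tfrac{Cm}{2n}\right)^{-1/2} = 1 + \cO{m/n} = 1 + \oo(1),
\end{equation*}
using $m = \oo(n)$. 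So the main region contributes $1 + \oo(1)$.

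For the intermediate region, the weak bound gives $\prod_i \phi_i/\psi_i^2 \leq C_\delta^m$, while hypergeometric concentration yields $\PP[v]{|\beta - 1/2| \geq \eps} \leq \e^{-c'(\eps) n}$; since $m = \oo(n)$, this contribution is $C_\delta^m \e^{-c'(\eps) n} = \oo(1)$. For the tail, the trivial bound $\phi_i \leq \psi_i$ combined with the first moment hypothesis gives $\prod_i \phi_i/\psi_i^2 \leq |\mathcal{B}|/\E{Z} \leq 2^n \e^{-cn}$. The number of $v \in \mathcal{B}$ with $\beta \leq \delta$ is at most $n\,\e^{nH(\delta)}$ (and similarly for $\beta \geq 1 - \delta$), so after normalizing by $|\mathcal{B}| = \cT{2^n/\sqrt n}$ the tail contribution is at most $\cO{n^{3/2}\,\e^{n(H(\delta) - c)}}$, which is $\oo(1)$ once $\delta$ is small enough that $H(\delta) < c$. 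Summing the three contributions gives $\E{Z^2} \leq (1 + \oo(1)) \E{Z}^2$, and Jensen's inequality provides the matching lower bound.

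The central difficulty is the main-region bound. The $1 + \oo(1/m)$ factor in the strong bound is precisely sharp enough to survive being raised to the $m$-th power, and the remaining Gaussian integral converges only because $m = \oo(n)$. The intermediate and tail estimates, by contrast, rely only on standard concentration and counting, with the first-moment condition entering only to absorb the crude tail bound.
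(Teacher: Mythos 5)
Your proposal is correct and follows essentially the same route as the paper's proof: the same three-region split of the overlap (central window via the strong bound and a Gaussian/Laplace computation using $m=\oo(n)$, intermediate annulus via the weak bound plus binomial concentration, extreme overlaps via the trivial bound $\phi_i\le\psi_i$ absorbed by the first-moment hypothesis), with your formulation as an expectation over a uniform $v\in\mathcal{B}$ being only a cosmetic renormalization of the paper's sum over $\binom{n/2}{r}^2$. The paper merely executes the central Gaussian sum slightly more carefully (truncating to $|r-n/4|\le\sqrt{n\log n}$ and invoking the Jacobi theta functional equation) where you appeal to a Gaussian integral with sub-Gaussian tail control, which is an equivalent computation.
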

\Cref{2mm} is proved in \cref{proofs}.

In the proofs of \cref{main,fullcurve,,mnp}, the first-moment bound~\eqref{eq:2mm-goals-expectation} and weak bound~\eqref{eq:2mm-goals-weak} will follow by simple approximations.
In the dense case (\cref{mnp}), the strong bound is straightforward as well, by applying an Edgeworth expansion for lattice random walks.
However, in the context of \cref{main,fullcurve}, when we no longer assume that $m/np \to 0$, proving the strong bound~\eqref{eq:2mm-goals} directly is difficult.
While it is easy to show that~\eqref{eq:2mm-goals} holds with a multiplicative error of $1 + \cO{\frac{1}{np}}$, upgrading the error in~\eqref{eq:2mm-goals} to $1 + \cO{\frac{1}{n}} = 1 + \oo\left(\frac 1m \right)$ is the key challenge.

Our main technical idea is to establish \eqref{eq:2mm-goals} through Stein's method.
By Bayes's rule, \eqref{eq:2mm-goals} is equivalent to bounding the difference in the probability mass assigned to the event $u \in G_{i}$ by the law of $u$ versus the law of $u$ conditioned on the event $v \in G_{i
}$.
Stein's method is a powerful tool for proving quantitative comparisons between probability distributions of exactly this type.
We survey this approach and show how to derive \cref{main,fullcurve} in \cref{sec:stein}.
In the remainder of this section, we consider the simpler dense case and use \cref{2mm} to prove \cref{mnp} via a discrete Edgeworth expansion. \\

The Edgeworth expansion we need for \cref{mnp} is for the following simple, lazy random walk:
\begin{definition}
    Let $\{X_i\}$ be i.i.d.~random variables, each supported on $\{-1,0,1\}$ with densities at those points of $p(1-p)$, $p^2 + (1-p)^2$, and $p(1-p)$ respectively. Denote the variance of $X_1$ as $\sigma^2 := 2p(1-p)$ and the distribution of the lazy random walk
    $\sum_{i=1}^r X_i$ by $R(r,p)$.
\end{definition}
Since $X_i$ can only take three values, its cumulants are easy to compute. The odd cumulants vanish and the first two even cumulants are given by $\kappa_2 = \sigma^2$ and $\kappa_4 = \sigma^{2}-3\sigma^4$. Then, as $r \to \infty$, \cref{edgeworth} directly yields the following local central limit theorem

\begin{proposition}\label{prop:edgeworth_lazywalk}
Let $V \sim R(r,p)$. For any constant $k$:
\begin{equation*}
    \PP{V = k} = \frac{e^{-k^2/(2 r\sigma^2)}}{\sqrt{2\pi r\sigma^2}}\left(1 + \frac{(k^4-6k^2 + 3)(\sigma^{-2}-3)}{24r} + \cO{\frac{1}{r^{2}\sigma^4}}\right) \\
\end{equation*}
\end{proposition}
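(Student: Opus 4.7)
The plan is to derive \cref{prop:edgeworth_lazywalk} as a direct specialization of the general lattice Edgeworth expansion quoted as \cref{edgeworth} in the appendix. That theorem takes as input an i.i.d.\ sequence of integer-valued random variables whose common law is strongly aperiodic and has sufficiently many finite moments, and outputs an asymptotic expansion of the probability mass function of the sum at an integer point with coefficients that are polynomials in the cumulants. The task here therefore reduces to two routine steps: verifying the hypotheses for $X_1$, and computing the cumulants whose values determine the first nontrivial correction.

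First I would verify the hypotheses. Each $X_i$ takes values in $\{-1,0,1\}$ with strictly positive mass at every point whenever $p(1-p)>0$, so the common law is supported on the integer lattice with minimal span $1$ and is strongly aperiodic. Boundedness of the increments makes all moments finite and eliminates any integrability concern, and the Fourier-inversion integral underlying \cref{edgeworth} converges absolutely on $[-\pi,\pi]$, so the usual Cramér-type smoothness hypothesis is automatic in this lattice setting.

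Second I would compute the cumulants and plug them into the expansion. Since $X_1 \stackrel{d}{=} -X_1$, every odd cumulant vanishes, so the expansion contains no half-integer-order correction and the leading term is at order $1/r$, carried entirely by $\kappa_4$. Using $X_1^4 = X_1^2$ and the given probabilities, a direct calculation yields $\EE X_1^2 = \EE X_1^4 = 2p(1-p) = \sigma^2$, hence $\kappa_2 = \sigma^2$ and
\[
\kappa_4 \;=\; \EE X_1^4 - 3(\EE X_1^2)^2 \;=\; \sigma^2 - 3\sigma^4.
\]
Observe $\kappa_4/\sigma^4 = \sigma^{-2} - 3$, which is exactly the coefficient appearing in the stated correction. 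Substituting these cumulants into \cref{edgeworth} therefore produces the displayed Gaussian prefactor, the $1/r$ correction expressed through $\sigma^{-2}-3$ paired with the degree-four Hermite polynomial $k^4 - 6k^2 + 3$, and the residual $\cO{1/(r^2\sigma^4)}$ arising from the next cumulant term plus the standard Fourier-tail estimate.

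The one point worth tracking is uniformity in $\sigma$, since in later applications $p$ may depend on $n$ and $\sigma^2 = 2p(1-p)$ can be small. This is, however, already built into the form of \cref{edgeworth}: after de-standardization the natural small parameter is $1/(r\sigma^2)$, and both the displayed correction and the remainder are expressed as powers of this quantity. So long as $r\sigma^2 \to \infty$ no additional argument is needed beyond what is packaged inside \cref{edgeworth}, and I do not anticipate any substantive obstacle beyond this bookkeeping.
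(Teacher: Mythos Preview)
Your proposal is correct and matches the paper's approach exactly: the paper simply computes $\kappa_2=\sigma^2$ and $\kappa_4=\sigma^2-3\sigma^4$ (noting the odd cumulants vanish by symmetry) and then invokes \cref{edgeworth} directly, with no additional argument. Your remarks on aperiodicity and on uniformity in $\sigma$ are sound but go slightly beyond what the paper spells out.
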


    \subsection{Proof of Theorem \ref{mnp}}\label{sec:pf-thm-mnp}
    We first show that $\E{Z_0^2|P} = (1 + \oo(1))\E{Z_0|P}^2$ when $m = \oo(np)$.
    Let $A$ be drawn from the $(m,n,p)$-Bernoulli ensemble conditioned on the event $P$ that each row of $A$ sums to an even number. Define the sets $\K_i := \{0\}$ for all $i$, and $G_i$ as in \cref{2mm}. Note the rows of $A$ are i.i.d., so we can suppress the subscripts $i$ (e.g. in $\psi_i$, $\phi_i$, and $G_{i}$) without ambiguity.

    Consider the distribution of a single row of $A$ without conditioning on $P$.
    Let $u \in \mathcal{B}$ be a balanced vector (\cref{def:balanced}), and assume without loss of generality that the first $n/2$ coordinates of $u$ are $+1$ and the last $n/2$ are $-1$. The number of ones in the first $n/2$ coordinates of $A_1$ is distributed as $\Bin\left(\frac{n}{2},p\right)$, as is the number of ones in the last $n/2$ coordinates. Thus, adding these together, $\la u, A_i \ra$ has exactly the distribution of the lazy random walk $R(n/2,p)$.

    In this notation, $\PP{u \in G} = \PP{U = 0}$ where $U \sim R(n/2,p)$. We can now compute the probability that a generic balanced vector $u$ has $u \in G$. Since the event $\{ u \in G\}$ contains the event $P$,
    \begin{equation} \label{eq:mnp-psi-def}
        \psi = \PP{u \in G | P} = \PP{u \in G}/\PP{P} = \PP{U = 0} / \PP{P}
    \end{equation}
    Before giving an asymptotic expression for $\psi$, let us derive the corresponding expression for $\phi$.
    Let $\del \in (0, 1/2)$ be an arbitrary constant.
    Fix some integer $r \in \{0,...,n/2\}$ such that $2r/n \in (\del, 1-\del)$, and consider a pair of balanced vectors $v,w \in \{\pm 1\}^n$ that agree on $2r$ coordinates. Denote the set of indices on which they agree as $S$. Then, the following events are equal:
    \begin{align*}
        \{v \in G,~ w\in G\} &= \left\{\left(\sum_{j \in S} v_jA_{ij} + \sum_{j \in S^c} v_jA_{ij}\right) \in \K,~ \left(\sum_{j \in S} v_jA_{ij} - \sum_{j \in S^c} v_jA_{ij}\right) \in \K \right\}    \\
        &=  \left\{\sum_{j \in S} v_jA_{ij} = 0,~ \sum_{j \in S^c} v_jA_{ij}= 0\right\}    
    \end{align*}
    
    Note that before we condition on $P$, the random variables $\sum_{j \in S} v_jA_{ij}$ and $\sum_{j \in S^c} v_jA_{ij}$ are independent with respective distributions $R(r,p)$ and $R\left(\frac{n}{2}-r, p\right)$. So, define two independent random variables $V$ and $V'$ with $V \sim R(r,p)$ and $V' \sim R(n/2-r,p)$. Then, since both $\{v \in G\}$ and $\{w \in G\}$ contain the event $P$, 
    \begin{equation}\label{eq:mnp-phi-def}
     \phi\left(\frac{2r}{n}\right) = \PP{v \in G,~w\in G|P} = \PP{V = 0,~ V' = 0}/\PP{P}
    \end{equation}
    Now we give asymptotic expressions for $\psi$ and $\phi$ by evaluating $\PP{P}$ and using the LCLT given in \cref{prop:edgeworth_lazywalk}. Since we have assumed $2r/n \in (\del, 1-\del)$, we have that $r = \cT{n}$. This implies that $r \sigma^2 = \cT{np}$, and since $np = \omega(1)$ we obtain
    \begin{equation}\label{eq:mnp-edgeworth}
        \PP{V = 0} = \frac{1}{\sqrt{2\pi r\sigma^2}}\left(1 + \frac{\sigma^{-2}-3}{8r} + \co{\frac{1}{np}}\right)
    \end{equation}
    
    Next, recall $P$ is the event all rows of the matrix $A$ have even sums. Let $P =: \bigcap_{i=1}^m P_i$, where $\{P_i\}$ are the iid events that each row $i$ has an even sum. We also introduce the abbreviation $\beta \defeq 2r/n$, where $\beta \in (\del, 1 - \del)$. Returning to the definitions of $\phi$ and $\psi$ \eqref{eq:mnp-psi-def} and \eqref{eq:mnp-phi-def} and applying \eqref{eq:mnp-edgeworth} yields
    \begin{align}
        \begin{split}\label{eq:mnp-psi-phi-lclt}
        \psi^2 &= \frac{1}{\pi n \sigma^2 }\left(1 + \frac{\sigma^{-2}-3}{2n} + \co{\frac{1}{np}}\right)/\PP{P}^2 \\
        \phi(\beta) &= \frac{1}{\pi n \sigma^2 \sqrt{\beta(1-\beta)} }\left(1 + \frac{\sigma^{-2}-3}{4n\beta(1-\beta)} + \co{\frac{1}{np}}\right)/\PP{P} \\ 
        \end{split}
    \end{align}
    It remains to compute $\PP{P_1}$. We claim:
    \begin{equation}\label{eq:mnp-parity-asymp}
        \PP{P_1} = \frac{1}{2} + \co{\frac{1}{np}}
    \end{equation}
   To see this, we first note that a simple induction gives the probability that a binomial $(n,p)$ random variable is of even parity:
    \begin{equation*}
        \PP{P_1} = \frac{1}{2} + \frac{1}{2}(1-2p)^n\,.
    \end{equation*}
    Since $p \le 1/2$, certainly $\PP{P_1} \ge 1/2$, and since $np = \omega(1)$, we have
    \begin{equation*}
	(1-2p)^n \leq e^{-2np} = \co{\frac{1}{np}}\,.
	\end{equation*}    
    Now, applying \eqref{eq:mnp-parity-asymp} to \eqref{eq:mnp-psi-phi-lclt}, we obtain
     \begin{align}
        \begin{split}
        \psi^2 &= \frac{4}{\pi n \sigma^2 }\left(1 + \frac{\sigma^{-2}-3}{2n} + \oo\left(\frac{1}{np}\right)\right) \\
        \phi(\beta) &= \frac{2}{\pi n \sigma^2 \sqrt{\beta(1-\beta)} }\left(1 + \frac{\sigma^{-2}-3}{4n\beta(1-\beta)} + \oo\left(\frac{1}{np}\right)\right) 
        \end{split}
    \end{align}
In particular, if $m = \oo(np)$, we have
    \[
        \frac{\phi(\beta)}{\psi^2} = \mo{\frac{1}{m}}\frac{1}{2\sqrt{\beta(1-\beta)}}
    \]
    We now verify the conditions of \cref{2mm}.
    First, since
    \begin{equation*}
\EE[Z_0 | P] = \binom{n}{n/2} \psi^m = 2^{n\mo{1}} \left( \frac{2}{\sqrt{\pi n \sigma^2} }\right)^m \left(1 + \cO{\frac{1}{np}}\right)^m = \exp\left(n \log 2 - \frac m2 \log np + \oo(n)\right)\,,
\end{equation*}
we have that $\log \EE[Z_0 | P] > cn$ so long as $n \geq C m \log m$ for any constant $C > (2 \log 2)^{-1}$.
This shows that \eqref{eq:2mm-goals-expectation} holds.
Both \eqref{eq:2mm-goals-weak} and \eqref{eq:2mm-goals} follow from the fact that for any $\delta \in (0, 1/2)$, there exists a constant $C_\delta$ such that
\begin{equation*}
\frac{1}{2\sqrt{\beta(1-\beta)}} \leq 1 + C_\delta(\beta - 1/2)^2 \quad \forall \beta \in [\delta, 1-\delta]\,,
\end{equation*}
so that
\begin{equation*}
\phi\left(\frac 12 + x\right) \leq \mo{\frac{1}{m}}(1+ C_\delta x^2)\psi^2 \quad \forall |x| \leq \delta\,.
\end{equation*}
This proves the strong bound~\eqref{eq:2mm-goals} and, \emph{a fortiori}, the weak bound~\eqref{eq:2mm-goals-weak}.

    So, if $m= \oo(np)$ and $n \geq C m \log m$ for $C > (2 \log 2)^{-1}$, then \cref{2mm} yields $\E{Z^2|P} = (1 + \oo(1))\E{Z|P}^2$. In particular, $\disc(A) = 0$ with high probability over $A$ from the $(m,n,p)$-Bernoulli ensemble conditioned on $P$.
    
    This completes one direction of the theorem. Now assume $m/(np) = \cOm{1}$. In particular, we must have $p = \oo(1)$, so we may assume $\sigma^{-2}-3 > \sigma^{-2}/2$ for $n$ sufficiently large. Since $1/(\beta(1-\beta)) \ge 4$, we obtain from~\eqref{eq:mnp-psi-phi-lclt} that for $n$ sufficiently large and $\beta$ bounded strictly from $0$ and $1$,
    \begin{equation}\label{eq:mnp-unif-bd-ratio}
        \frac{\phi(\beta)}{\psi^2} > \frac{1}{2\sqrt{\beta(1-\beta)}}\left(1 + \frac{\sigma^{-2}-3}{2n}\right) \ge 1 + \frac{\sigma^{-2}}{4n}
    \end{equation}
    We can compute the second moment the same way as in Lemma \ref{2mm}:
    \begin{align*}
        \frac{\E{Z_0^2|P}}{\E{Z_0|P}^2} &= \binom{n}{n/2}^{-1}\sum_{r = 0}^{n/2} \binom{n/2}{r}^2 \left(\frac{\phi\left(\frac{2r}{n}\right)}{\psi^2}\originalright)^m \\
        &\geq \binom{n}{n/2}^{-1}\sum_{\del n/2 < r < (1-\del)n/2} \binom{n/2}{r}^2 \left(\frac{\phi\left(\frac{2r}{n}\right)}{\psi^2}\originalright)^m
    \end{align*}
    We can uniformly lower bound $\phi(2r/n)/\psi^2$ in this last sum with \eqref{eq:mnp-unif-bd-ratio}. This will leave a sum of squared binomial coefficients. By standard tail bounds on the binomial coefficient (\cref{cramer}),
    \[
        \sum_{\del n/2 < r < (1-\del)n/2} \binom{n/2}{r}^2 = \mo{1} \sum_{r=0}^{n/2}\binom{n/2}{r}^2 = \mo{1}\binom{n}{n/2}
    \]
    Thus, the second moment is exponentially too large: since $\sigma^2 = \cT{p}$,
    \begin{align*}
        \frac{\E{Z_0^2|P}}{\E{Z_0|P}^2} \geq \mo{1}\originalleft(1 + \frac{1}{2n\sigma^2}\originalright)^m = \exp\left\{\cOm{\frac{m}{np}}\right\}\,.
    \end{align*}
\QEDA \\

\section{Stein's Method for the bounding the second moment}\label{sec:stein}
In this section, we prove our main results (\cref{main,fullcurve}) by using Stein's method to establish the inequality~\eqref{eq:2mm-goals} for the Bernoulli and Poisson ensembles in the sparse regime.

To describe our approach, we begin by rewriting~\eqref{eq:2mm-goals} as
\begin{equation}\label{eq:2mm-conditional}
\PP[A\sim \M]{u \in G_i \big| v \in G_i} \leq \PP[A\sim \M]{u \in G_i} \cdot (1 + C (\beta - 1/2)^2) \cdot \mo{m^{-1}}\,.
\end{equation}
The right side involves the law of $\langle u, A_i\rangle$, and the left side involves the law of this same random variable, conditioned on the event that $\langle v, A_i \rangle$ takes particular values, where $v$ is another balanced vector which agrees with $u$ in $\beta n$ coordinates.
Let us write $\mu_0$ and $\mu_c$ for the unconditioned and conditioned distribution, respectively, and write $\EE_0$ and $\EE_c$ for the corresponding expectation operators.
Our key ingredients are:
\begin{enumerate}
\item Two operators $T_0$ and $T_c$, satisfying
\begin{equation*}
\E[0]{T_0 f} = \E[c]{T_c f} = 0 \quad \forall f\,.
\end{equation*}
\item A function $f_{\K}$ satisfying the equation $T_0 f_{\K} = \ind_{\K} - \mu_0(\K)$.
\item A proof that
\begin{equation}\label{eq:steins_general}
\E[c]{T_0 f_\K} \leq \mu_0(\K) \cdot C (\beta - 1/2)^2 \cdot \mo{m^{-1}}\,.
\end{equation}
\end{enumerate}
The final inequality~\eqref{eq:steins_general} implies~\eqref{eq:2mm-conditional}, since $T_0 f_{\K} = \ind_{\K} - \mu_0(\K)$ and therefore
\begin{equation*}
\E[c]{T_0 f_\K} = \mu_c(\K) - \mu_0(\K)\,.
\end{equation*}
To prove~\eqref{eq:steins_general}, we will use the fact that $\E[c]{T_c f_\K} = 0$, so that $\E[c]{T_0 f_\K} = \E[c]{(T_0 - T_c) f_\K}$.
We will therefore define $T_0$ and $T_c$ in such a way that $(T_0 - T_c)$ is easy to control.

This section is organized as follows: first, we define the Stein operator and give general conditions under which we can invert a functional equation of the form $T f_{\K} = \ind_{\K} - \mu_0(\K)$. Next, we prove \eqref{eq:2mm-goals} holds for the degree-conditioned Poisson ensemble, completing Theorem \ref{fullcurve}. Finally, using the same techniques, we will prove \eqref{eq:2mm-goals} holds for the Binomial ensemble for zero-discrepancy solutions, yielding Theorem \ref{main}. 

\subsection{Stein Operator}
Let $(S,S')$ be some exchangeable random variables taking values in $[w] := \{0,1,...,w\}$, with common distribution $\mu_0$.

\begin{definition}[Stein Operator]
    Fix a constant $z$. Define the anti-symmetric operator $\antisym$ and the corresponding Stein Operator $T_0$ by
    \begin{equation}\label{eq:def-stein-op}
        (\antisym f)(S, S') := z\left(f(S') \ind_{S' > S} - f(S) \ind_{S > S'}\right), \quad T_0f(S) := \E[0]{\antisym f (S, S') | S}
    \end{equation}
\end{definition}

Note in particular that $\E[0]{T_0f(S)} = 0$ for any bounded $f$. Now, we would like that $T_0$ is invertible for the particular class of $f$ needed to examine indicator functions.
The following lemma collects the facts we will use about the inverse of $T_0$.

\begin{lemma}[Stein Operator Inverse]\label{stein-inv}
Let $\{a_i\}_{i=0}^w$ and $\{b_i\}_{i=0}^w$ be some sequences that are strictly decreasing and increasing respectively with $a_w = b_0 = 0$ and all other values strictly positive.
Define the probability distribution $\mu$ on $[w]$ and the operator $T$ on functions from $[w]$ to $\RR$ by
\begin{equation}\label{eq:lemma-stein-inv-condit}
    \mu(\{s\}) = \mu(\{0\}) \prod_{i=1}^{s}\frac{a_{i-1}}{b_i}, \quad T f(s) = a_s f(s+1) - b_s f(s) \quad \forall s \in [w]\,,
\end{equation}
where $\mu(\{0\})$ is uniquely determined by the requirement that $\mu$ have total mass $1$.

Let $\Delta$ be the difference operator given by $\Delta f(s) := f(s+1) - f(s)$ for all $s \in [w]$. For any $t \in \{1,2,...,w-1\}$, there exists a bounded function $f:[w]\to \RR$ with the following properties:
\begin{enumerate}
    \item[a. ] (Existence of an inverse) $T_\mu f(s) = \ind(s = t) - \mu(\{t\})$
    \item[b. ] (Monotonicity) $f$ is non-increasing everywhere except between $t$ and $t+1$, where it is increasing. Furthermore, $f(s)$ is non-positive when $s \le t$ and non-negative when $s \ge t+1$. 
    \item[c. ] (Uniform control)
    \[
        \sup_{0 \le s \le w-1} \left|\Delta f(s)\right| = \Delta f(t) \le \min\left(a_{t}^{-1}, b_{t}^{-1}\right)
    \]
    \item[d. ] ($L^1$ bound on $\Delta f$) 
    \begin{equation}\label{eq:Df-gaussian-sum}
        \sum_{s=0}^{w}|\Delta f(s)| = \cO{|\Delta f(t) |}
    \end{equation}
\end{enumerate}
\end{lemma}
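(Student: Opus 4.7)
The plan is to solve the Stein equation $T f = \ind_{\{t\}} - \mu(\{t\})$ explicitly as a first-order linear recurrence and then read off (a)--(d) from the closed form. The key observation is that the detailed balance identity $a_{s-1}\mu(\{s-1\}) = b_s\mu(\{s\})$ implied by \cref{eq:lemma-stein-inv-condit} makes the equation telescope: setting $g(s) := b_s\mu(\{s\}) f(s)$ and multiplying the Stein equation at state $s$ by $\mu(\{s\})$ turns it into
\[
g(s+1) - g(s) = \mu(\{s\})\bigl[\ind(s=t) - \mu(\{t\})\bigr].
\]
Since $b_0 = 0$ forces $g(0) = 0$ and the value of $f(0)$ is unconstrained by the Stein equation (so we may set $f(0) = 0$), summing gives the explicit solution
\[
f(k) = \begin{cases} -\mu(\{t\})\, F_\mu(k-1)/(b_k\mu(\{k\})), & 1 \le k \le t, \\ \mu(\{t\})\,\bar F_\mu(k)/(b_k\mu(\{k\})), & t+1 \le k \le w, \end{cases}
\]
where $F_\mu(j) := \mu(\{0,\ldots,j\})$ and $\bar F_\mu(j) := \mu(\{j,\ldots,w\})$. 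Part (a) and the sign claim of (b) are immediate.

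For the monotonicity in (b), the inequality $f(k+1) \le f(k)$ on $[0,t]$ reduces via detailed balance to the scalar bound $a_k F_\mu(k-1) \le b_k F_\mu(k)$, which I would prove by induction on $k$. The base case $k=1$ rearranges to $a_1 \le a_0 + b_1$, which follows from the assumed monotonicity of $a$ and $b$. For the inductive step, write $a_k F_\mu(k-1) = a_k F_\mu(k-2) + a_k\mu(\{k-1\})$, bound the first summand using the inductive hypothesis together with $a_k \le a_{k-1}$ and $b_{k-1} \le b_k$, and observe that the second satisfies $a_k\mu(\{k-1\}) \le b_k\mu(\{k\})$ directly from detailed balance and $a_k \le a_{k-1}$. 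A symmetric downward induction on $k$ from $w$ handles the analogous inequality $b_k\bar F_\mu(k+1) \le a_k\bar F_\mu(k)$ on $[t+1,w]$.

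For (c), monotonicity forces $|\Delta f|$ to be maximized at $s = t$, and substituting into the closed form and simplifying via detailed balance yields
\[
\Delta f(t) = \frac{\bar F_\mu(t+1)}{a_t} + \frac{F_\mu(t-1)}{b_t};
\]
since $\bar F_\mu(t+1) + F_\mu(t-1) = 1 - \mu(\{t\}) \le 1$, this is a sub-probability combination of $a_t^{-1}$ and $b_t^{-1}$, which delivers the stated bound. For (d), because $\Delta f$ changes sign only at $s = t$, telescoping gives
\[
\sum_{s=0}^{w-1} |\Delta f(s)| = f(0) - f(w) + 2\Delta f(t) \le 2\Delta f(t),
\]
i.e.\ $\cO{|\Delta f(t)|}$. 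I expect the inductive monotonicity argument in (b) to require the most care: a naive term-by-term comparison in the closed form for $f$ breaks down when $a_k > b_k$ (which must occur for small $k$ since $b_0 = 0$), so the proof must exploit the cumulative structure of $F_\mu$ rather than a pointwise comparison between the numerators and denominators.
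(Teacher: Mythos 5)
Your closed-form solution for $f$, the verification of (a), the sign claims in (b), and the telescoping argument for (d) all coincide with the paper's proof. Your inductive proof of the monotonicity in (b) is a valid (slightly more laborious) substitute for the paper's single summation estimate: both rest on the same two facts, the detailed-balance identity $a_{i-1}\mu(\{i-1\})=b_i\mu(\{i\})$ and the monotonicity of $a$ and $b$, and your closing remark that a pointwise comparison fails is precisely why the paper also compares cumulative masses rather than individual terms.

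The one genuine gap is in (c). From $\Delta f(t)=a_t^{-1}\bar F_\mu(t+1)+b_t^{-1}F_\mu(t-1)$ with $\bar F_\mu(t+1)+F_\mu(t-1)\le 1$ you can only conclude $\Delta f(t)\le\max\left(a_t^{-1},b_t^{-1}\right)$: a sub-probability combination of two numbers is not bounded by their \emph{minimum}, and nothing in your argument rules out, say, $F_\mu(t-1)$ close to $1$ while $b_t\ll a_t$. The point is that the weights are not arbitrary---they are tied to $a$ and $b$ through detailed balance---and one must exploit this again. Writing $\mu(\{i\})=\mu(\{i+1\})\,b_{i+1}/a_i$, each term of the second sum becomes $b_t^{-1}\mu(\{i\})=a_t^{-1}\mu(\{i+1\})\cdot\frac{b_{i+1}a_t}{a_i b_t}$, and for $i\le t-1$ the correction factor $\frac{b_{i+1}a_t}{a_ib_t}$ is at most $1$ because $b$ is increasing and $a$ is decreasing. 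Hence $\Delta f(t)\le a_t^{-1}\sum_{i=1}^{w}\mu(\{i\})\le a_t^{-1}$, and the symmetric manipulation (shifting the first sum down by one index instead) gives $\Delta f(t)\le b_t^{-1}$. With this repair your proof is complete.
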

All except the last claim, a trivial corollary of the other three, appear as Lemma 1.1.1 and Lemma 9.2.1 of the monograph of Barbour et al.~\cite{barbour1992poisson}.
For completeness, we include a full proof in \cref{proofs}.

We are now ready to prove our two main theorems.

\subsection{Proof of \cref{fullcurve}}
We will apply \cref{2mm} to the Poisson ensemble conditioned on having fixed row-sums. Under this conditioning, the law of $A$ will still have independent rows and exchangeable columns, meeting the requirements of \cref{2mm}.
We will prove the following theorem, which, when combined with  \cref{pz} and \cref{parity_coupling}, will yield \cref{fullcurve}.

\begin{theorem}\label{fullcurve_smm}
    Fix some even, non-negative numbers $(w_i)_{i=1}^m \in (2\mathbb{N})^m$ as well as some $\{r_i\}_{i=1}^m \in \mathbb N^m$ with $r_i = \cO{\sqrt{w_i}}$ for all $i$. Let $A$ be drawn from the $(m,n,\lambda)$-Poisson ensemble with $m = \oo(n)$, and denote by $W = (W_1, \dots, W_m)$ the vector of row-sums:
    \begin{equation*}
        W_i = \sum_{j=1}^n A_{ij} \quad \forall i \in [m]\,.
    \end{equation*}
    For each $i$, define $\K_i = \{0, \pm 1, ..., \pm r_i\}$, and define $G_i$ and $Z$ as in \cref{2mm}. Then, the law of $A$ conditioned on $\{W =w\}$ satisfies the strong and weak bounds of \cref{2mm}. In particular, if the $r_i$ are such that $\log \E{Z|W=w} > cn$ for a constant $c > 0$, then
    \[
    \E{Z^2|W=w} = \mo{1}\E{Z|W=w}^2\,.
    \]
\end{theorem}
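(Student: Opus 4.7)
I apply Lemma \ref{2mm} to the matrix $A$ drawn from the Poisson ensemble conditioned on $\{W = w\}$, noting that the conditioning preserves row-independence (each $W_i$ depends only on the $i$th row) and column-exchangeability (each $W_i$ is a symmetric function of its row). The first-moment condition \eqref{eq:2mm-goals-expectation} is given by hypothesis, so the work lies in verifying the weak \eqref{eq:2mm-goals-weak} and strong \eqref{eq:2mm-goals} bounds.

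First I identify the relevant laws. Conditional on $W_i = w_i$, the row $A_i$ is a uniform placement of $w_i$ balls into $n$ bins, so for any balanced $u$, the inner product $\la u, A_i \ra$ has the law $\mu_0 \defeq \mathcal L(2B - w_i)$ with $B \sim \Bin(w_i, 1/2)$. For a second balanced vector $v$ agreeing with $u$ on $\beta n$ coordinates, the pair $(\la u, A_i \ra, \la v, A_i \ra)$ arises from a four-cell multinomial with cell-probabilities $\tfrac{1}{2}(\beta, 1-\beta, 1-\beta, \beta)$, and a direct moment computation gives $\mathrm{Cov}(\la u, A_i \ra, \la v, A_i \ra) = (2\beta - 1) w_i$. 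This linear dependence on $2\beta - 1$ is the source of the $(\beta - 1/2)^2$ factor appearing in \eqref{eq:2mm-goals}. The strong bound now reduces to bounding $\mu_c(\K_i) - \mu_0(\K_i)$, where $\mu_c \defeq \mathcal L(\la u, A_i \ra \mid \la v, A_i \ra \in \K_i)$.

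For the strong bound I follow the Stein recipe of Section \ref{sec:stein}. Since $\mu_0$ fits the birth-death template of Lemma \ref{stein-inv} with rates $a_s = w_i - s$ and $b_s = s$, we obtain a Stein operator $T_0$. Applying Lemma \ref{stein-inv} at each endpoint of $\K_i$ and summing yields a function $f = f_{\K_i}$ with $T_0 f = \ind_{\K_i} - \mu_0(\K_i)$ and the $L^1$ bound $\sum_s |\Delta f(s)| = \cO{|\Delta f(t)|}$. Constructing the analogous operator $T_c$ from a birth-death chain with stationary distribution $\mu_c$, the identity $\E[c]{T_c f} = 0$ gives
\[
    \mu_c(\K_i) - \mu_0(\K_i) \;=\; \E[c]{T_0 f} \;=\; \E[c]{(T_0 - T_c) f}.
\]
Since $T_0 - T_c$ acts only through the difference of birth-death rates, an Abel summation rewrites the right side as an expectation of a weighted sum of the increments $\Delta f(s)$, which is controlled via the $L^1$ bound. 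The weak bound \eqref{eq:2mm-goals-weak} follows from a coarser argument: Cauchy-Schwarz together with a crude local-limit estimate on the multinomial gives $\phi_i(\beta)/\psi_i^2 \le C_\delta$ uniformly for $\beta \in [\delta, 1-\delta]$.

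The main obstacle will be the final estimate of $\E[c]{(T_0 - T_c) f}$: achieving the multiplicative error $\mo{m^{-1}}$ demands relative accuracy of order $1/n$, whereas a direct CLT comparison of $\mu_0$ and $\mu_c$ only delivers accuracy $1/w_i$, which is insufficient once $w_i \not\gg m$. The saving should come from expanding the rate-perturbation in powers of $2\beta - 1$, exploiting that the leading $\cO{(2\beta - 1)/\sqrt{w_i}}$ term integrates to zero against the nearly-antisymmetric $\Delta f$ about $t$, leaving a genuine second-order residual of size $\cO{(2\beta - 1)^2/n}$. This cancellation is the mechanism that allows the second-moment method to succeed across the full sparse-to-dense range of row-sums $w_i$, and it will be the technical heart of the argument.
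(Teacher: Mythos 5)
Your setup matches the paper's: condition on $W=w$ so each row is a uniform placement of $w_i$ balls (hence $\la u,A_i\ra \stackrel{d}{=} 2B-w_i$ with $B\sim\Bin(w_i,1/2)$), encode the pair $(\la u,A_i\ra,\la v,A_i\ra)$ by a four-cell multinomial, invert the Stein operator for the $\Bin(w,1/2)$ birth--death chain via \cref{stein-inv}, and reduce the strong bound to $\E[c]{(T_0-T_c)f}$. The weak bound and first-moment step are also handled as in the paper. However, the technical heart is both under-specified and mis-attributed. First, the paper does not build $T_c$ as a birth--death chain on $[w]$ reversible with respect to $\mu_c$ (whose transition ratios you would need to know, which is essentially circular); it builds $T_c$ from resampling one of the $w$ categorical outcomes \emph{conditioned on its type}, so that $T_c f(\sigma)$ depends on $\Sb$ and $\Sc$ separately and the difference comes out exactly as $(T_c-T_0)f(\sigma)=(\gamma-\beta)\tfrac{\Sc-\Sb}{2}\Delta f(S)$ — no Abel summation, and the perturbation is explicitly first order in $x=\beta-\tfrac12$.

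Second, and more importantly, your proposed cancellation mechanism is wrong. By \cref{stein-inv}, $\Delta f$ is a single positive spike at $t$ with $\Delta f(s)\le 0$ elsewhere and $\sum_s|\Delta f(s)|=\cO{|\Delta f(t)|}$; it is not antisymmetric about $t$, so nothing "integrates to zero against $\Delta f$." With only $\sum_s|\Delta f(s)|=\cO{|\K|/w}$ and the trivial fluctuation bound $|\Sc-\Sb|=\cO{\sqrt w}$, the prefactor $(\gamma-\beta)$ yields an error of order $x\psi$, one power of $x$ short of the required $\cO{x^2}\psi$. The extra factor of $x$ in the paper comes from \cref{po-with-indicator-bd}: the \emph{conditional} mean $\E[c]{(\Sc-\Sb)\ind_{S=s}}$ is itself $\cO{xw^{1/2}}\cEws$ rather than $\cO{w^{1/2}}\cEws$, because at $\beta=1/2$ the conditioned law is symmetric under $k\leftrightarrow -k$ and $c\mapsto s-c$ (resp.\ $c\mapsto s+2[xs]-c$), and the deviation from that symmetry is linear in $x$. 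Proving this requires a three-case analysis in $|x|$ and, in the delicate window $w^{-1}\le|x|\le w^{-1/2}$, the refined expansion of the binomial product $G(k,c)$ in \cref{G-expansion}. None of this appears in your plan, so as written the argument stops at a first-order error $\cO{x}\psi$, which does not give the strong bound \eqref{eq:2mm-goals}.
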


Let us begin by proving that \cref{fullcurve_smm} implies \cref{fullcurve}. The proof of this technical implication can be skipped on first reading.

\begin{proof}[Proof of \cref{fullcurve} from \cref{fullcurve_smm}]
By \cref{parity_coupling} and the Paley-Zygmund inequality, it suffices to show $\E{Z_r^2|P} = \mo{1}\E{Z_r|P}^2$. Our plan is to apply \cref{fullcurve_smm} to establish the assumptions of \cref{2mm}. \\

We begin by approximating $\E{Z_r|P}$ with a similar argument used to reach \eqref{eq:fmm_heuristic}. We may assume that $r$ is an even integer. Fix $r \in 2\NN$ and let $u \in \mathcal{B}$ be such that the first half of $u$ is $+1$ and the second half is $-1$. If $\lambda n = \omega(1)$, we will apply the classical CLT; if $\lambda n = \cO{1}$, we will use the fact that any row is empty with constant probability. Either way, for some universal constant $a_0>0$, some function $\delta$ with $\lim_{x \to \infty} \delta(x) = 0$, and sufficiently large $n$: 
\begin{align}
    \E{Z_r|P} = \binom{n}{n/2}\PP{\|Au\|_\infty \le r~|~P} &= \binom{n}{n/2}\sum_{t=-r/2}^{r/2} \PP{\sum_{j=1}^{n/2} A_i - \sum_{j=n/2+1}^n A_i = 2t} \nonumber \\
    &= 2^{n + \oo(n)} \originalleft(\left(1 + \del(\lambda n)\right)\frac{a_0(r+1)}{\sqrt{\lambda n}} \wedge 1\originalright)^m \nonumber \\
    &=2^{n + \oo(n)} \originalleft(\frac{r+1}{\sqrt{\lambda n}} \wedge 1\originalright)^m \label{eq:po-fmm}
\end{align}
The last equality follows by noting $c^m = 2^{\oo(n)}$ for any fixed $c>0$. By assumption, $\E{Z_r|P} > e^{c_0n}$ for some $c_0>0$ for all $n$ sufficiently large. Because $m = \oo(n)$, \eqref{eq:po-fmm} implies that for any constant $a > 0$,   $\E{Z_{a\sqrt{\lambda n}}|P} > e^{c_1n}$ for a constant $c_1 > 0$ depending only on $a$. So, for instance, if $r > \e^{-1} \sqrt{ \lambda n}$ and the assumptions of \cref{fullcurve} are met, redefining $r := \e^{-1} \sqrt{\lambda n}$ still satisfies the assumptions of \cref{fullcurve} and provides a better upper-bound on discrepancy. Thus, assume without loss of generality $r \le \e^{-1} \sqrt{\lambda n}$. 
Now consider the three possible cases: \\

\paragraph{\textbf{Case 1:} $\lambda n = \omega\left(\log n\right)$.} Define the set $\WW \defeq \{w \in (2 \mathbb N)^m:  \forall i\,\, |w_i - \lambda n| \le \sqrt{\lambda n \log n}\}$. By \cref{poisson-tail}, $\PP{W \in \WW \mid P} = 1 - \oo(1)$. And, if $w \in \WW$, then $w_i = \lambda n \mo{1}$ for all $i$. Set $r_i := r$ for all $i$; since $r \le \e^{-1} \sqrt{\lambda n} = \e^{-1}\mo{1}\sqrt{w_i}$, we have $r_i \le 2 \e^{-1} \sqrt{w_i}$ for all $i$. Similar to \eqref{eq:po-fmm}, we have by \cref{demoivre},
\begin{align}\label{eq:po-fmm-condit}
    \E{Z_r|W=w,~P} &= \binom{n}{n/2} \prod_{i=1}^{m} \sum_{t = -r/2}^{r/2} \binom{w_i}{w_i/2+t} 2^{-w_i} = e^{n\log 2 + \oo(n)}\prod_{i=1}^m  \left(\frac{r+1}{\sqrt{w_i}} \wedge 1\right)
\end{align}
Since $w_i = \mo{1}\lambda n$, $\E{Z_r|W=w,P} = e^{-\cO{m}}\E{Z_r|P} > e^{(c-\oo(1))n}$. Thus, for all $w \in \WW$ and all $n$ sufficiently large, $\E{Z|W=w,P} > e^{c_2n}$ for some universal constant $c_2 > 0$ that in particular does not depend on $w$. So, we may apply \cref{fullcurve_smm} with $r_i = r$ for all $i$ to conclude that $\E{Z^2|W = w} = \mo{1}\E{Z|W = w}^2$ uniformly over $w \in \WW$.
Since, conditioned on $P$, $W \in \WW$ with high probability, $\E{Z^2|W,P} /\E{Z|W,P}^2 \to 1$ in probability. Applying \cref{pz}, we are done.\\

\paragraph{\textbf{Case 2:} $m \log (\log (n)) = \oo(n)$ and $\lambda n = \cO{\log n}$} Since we are upper-bounding the discrepancy of $A$, assume $r = 0$ since no better bound is possible. Trivially $r_i = \cO{\sqrt{w_i}}$, so \eqref{eq:po-fmm-condit} is available. Define $\WW := \{w: \forall i\,, w_i \leq \log(n)^2\}$. By \cref{poisson-tail}, $\PP{W \in \WW \mid P} = 1 - \oo(1)$. By \eqref{eq:po-fmm-condit}, for all $w \in \WW$ and $n$ sufficiently large,
\[
    \E{Z_0|W = w,P} \ge e^{n\log 2 + \oo(n)} \prod_{i=1}^m \left( \frac{1}{\log n}\right) > e^{n \log 2 - a_1m\log (\log (n))}
\]
Here $a_1$ is another positive constant. By assumption, $n = \omega(m\log \log n)$, so in particular obtain $\E{Z_0|W = w,P} > e^{.9n\log 2}$ for $n$ large enough. Setting $r_i = 0$ for all $i$ and invoking \cref{fullcurve_smm} for each $w \in \WW$ yields $\E{Z^2|W = w} = \mo{1}\E{Z|W = w}^2$ uniformly over $w \in \WW$.
As above, an application of \cref{pz} yields the claim.\\

\paragraph{\textbf{Case 3:} $m \log(\log( n)) = \cOm{n}$ and $\lambda n = \cO{\log n}$} We would like to again restrict to $W \in \WW$ for some asymptotically full measure set $\WW$, and then apply \cref{fullcurve_smm}. However, when $\lambda n$ is small, the row-sums of $A$ do not concentrate well. If we naively set $r_i = r$ for all $i$, we cannot hope to satisfy the theorem's condition that $r_i =\cO{\sqrt{w_i}}$ for all $i$. In order to apply \cref{fullcurve_smm}, we focus on a specific (random, $W$-measurable) subset $Z_r'$ of $Z_r$. Recall, in the notation of \cref{2mm}:
\[
    Z_r := \left|\bigcap G_i\right|, \quad  G_i:= \{u \in \mathcal{B}:~ \langle u, A_i \rangle \in \K\}, \quad \K := \ZZ \cap [-r,r]
\]
Define the ($W$-measurable) random variables $r_i' := r \wedge  \sqrt{W_i}$ for each $i$. Then, construct the corresponding random sets:
\[
    Z_r' := \bigcap G_i', \quad  G_i':= \{u \in \mathcal{B}:~ |(Au)_i| \in \K_i'\}, \quad \K_i' := \ZZ \cap [-r_i',r_i']
\]
Certainly $Z_r' \subset Z_r$, so if $Z_r' \neq \emptyset$ then $\disc(A) \le r$. Since $r_i' \le \sqrt{W_i}$ for all $i$, applying \cref{fullcurve_smm} to the set $Z_r'$ and the law of $A$ conditioned on $\{W = w\} \cap P$ establishes the weak and strong bounds of \cref{2mm}. Only the first-moment condition remains: if we can show $\log\E{Z_r'|W = w} = \cT{n}$ for all $w$ in some $\WW$ with $\PP{W \in \WW|P} = \mo{1}$, then \cref{2mm} may be applied and $Z_r' \neq 0$ with high probability by \cref{pz}. \\

We now turn our attention to establishing the first-moment condition. Observe $W_i = 0$ with probability $\cOm{e^{-\lambda n}}$; this provides a trivial lower bound on the probability that $u \in \mathcal{B}$ satisfies $|(Au)_i| \le r$. If $\lambda m = \oo(1)$, 
\[
    \E{Z_r'|P} \ge 2^{n-\oo(n)} e^{-\cO{\lambda n m}} = 2^{n - \oo(n)} 
\]
This would finish the proof, so assume that $\lambda m = \cOm{1}$, and thus $\lambda n = \omega(1)$. By the superposition property for independent Poisson variables, $W_i \sim \Pois(n\lambda)$ for each $i$, and $\sum W_i \sim \Pois(nm\lambda)$. For a sufficiently large constant $C >0$, set
\[
    \WW:= \left\{w \in (2\NN)^m : \sum_{i=1}^m w_i \le \lambda m n  +C \sqrt{\lambda m n \log n}\right\} 
\]
By \cref{poisson-tail}, we have $\PP{W \in \WW|P} = \mo{1}$. Note $\sum w_i / m = n\lambda \mo{1}$ for $w \in \WW$ because $\log (n)/ m = \oo(1)$. We now bound $\E{Z_r'|W=w}$ uniformly over $w \in \WW$. Observe that if $w_i = 0$, then $|(Au)_i| = 0 \le r_i'$ with probability $1$. Returning to \eqref{eq:po-fmm-condit} and recalling the definition of $r_i'$, 
\begin{align}
    \min_{w \in \WW}\E{Z_r'|W=w,P} &= 2^{n + \oo(n)} \min_{w \in \WW} \prod_{\substack{i:~1 \le i \le m \\ w_i \neq 0}} \left(\frac{(r \wedge \sqrt{w_i})+1}{\sqrt{w_i}}\right) \nonumber \\
    &= 2^{n + \oo(n)} \min_{w \in \WW} \prod_{\substack{i:~1 \le i \le m \\ w_i \neq 0}} \left(\frac{r}{\sqrt{w_i}} \wedge 1\right) \label{eq:po-min-fmm}
\end{align}
We claim that if $r < \sqrt{w_j}$, then we may assume $w_j = 0$. Indeed, consider $w \in \WW$ with $r > \sqrt{w_j} > 0$ for some $j$. Define $w'$ as a copy of $w$ with a modification: $w_j' = 0$ and for arbitrary $k \neq j$, let $w_k' = w_k + w_j$. Then, a simple computation yields
\begin{align*}
    \prod_{\substack{i:~1 \le i \le m \\ w_i \neq 0}} \left( \frac{r}{\sqrt{w_i}} \wedge 1\right) \ge
    \left( \frac{r}{\sqrt{w_k'}} \wedge 1\right) \prod_{\substack{i:~1 \le i \le m \\ w_i \neq 0,~ i \neq j, i\neq k}} \left( \frac{r}{\sqrt{w_i}} \wedge 1\right) 
    = \prod_{\substack{i:~1 \le i \le m \\ w_i' \neq 0}} \left( \frac{r}{\sqrt{w_i'}} \wedge 1\right)
\end{align*}
Repeating this argument for each index $j$ with $r > \sqrt{w_j} > 0$, we see $\E{Z_r'|W=w,P}$ is minimized (up to a $2^{\oo(n)}$ factor), by $w$ with $w_j = 0$ or $\sqrt{w_j} > r$, for all $j$. Now, fix some positive integers $S$ and $t$. By arithmetic-geometric mean inequality, 
\begin{align*}
    \max_{\{w_i\}_{i=1}^t:~ \sum w_i = S} \prod_{i=1}^t \frac{r}{\sqrt{w_i}} &\ge \prod_{i=1}^t \frac{r}{\sqrt{S/t}} = \left(\frac{tr^2}{S}\originalright)^{t/2} =: f_S(t)
\end{align*}
We apply this to \eqref{eq:po-min-fmm} for each $w\in \WW$ by letting $t$ denote the number of non-zero indices of $w$ and $S = \sum w_i$. Using our observation about the structure of $w_j$ in the second inequality,
\begin{align*}
    \min_{w \in \WW}\E{Z_r'|W = w} \ge 2^{n + \oo(n)} \min_{w \in \WW} \prod_{\substack{1 \le i \le m \\ w_i \neq 0}} \left(\frac{r}{\sqrt{w_i}}\wedge 1\right) &\ge 2^{n + \oo(n)} \min_{w \in \WW} \prod_{\substack{1 \le i \le m \\ w_i \neq 0}} \frac{r}{\sqrt{w_i}} \ge 2^{n + \oo(n)} \min_{\substack{w \in \WW \\ S := \sum w_i}} \min_{t \in [0,m]} f_S(t)
\end{align*}
It is a simple calculus exercise that $f_S(t)$ is decreasing for $0 < t < S/(r^2e)$; increasing for $t > S/r^2e$; and has a global minimum at $S/(r^2e)$. Recall $S  = \mo{1}\lambda m n$ and $r \le \e^{-1} \sqrt{\lambda n}$. Then, $S/(r^2e) > m$, so the minimum of $f_S(t)$ on $[0,m]$ is at $t = m$. Recalling the expression for $\E{Z_r|P}$ given in \eqref{eq:po-fmm},
\begin{align*}
    \min_{w \in \WW}\E{Z_r'|W = w,~P} &\ge 2^{n + \oo(n)} f_{\lambda m n }(m) = 2^{n + \oo(n)} \left(\frac{r}{\sqrt{n\lambda}}\originalright)^m = \E{Z_r|P}e^{\oo(n)}
\end{align*}
 Since $m = \oo(n)$ and $\E{Z_r|P} = e^{\cT{n}}$, then $\E{Z_r'|W  = w} = e^{\cT{n}}$ uniformly over $w \in \WW$ and we are done.
\end{proof}

\begin{proof}[Proof of \cref{fullcurve_smm}]
Our goal is to verify that the weak and strong bounds \eqref{eq:2mm-goals-weak} and \eqref{eq:2mm-goals} hold. Fix a particular $i$; we can henceforth suppress all subscripts $i$ without ambiguity. Then, $\K = \{0,\pm 1, ..., \pm r\}$ for some $r = \cO{\sqrt{w}}$. Define the set $\K_w := \{(w+k)/2,~ k\in \K\}$; while this is a slight overload of notation, it will never be ambiguous because the subscript $i$ is suppressed for the remainder of this theorem. Finally, define the shorthand $\gamma := 1 -\beta$. For the Poisson ensemble conditioned on $(w_i)$ described in the theorem statement, and for $\phi$ and $\psi$ defined in \cref{2mm},
\begin{align*}
    \psi &= 2^{-w}\sum_{k \in \K}\binom{w}{(w+k)/2}, \\
    \phi(\beta) &= 2^{-w}\sum_{k,k' \in \K}\binom{w}{\frac{w+k}{2}} \sum_{c} \binom{\frac{w+k}{2}}{\frac{w+k'}{2}-c} \binom{\frac{w-k}{2}}{c}\beta^{2c + \frac{k-k'}{2}}\gamma^{w + \frac{k'-k}{2} -2c}
\end{align*}
We first verify the weak bound  \eqref{eq:2mm-goals-weak}.
Since we restrict to $\beta$ bounded strictly away from $0$ and $1$ by constants, the weak bound will follow easily from standard approximations for binomial coefficients (\cref{stirling,demoivre}).
Since $r =\cO{\sqrt w}$ and $|k| \leq r$ for each $k \in \K$, Stirling's approximation yields
\begin{equation*}
\psi = 2^{-w}\sum_{k \in \K}\binom{w}{(w+k)/2} = \sum_{k \in \K}\cT{\frac{1}{\sqrt w}} = \cT{|\K|w^{-1/2}}\,.
\end{equation*}
By a similar argument, making the substitution $c = \beta w /2 + j$ and using again the fact that $k, k' = \cO{\sqrt w}$,
\begin{align*}
    \phi(\beta) &:= \cO{\frac{1}{\sqrt{w}}}\sum_{k,k' \in \K} \sum_{c} \left(\binom{\frac{w+k}{2}}{\frac{w+k'}{2}-c} \beta^{\frac{k-k'}{2}+c}\gamma^{\frac{w + k'}{2} -c}\right)\left(\binom{\frac{w-k}{2}}{c}\beta^{c}\gamma^{\frac{w-k}{2}-c}\right) \\
    &\le \cO{\frac{1}{\sqrt{w}}}\sum_{k,k' \in \K} \sum_{|j| \le \cO{\sqrt{w\log w}} } e^{-\cT{j^2/(w\beta \gamma)}}\frac{1}{w\beta\gamma}\\
    &= \cO{\frac{|\K|^2}{w\beta \gamma}}
\end{align*}
Thus, if $\beta \in [\delta, 1-\delta]$, then $\phi(\beta) \le C_\delta \psi^2$. This completes the weak bound.

The strong bound requires a much finer quantitative estimate of $\phi$ and $\psi$ when $\beta$ is very close to $1/2$. Standard approximation techniques give a $\mO{w^{-1}}$ multiplicative error. Unless we restrict ourselves to the dense case by making the assumption that $m = \oo(w)$, this error is far too large. Instead, we will use Stein's method of exchangeable pairs to compute $\phi_i(\beta)$ in terms of $\psi_i$ for $\beta$ close to $1/2$.

Let us consider two balanced vectors, $u$ and $v$.
Since the entries of row $A_i$ of $A$ are i.i.d.~Poisson random variables, if we condition on $\{W= w\}$, then $A_i$ can be constructed by starting with the all-zeroes vector of length $n$, and then choosing $w$ coordinates uniformly at random ({\em with} replacement) to increment. By keeping track of whether $u$ is positive or negative in each chosen coordinate, we see $\la A_i, u \ra$ is characterized by a binomial random variable. Similarly, the pair $(\langle u, A_i\rangle,~\langle v, A_i \rangle)$ is characterized by a multinomial random variable counting how many outcomes in the construction of $A_i$ correspond to coordinates where $u$ and $v$ are both positive, both negative, or of mixed sign.

This description suggests the following construction.
We draw $w$ independent random variables from a categorical distribution with four outcomes, labeled $(+,+)$, $(+,-)$, $(-,-)$, and $(-,+)$, where we assign $\beta/2$ probability to each of the outcomes $(+,+)$ and $(-,-)$ and $\gamma/2$ probability to each of the outcomes $(+,-)$ and $(-,+)$.
We view these four outcomes as reflecting the signs of the entry of $v$ and the entry of $u$ corresponding to each selected coordinate.

Let $\sigma = (\Sa,\Sb,\Sc,\Sd)$ be the respective counts of how many outcomes of each type are observed. Then $\sigma$ has a multinomial distribution. By construction,
\[
    \la u, A_i \ra \stackrel{d}{=} \Sa + \Sd - \Sb - \Sc = w-2(\Sb+\Sc)\,.
\]
To obtain an exchangeable pair, we construct another tuple $\sigma'$ by selecting one of the $w$ outcomes uniformly at random and resampling it from the original categorical distribution. We call the joint law of $(\sigma, \sigma')$ generated by this procedure the \emph{unconditioned distribution}, which we denote by $\pp_0$.

Next, we consider a different process for generating $\sigma$, which reflects the law of $\la u, A_i \ra$ when we condition on the value of $\la v, A_i \ra$. Concretely, we group the four outcomes of our categorical random variables into two types depending on their first coordinate: $(+,+)$ and $(+,-)$ are one type and $(-,-)$ and $(-,+)$ are another type. Define the events
\[
    E_k := \left\{\Sa+\Sb = \frac{w+k}{2}\right\}, \quad E_\K := \bigcup_{k \in \K} E_k
\]
We draw $w$ independent random variables from the same categorical distribution as above, but we condition on the event $E_\K$ that the number of $(+, +)$ and $(+, -)$ outcomes is $(w+k)/2$ for some $k \in \K$.
This yields a new distribution on the tuple $\sigma = (\Ta, \Tb, \Tc, \Td)$ of counts.
To obtain an exchangeable pair, we can generate another tuple $\sigma'$ from $\sigma$ by picking one of the $w$ outcomes uniformly at random and resampling it from the categorical distribution \emph{conditioned on the outcome being of the same type}.
We call the joint law of the resulting pair the \emph{conditioned distribution}, which we denote by~$\pp_c$.

As above, if we view the outcomes as the signs of the entries of $v$ and $u$ corresponding to each selected coordinate, then under $\pp_c$
 \[
   \Ta + \Td - \Tb - \Tc = w-2(\Tb+\Tc)  \stackrel{d}{=} \left\{\la u , A_i \ra ~\big|~ \la v, A_i \ra \in \K \right\}
 \]

We focus on the quantity $S := S(\sigma) = \Sb + \Sc$ under the conditioned and unconditioned distributions.
We have by construction:
\begin{align}
\begin{split}\label{eq:po-stein-implies-quadUB}
    &\pp_0\left[S\in \K_w\right] = \psi = \pp_0[E_\K], \quad \pp_c\left[S \in \K_w \right]\psi = \phi(\beta) 
\end{split}
\end{align}

Thus, we want to show that the probability that $S \in \K_w$ is close under $\pp_0$ and $\pp_c$.
We will use Stein's method to compare these probabilities.
As in~\eqref{eq:def-stein-op},  define the Stein operators
\begin{equation*}
T_0 f(\sigma) = \E[0]{\antisym f(S(\sigma), S(\sigma'))|\sigma}\,, \quad T_c f(\sigma) = \E[c]{\antisym f(S(\sigma), S(\sigma'))|\sigma}\,.
\end{equation*}
Writing $\mu_0$ and $\mu_c$ for the probability measures on $\mathbb N$ induced by $S$ under $\pp_0$ and $\pp_c$,
we find a function $f$ for which $T_0 f(\sigma) = \ind_{S(\sigma) \in \K_w} - \mu_0(\K_w)$, and then we compute
\begin{equation*}
\E[c]{(T_c - T_0)f(S)} = - \E[c]{T_0 f(S)} = \mu_0(\K_w) - \mu_c(\K_w)\,.
\end{equation*}
Carrying out these constructions by means of \cref{stein-inv}, we obtain the following result whose proof is deferred to the next section.
\begin{lemma}\label{po-f-bound}
Define $\Delta$ as the one-step difference operator, $\Delta f(s) := f(s+1)-f(s)$. There exists a function $f$ satisfying
\begin{equation}\label{eq:po-final-bd}
    \mu_0(\K_w) - \mu_c(\K_w) = \E[c]{(\gamma-\beta)\left(\frac{\Sc-\Sb}{2}\right)\Delta f(S)}
\end{equation}
with the property:
\begin{equation}\label{eq:po-stein-inv-L1}
    \sum_{s = 0}^w |\Delta f(s)| = \cO{|\K|w^{-1}}
\end{equation}
\end{lemma}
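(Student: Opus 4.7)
My plan is to build $f$ by summing single-point Stein solutions from Lemma \ref{stein-inv}. For each $t \in \K_w$ I would invoke that lemma with $a_s = (w-s)/2$ and $b_s = s/2$, a choice for which the stationary distribution is $\mathrm{Bin}(w, 1/2)$, matching the marginal of $S$ under $\pp_0$ (since each of the $w$ resampled coordinates has its $u$-bit uniformly distributed on $\{\pm 1\}$). This yields an $f_t$ solving $T_0 f_t(s) = \ind\{s=t\} - \mu_0(\{t\})$, and setting $f := \sum_{t \in \K_w} f_t$ gives a single Stein solution with $T_0 f = \ind_{\K_w} - \mu_0(\K_w)$.

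The $L^1$ bound \eqref{eq:po-stein-inv-L1} then drops out of parts (c)--(d) of Lemma \ref{stein-inv}. Because $|k| \le r = \cO{\sqrt{w}}$ for every $k \in \K$, the set $\K_w$ is contained in a window of radius $\cO{\sqrt{w}}$ around $w/2$, so $a_t, b_t = \cT{w}$ for every $t \in \K_w$. Part (c) gives $|\Delta f_t(t)| \le \min(a_t^{-1}, b_t^{-1}) = \cO{1/w}$, and part (d) then yields $\sum_s |\Delta f_t(s)| = \cO{1/w}$; summing over the $|\K| = |\K_w|$ contributions produces $\sum_s |\Delta f(s)| = \cO{|\K|/w}$.

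For the identity I would read off the one-step transitions of the two chains on $S$: the unconditioned chain has $\pp_0[S' = S+1 \mid \sigma] = (w-S)/(2w)$ and $\pp_0[S' = S-1 \mid \sigma] = S/(2w)$, while the conditioned chain satisfies $\pp_c[S' = S+1 \mid \sigma] = (\gamma \Sa + \beta \Sd)/w$ and $\pp_c[S' = S-1 \mid \sigma] = (\beta \Sb + \gamma \Sc)/w$. Using $\Sa + \Sd = w - S$, $\Sb + \Sc = S$, and $\tfrac{1}{2} - \gamma = (\beta - \gamma)/2$, a short algebraic collapse gives
\[
(T_0 - T_c)f(\sigma) \;=\; \frac{\gamma - \beta}{2}\bigl[(\Sd - \Sa) f(S+1) + (\Sc - \Sb) f(S)\bigr],
\]
and expanding $f(S+1) = f(S) + \Delta f(S)$ together with the identity $(\Sd - \Sa) + (\Sc - \Sb) = -k$ for $k = \langle v, A_i \rangle$ rewrites this as $(\gamma - \beta)/2 \cdot [-k f(S+1) - (\Sc - \Sb) \Delta f(S)]$. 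Combined with $T_0 f = \ind_{\K_w} - \mu_0(\K_w)$ by construction and $\E[c]{T_c f} = 0$ from the exchangeability of $(\sigma, \sigma')$ under $\pp_c$, taking $\E[c]{\cdot}$ produces $\mu_0(\K_w) - \mu_c(\K_w) = \E[c]{(T_c - T_0) f}$, which upon substitution gives~\eqref{eq:po-final-bd}.

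The delicate step---what I expect to be the main obstacle---is extracting the stated form from the expression above, that is, showing the cross-term $\E[c]{k f(S+1)}$ can be absorbed into the $(\Sc - \Sb) \Delta f(S)$ contribution. I would leverage the symmetry $\tau:(\Sa,\Sb,\Sc,\Sd) \mapsto (\Sc,\Sd,\Sa,\Sb)$, which preserves $\pp_c$ (since $\K = -\K$) and sends $(k, S) \mapsto (-k, w-S)$, together with the antisymmetry $f(s) = -f(w+1-s)$ that $f$ inherits from the $w/2$-symmetries of $\mu_0$ and $\K_w$ (and that forces $\Delta f(w - s) = \Delta f(s)$). Applied carefully, these two symmetries reduce $\E[c]{k f(S+1)}$ to a sum over pairs $\{t, w-t\} \subseteq \K_w$ whose contributions cancel, leaving precisely the identity as written.
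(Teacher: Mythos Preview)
Your approach is essentially the paper's: the same construction $f=\sum_{t\in\K_w}f_t$ from Lemma~\ref{stein-inv} with $a_s=(w-s)/2$, $b_s=s/2$; the same $L^1$ bound via parts (c)--(d); and the same computation of $(T_c-T_0)f$, isolating the $(\Sc-\Sb)\,\Delta f(S)$ term plus a cross-term in $k\,f(S+1)$.

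The one divergence is how the cross-term is eliminated. The paper does this through the marginal symmetry of $k$: since $\Sa+\Sb\sim\Bin(w,1/2)$ under $\pp_0$, and $E_\K$ is a symmetric $(\Sa+\Sb)$-measurable event, one has $\pp_c[E_k\mid E_\K]=\pp_c[E_{-k}\mid E_\K]$; the paper then pairs the $\pm k$ contributions after conditioning on $E_k$. Your proposed route---the involution $\tau$ together with the antisymmetry $f(s)=-f(w+1-s)$---does not close as sketched. Composing the two yields
\[
\E[c]{k\,f(S+1)} \;=\; -\E[c]{k\,f(w+1-S)} \;=\; \E[c]{k\,f(S)},
\]
i.e.\ only $\E[c]{k\,\Delta f(S)}=0$, not the vanishing of $\E[c]{k\,f(S+1)}$ itself. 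Decomposing $f=\sum_t f_t$ and pairing $t\leftrightarrow w-t$ adds nothing new: the identity $f_t(w+1-s)=-f_{w-t}(s)$ simply reproduces the same relation at the level of the sum. So the ``applied carefully'' clause is hiding a genuine gap; for this step you should follow the paper and use the marginal law of $\Sa+\Sb$ directly.
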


It remains to bound \eqref{eq:po-final-bd}.
Let $x = \beta - 1/2$.
We establish the following proposition.
\begin{proposition}\label{po-with-indicator-bd} 
Uniformly over $s$,
\begin{equation*}
|\E[c]{(\Sc-\Sb)\ind_{S = s}}| = \cO{xw^{1/2}}\cEws\,.
\end{equation*}
\end{proposition}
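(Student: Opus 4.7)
The plan is to prove the proposition by conditioning on the statistic $M := \Sa + \Sb$ (the total count of draws with $v = +$) and exploiting a symmetry of its conditional law under $\pp_c$. A crucial observation is that, under $\pp_0$, each draw has $\Pr[v = +] = \beta/2 + \gamma/2 = 1/2$ independent of $x$, so $M \sim \Bin(w, 1/2)$, which is symmetric about $w/2$. Since the conditioning set $\{(w+k)/2 : k \in \K\}$ is also symmetric about $w/2$ (as $\K$ is symmetric about $0$), the distribution of $M$ under $\pp_c$ remains symmetric about $w/2$. This symmetry will kill all odd-in-$(M - w/2)$ contributions.

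\textbf{Conditional decomposition.} Conditionally on $M = M_0$, I can parameterize $\Sa \sim \Bin(M_0, \beta)$ and $\Sd \sim \Bin(w - M_0, \gamma)$ independently, with $\Sb = M_0 - \Sa$ and $\Sc = (w - M_0) - \Sd$. This gives $\Sc - \Sb = (w - 2M_0) + (\Sa - \Sd)$ and $\ind_{S = s} = \ind_{\Sa + \Sd = w - s}$, so that
\begin{equation*}
    \E[c]{(\Sc - \Sb) \ind_{S = s}} = \E[c]{(w - 2M)\, G(M) + H(M)},
\end{equation*}
where $G(M_0) := \Pr[\Sa + \Sd = w - s \mid M = M_0]$ and $H(M_0) := \E[(\Sa - \Sd)\ind_{\Sa + \Sd = w - s} \mid M = M_0]$. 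Setting $t := M - w/2$ and splitting $G, H$ into even and odd parts $G_e, G_o, H_e, H_o$ in $t$, the symmetry of $\pp_c|_M$ reduces this to
\begin{equation*}
    \E[c]{(\Sc - \Sb)\ind_{S=s}} = -2\E[c]{t\, G_o(t)} + \E[c]{H_e(t)}.
\end{equation*}

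\textbf{Local CLT estimates.} Next, I invoke a local CLT for the binomial convolution $\Sa + \Sd$ given $M = w/2 + t$, which has mean $w/2 + 2xt$ and variance $\cT{w}$ uniformly over the relevant range of $t$. This yields $G(w/2) = \cT{w^{-1/2}} \cEws$ and $G_o(t) = G(w/2) \cdot \cT{xt(w/2-s)/w}$ to leading order. The dominant contribution will come from $H$: at $t = 0$, the Gaussian regression formula gives $\E[\Sa - \Sd \mid \Sa + \Sd = w - s, M = w/2] = w\beta - w/2 = wx$, so $H(w/2) = wx \cdot G(w/2) = \cT{x\sqrt{w}} \cEws$. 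A similar Taylor expansion will establish $H_e(t) - H(w/2) = \cO{xt^2/w} \cdot G(w/2)$.

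\textbf{Completing the bound.} Finally, using $\E[c]{t^2} \leq r^2/4 = \cO{w}$ (since $|t| \leq r/2 = \cO{\sqrt{w}}$ under $\pp_c$) together with the elementary tail bounds $(w/2 - s)^2 \cEws = \cO{w}$ and $|w/2 - s| \cEws = \cO{\sqrt{w}}$, I obtain $|\E[c]{t\, G_o(t)}| = \cO{x} \cEws$ and $|\E[c]{H_e(t)}| = \cO{x\sqrt{w}} \cEws$, summing to the claimed bound. \textbf{The main obstacle} I anticipate is controlling the higher-order corrections in the LCLT/Edgeworth expansions of $G$ and $H$ uniformly in $t$ over the support of $\pp_c|_M$, verifying that the dominant term is precisely $H(w/2) = wx\, G(w/2)$ and that all other contributions are strictly subleading. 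Parity constraints restricting $(w+k)/2 \in \mathbb{Z}$ trim the effective support but do not disturb symmetry about $w/2$.
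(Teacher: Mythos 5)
Your reparameterization is correct and the strategy is sound, but it is a genuinely different organization from the paper's argument, so let me compare. The paper works directly with the joint density $G(k,c) = \pp_c[\Sc = c,\, \Sb = s-c \mid E_k]$, written as a product of two binomial densities, and proceeds by a three-case analysis on $|x|$ relative to $w^{-1}$ and $w^{-1/2}$: crude Gaussian tail bounds when $|x|\ge w^{-1/2}$, a first-order ratio expansion when $|x|\le w^{-1}$, and in the hard middle regime a pairing of $G(k,c)$ with $G(-k, s+2[xs]-c)$ controlled by a second-order Stirling expansion (\cref{G-expansion}). Your version instead writes $\Sc-\Sb = (w-2M) + (\Sa-\Sd)$ with $M=\Sa+\Sb$ and isolates the dominant term as the tilted conditional mean $\E{\Sa-\Sd \mid \Sa+\Sd=w-s}\approx wx$ times the local probability $G(w/2)=\cT{w^{-1/2}}\cEws$; both proofs ultimately rest on the same symmetry (your symmetry of $M$ about $w/2$ is exactly the paper's $\pp_c[E_k]=\pp_c[E_{-k}]$, used to kill the odd parts). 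What your route buys is transparency: it explains \emph{why} the answer is $\cO{xw^{1/2}}\cEws$ (the tilt of the regression mean), which the paper's bound-everything approach obscures. What it defers is essentially all of the paper's technical content: the claims $G_o(t)=G(w/2)\cdot\cT{xt(w/2-s)/w}$ and $H_e(t)-H(w/2)=\cO{xt^2/w}\,G(w/2)$ are uniform second-order local-limit estimates for a binomial convolution and a Fisher-type tilted hypergeometric, with error terms that must either carry a factor of $x$ or be $\cO{w^{-1/2}}$ and must hold uniformly over all $s$ (including $|s-w/2|\gg\sqrt{w}$, where the linearization of the odd part fails and must be absorbed into the Gaussian prefactor by halving the exponent's constant). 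Making these rigorous will very likely force you back into the same regime-splitting on $|x|$ that the paper performs --- in particular, for $|x|\ge w^{-1/2}$ the Taylor expansions you invoke are no longer valid termwise and one must revert to crude bounds, which is precisely the paper's Case 1 --- so you have correctly identified, but not discharged, the crux of the proof.
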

The proof of \cref{po-with-indicator-bd} is the most involved part of the theorem.
This task, though technical, is significantly simplified by the fact that it suffices to estimate the quantity in question to \emph{constant} multiplicative error, whereas our original goal required estimating $\phi_i(\beta)$ to error $1 + \co{\frac 1m}$.
Before proving \cref{po-with-indicator-bd}, we first show how it implies the strong bound. Applying \cref{po-with-indicator-bd} to~\eqref{eq:po-final-bd} yields
\begin{align*}
    |\mu_0(\K_w) - \mu_c(\K_w)| &= x \left| \E[c]{(\Sc-\Sb)\Delta f(S)}\right| \\
    & = x \left|\sum_{s = 0}^w \E[c]{(\Sc-\Sb)\ind_{S = s}} \Delta f(s)\right| \\
    & \leq \cO{x^2 w^{1/2}} \sum_{s=0}^w \cEws |\Delta f(s)| \\
    & \leq \cO{x^2 w^{1/2}} \sum_{s = 0}^w |\Delta f(s)| \\
    & = \cO{x^2 |\K| w^{-1/2}}\,,
    \end{align*}
where the last step follows from~\eqref{eq:po-stein-inv-L1}.

Since $S$ under $\pp_0$ has distribution $\Bin(w, 1/2)$, the de Moivre--Laplace theorem (\cref{demoivre}) implies that if $k = \cO{\sqrt{w}}$, then $\mu_0(\{(w+k)/2\}) = \cT{\mu_0(\{w/2\})}$. In particular, this holds for all $k \in \K$. Thus, $\psi = \mu_0(\K_w) = \cT{|\K|w^{-1/2}}$. Rearranging yields $|\K| = \cT{ \mu_0(\K_w)\sqrt{w}}$. So,
\[
    |\mu_c(\K_w) - \mu_0(\K_w)| = \cO{x^2 \mu_0(\K_w)} = \cO{x^2 \psi}
\]
The last equality follows from the identity \eqref{eq:po-stein-implies-quadUB}. Using \eqref{eq:po-stein-implies-quadUB} again yields the strong bound \eqref{eq:2mm-goals} for some universal positive constant $C$.
\[
    \phi\left(\frac{1}{2}+x\right) = \psi \mu_c(\K_w) = (1+C x^2) \psi^2
\]
We have now shown all the assumptions of Lemma \ref{2mm} are satisfied. Thus the second moment method succeeds for the Poisson ensemble conditioned on $\{W = w\}$, proving \cref{fullcurve_smm}. 
\end{proof}

It remains to prove \cref{po-with-indicator-bd}, modulo some technical lemmas which we defer to the following section.

\begin{proof}[Proof of \cref{po-with-indicator-bd}]
Let $G(k,c) := \pp_c[\Sc = c, S = s | E_k] = \pp_c[\Sc = c, \Sb = s - c | E_k]$. 
Conditioned on $E_k$, $\Sc$ and $\Sb$ are independent, with $\Sb$ having distribution $\Bin((w+k)/2, \gamma)$ and $\Sc$ having distribution $\Bin((w-k)/2, \beta)$.
We can therefore write $G(k, c)$ as a product of binomial densities:
\[
    G(k,c) = \binom{(w+k)/2}{s-c}\binom{(w-k)/2 }{c} \beta^{\frac{w+k}{2} + 2c - s} \gamma^{\frac{w-k}{2} - 2c + s}
\]
Fix $s$ with $0 \le s \le w$. Our goal is to prove uniformly for $k \in \K$:
\begin{equation}\label{eq:po-E(c-b)-ind_t}
    \E[c]{(\Sc-\Sb)\ind_{S = s}| E_k} = \sum_{c = 0}^{s} (2c-s)G(k,c) = \cO{xw^{1/2}}\cEws\,,
\end{equation}
The first equality is by definition; the second is the claim that directly yields the proposition after averaging over $k \in \K$. We begin with a crude approximation for $G$. By standard binomial inequalities:
\begin{lemma}\label{po-G-tail}
    Uniformly over $c$,
    \begin{equation*}
        G(k,c) 
        =\cO{ \frac{1}{w}} \cE{-\cT{\frac{(s - w/2)^2 + (c - \beta s)^2}{w}}} 
    \end{equation*}
\end{lemma}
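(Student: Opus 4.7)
The plan is to exploit the conditional independence built into the definition of $G(k,c)$, factor it as a product of two binomial densities, and apply a standard pointwise upper bound to each factor; the remaining work is an algebraic reconciliation between the resulting exponent and the target expression $(s-w/2)^2 + (c-\beta s)^2$.

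Concretely, under $\pp_c[\,\cdot \mid E_k]$ the random variables $\Sb$ and $\Sc$ are independent, with $\Sb \sim \Bin((w+k)/2, \gamma)$ and $\Sc \sim \Bin((w-k)/2, \beta)$, so
\[
G(k,c) \;=\; \pp_c[\Sb = s-c \mid E_k] \cdot \pp_c[\Sc = c \mid E_k].
\]
In the strong-bound regime $\beta, \gamma$ are bounded away from $0$ and $1$, and $|k| \le r = \cO{\sqrt w}$, so both binomials have variance $\cT{w}$. A routine pointwise bound on binomial densities (de Moivre--Laplace in the bulk combined with Chernoff bounds in the tails, both available from \cref{demoivre}) gives $\pp[\Bin(N, p) = j] \le (C/\sqrt w)\exp(-c(j - Np)^2/w)$ for each factor, yielding
\[
G(k,c) \;\le\; \frac{C''}{w}\exp\left(-c'\,\frac{a^2 + b^2}{w}\right), \qquad a := (s-c) - (w+k)\gamma/2, \quad b := c - (w-k)\beta/2.
\]

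Finally, writing $\tilde s := s - w/2$ and $\tilde c := c - \beta s$ and substituting, a direct computation gives $a = \gamma\tilde s - \tilde c - \gamma k/2$ and $b = \beta\tilde s + \tilde c + \beta k/2$. Inverting this linear system (using $\beta + \gamma = 1$ and $\beta - \gamma = 2x$) yields $\tilde s = a + b - xk$ and $\tilde c = \gamma b - \beta a - \beta\gamma k$, so Cauchy--Schwarz gives $\tilde s^2 + \tilde c^2 = \cO{a^2 + b^2} + \cO{k^2}$. The one mildly delicate step is absorbing the $k$-dependent offsets: since the hypothesis $r = \cO{\sqrt w}$ forces $k^2/w = \cO{1}$, this correction lives inside the multiplicative constant of the prefactor, and the claim $G(k,c) = \cO{1/w}\cE{-\cT{((s-w/2)^2 + (c-\beta s)^2)/w}}$ follows.
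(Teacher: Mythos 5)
Your proposal is correct and follows essentially the same route as the paper: factor $G(k,c)$ into the two independent binomial densities, apply the pointwise Gaussian bound (the paper's \cref{cramer}) to each factor, and then convert the resulting exponent $a^2+b^2$ into $(s-w/2)^2+(c-\beta s)^2$ using $k=\cO{\sqrt w}$. The only cosmetic difference is in that last step, where you invert the linear map $(a,b)\mapsto(\tilde s,\tilde c)$ and absorb the $\cO{k^2/w}=\cO{1}$ offset into the constant, while the paper instead splits into cases according to the relative sizes of $|s-w/2|$, $|c-\beta s|$, and $w^{1/2}$; your version is, if anything, slightly cleaner.
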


We will employ the following basic fact about Gaussian sums, which follows immediately upon comparison with a Gaussian integral.
\begin{lemma}\label{gaussian-sum}
Let $q \geq 0$, $r \in \RR$, and $w \geq 1$. For some implicit constant depending only on $q$,
\begin{align*}
     \sum_{y \in \ZZ} |y-r|^q e^{-(y-r)^2/w} &= \cO{w^{(q+1)/2}}, 
\end{align*}
\end{lemma}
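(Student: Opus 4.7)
The plan is to reduce the discrete sum to the Gaussian moment integral $I \defeq \int_{\RR} |t-r|^q e^{-(t-r)^2/w} \, dt$ via a sum-integral comparison, then evaluate $I$ by a change of variables. Setting $g(t) \defeq |t-r|^q e^{-(t-r)^2/w}$ and applying the substitution $u = (t-r)/\sqrt{w}$ yields
\[
    I = w^{(q+1)/2} \int_{\RR} |u|^q e^{-u^2} \, du = C_q w^{(q+1)/2}
\]
for a finite constant $C_q$ depending only on $q$. It therefore suffices to bound the discretization error $|\sum_{y \in \ZZ} g(y) - I|$ by something of order $\cO{w^{(q+1)/2}}$.

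For this, I will exploit the fact that $g$ is piecewise monotone with a bounded number of monotone pieces. By the symmetry $g(r+t) = g(r-t)$ it suffices to analyze $g$ on $[r,\infty)$: a direct derivative computation shows that on this half-line, $g$ increases on $[r, r+\sqrt{qw/2}]$ and decreases on $[r+\sqrt{qw/2}, \infty)$ (with only the latter, decreasing, piece when $q = 0$). Hence $\RR$ is the union of at most four intervals on which $g$ is monotone. On each such interval the standard estimate $|g(y) - \int_{y}^{y+1} g(t) \, dt| \leq |g(y+1) - g(y)|$ telescopes to a total error of at most $\|g\|_\infty$, so
\[
    \left| \sum_{y \in \ZZ} g(y) - I \right| \leq 4 \|g\|_\infty + \cO{\|g\|_\infty},
\]
where the extra term accounts for the (at most four) possibly non-integer endpoints of the monotone pieces.

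Finally, the peak value of $g$ is attained at $t = r \pm \sqrt{qw/2}$ and equals $(qw/2)^{q/2} e^{-q/2} = \cO{w^{q/2}}$ for $q > 0$; for $q = 0$ we trivially have $\|g\|_\infty = 1$. In either case $\|g\|_\infty = \cO{w^{q/2}}$, which, since $w \geq 1$, is $\cO{w^{(q+1)/2}}$. Combining with the previous display gives $\sum_{y \in \ZZ} g(y) \leq I + \cO{w^{q/2}} = \cO{w^{(q+1)/2}}$, as claimed. The argument is entirely routine, and I do not anticipate any technical obstacle; the only mild subtlety is keeping track of the at-most-four non-integer endpoints in the piecewise-monotone comparison, which only contribute an $\cO{\|g\|_\infty}$ additive error that is already absorbed into the final bound.
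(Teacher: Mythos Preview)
Your proof is correct and follows the same idea the paper gestures at: the paper simply states that the lemma ``follows immediately upon comparison with a Gaussian integral'' and gives no further detail, so your explicit sum--integral comparison via piecewise monotonicity is exactly the intended argument, written out in full.
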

We consider three cases separately, depending on the size of $x$. \\

\paragraph{\textbf{Case 1}: $|x| \ge w^{-1/2}$}
Here, \Cref{po-G-tail} suffices. Indeed, also applying \cref{gaussian-sum} yields
\begin{align*}
    \left|\E[c]{(\Sc-\Sb)\ind_{S = s}| E_k}\right| &= \cO{\frac{1}{w}}\sum_{c=0}^s (2c-s )\e^{-\cT{\frac 1 w}((c - \beta s)^2 + (s-w/2)^2)} \\
    &=  \cO{\frac 1 w}\Big(2\sum_{c = 0}^s(|xs| + |c - \beta s|) \e^{-\cT{\frac 1 w}(c - \beta s)^2} \Big) \cEws \\
    &=  \left[\cO{xw^{1/2}} +  \cO{1} \right] \cEws 
    \\ &= \cO{xw^{1/2}}\cEws\,,
\end{align*}
where the last step uses the assumption that $|x| \geq w^{-1/2}$.
Averaging over $k \in \K$ completes the proposition in the case $x \ge w^{-1/2}$. \\

\paragraph{\textbf{Case 2}: $|x| \leq w^{-1}$}

For this case, we need to exploit a symmetry. We have assumed that $k \in \K$ implies $-k \in \K$. And, since $S \stackrel{d}{=} w - S$, we also have $\pp_c[E_k] = \pp_c[E_{-k}]$.
Thus:
\begin{align}
    \left|\E[c]{(\Sc-\Sb)\ind_{S = s}}\right| &\leq \sum_{k \ge 0}\left|\E[c]{(\Sc-\Sb)\ind_{S = s}| E_k} + \E[c]{(\Sc-\Sb)\ind_{S = s}| E_{-k}}\right| \pp_c[E_k] \nonumber\\
    &=\sum_{k \ge 0}\pp_c[E_k] \sum_{c = 0}^s|2c-s|\left|G(k,c) - G(-k,s-c)\right| \label{eq:po-G-diff}
\end{align}

The case when $|x| \le w^{-1}$ now follows from easy arguments. We have: 
\begin{align*} 
    G(k,c) - G(-k,s-c) &= \binom{\frac{w+k}{2}}{s-c}\binom{\frac{w-k}{2}}{c}\left(\beta^{\frac{w-k}{2}+2c-s}\gamma^{\frac{w+k}{2}-2c+s} - \beta^{\frac{w+k}{2}-2c+s}\gamma^{\frac{w-k}{2}+2c-s}\right)\\ 
    &= G(k,c)\left(1 - \originalleft(\frac{\beta}{\gamma}\originalright)^{-k -4c+2s}\right) 
\end{align*}
Since $x = \cO{w^{-1}}$, note that $x(k + 4c - 2s) = \cO{1}$. Hence, observing $\beta/\gamma = 1 + \cO{x}$ yields
\begin{equation*}
\left(1 - \originalleft(\frac{\beta}{\gamma}\originalright)^{-k -4c+2s}\right) = \cO{x(k + 4c - 2s)}\,.
\end{equation*}
In total, combining this with \cref{po-G-tail}, we obtain
\begin{align*}
    \left|G(k,c) - G(-k,s-c)\right| &=\cO{x(k + 4c - 2s)}\cO{\frac{1}{w}} \cE{-\cT{\frac{1}{w}}\left[(s-w/2)^2 + (\beta s - c)^2\right]}\,.
\end{align*}
Recall $x := \beta - 1/2$. Since $x = \cO{w^{-1}}$ by assumption, $|2c-s| \leq |2c - 2 \beta s| + 1$. Applying \cref{gaussian-sum},
\begin{align*}
\sum_{c = 0}^s|2c-s|\left|G(k,c) - G(-k,s-c)\right|
    &= \cO{\frac{x}{w}}\sum_{c = 0}^{s} (|k||2c - s|  + |2c -s|^2) \e^{-\cT{\frac{1}{w}}[(s-w/2)^2 + (\beta s - c)^2]} \\
    &= \cO{\frac{x}{w}}\sum_{c =0}^s (1 + |k||c - \beta s| + |c - \beta s|^2) \e^{-\cT{\frac{1}{w}}[(s-w/2)^2 + (\beta s - c)^2]}  \\
    &= \cO{xw^{1/2}}\cEws
\end{align*}
Since this holds uniformly for $k \in \K$, returning to \eqref{eq:po-G-diff} completes the proposition for $x \le w^{-1}$. \\

\paragraph{\textbf{Case 3}: $w^{-1} \leq |x| \leq w^{-1/2}$}
For this case, we again employ a symmetrized expression.
Fix a $k \geq 0$.
As in~\eqref{eq:po-G-diff}, we have
\begin{equation*}
    \left|\E[c]{(\Sc-\Sb)\ind_{S = s}}\right| \leq \sum_{k \ge 0}\left|\E[c]{(\Sc-\Sb)\ind_{S = s}| E_k} + \E[c]{(\Sc-\Sb)\ind_{S = s}| E_{-k}}\right| \pp_c[E_k] 
\end{equation*}
First, note that we can assume that $|s - w/2| = \cO{\sqrt{w \log w}}$.
Indeed, by \cref{po-G-tail},
\begin{align*}
|\E[c]{(\Sc-\Sb)\ind_{S = s}| E_k} + \E[c]{(\Sc-\Sb)\ind_{S = s}| E_{-k}}| & \leq \sum_{c=0}^{s} |2c-s|\left|G( k, c) - G( -k, s-c)\right| \\
& \leq \cO{w}\sum_{c=0}^{s} G( k, c) + G( -k, s-c)\\
& = \cO{w^{1/2}} \, \e^{-\frac{C (s - w/2)^2}{w}}
\end{align*}
for some positive constant $C$; if $|s - w/2| \geq C' \sqrt{w \log w}$ for a sufficiently large positive constant $C'$, then
\begin{equation*}
\e^{- \frac{C (s - w/2)^2}{w}} \leq w^{-1} \e^{- \frac{C (s - w/2)^2}{2w}} = \cO{x}\e^{- \frac{C (s - w/2)^2}{2w}}\,.
\end{equation*}
Therefore, if $|s - w/2| \geq C' \sqrt{w \log w}$, we already have
\begin{equation*}
\sum_{c = 0}^s|2c-s|\left|G(k,c) - G(-k,s-c)\right| = \cO{x w^{1/2}} \cEws\,,
\end{equation*}
which is the desired bound.
We therefore assume in what follows that $|s - w/2| = \cO{\sqrt{w \log w}}$.

For this case, we develop a slightly different symmetrized expression based on~\eqref{eq:po-G-diff}. Fix some $k \ge 0$.
Writing $[xs]$ for the nearest integer to $xs$, we have
\begin{align*}
\E[c]{(\Sc-\Sb)\ind_{S = s}| E_k} + \E[c]{(\Sc-\Sb)\ind_{S = s}| E_{-k}} & = \sum_{c = 0}^s (2c - s) G(k, c) + \sum_{c = 0}^s (2c - s)  G(-k, c) \\
& = \sum_{c \in \ZZ} (2c - s) G(k, c) + \sum_{c \in \ZZ} (s + 4[xs] - 2c) G(-k, s+2[xs] - c) \\
& = \sum_{c \in \ZZ} (2c - 2\beta s) (G(k, c) - G(-k, s+2[xs] - c)) \\
& \quad + 2xs \sum_{c \in \ZZ} (G(k, c) - G(-k, s+2[xs] - c)) \\
& \quad + 4[xs] \sum_{c \in \ZZ}G(-k, s+2[xs] - c)\,.
\end{align*}

We first claim that the last two sums are small enough.
Indeed, using \cref{po-G-tail,gaussian-sum}, we see that both terms are bounded by
\begin{equation*}
\cO{\frac{xs}{w}} \sum_{c \in \ZZ} \e^{- \cT{\frac 1w}[(s - w/2)^2 + (c - \beta s)^2]} = \cO{xw^{1/2}}\cEws\,,
\end{equation*}
which is of the desired size.
Moreover, we further claim that
\begin{multline*}
\sum_{c \in \ZZ} (2c - 2\beta s) (G(k, c) - G(-k, s+2[xs] - c)) = \sum_{|c - \beta s| = \cO{\sqrt{w \log w}}} (2c - 2\beta s) (G(k, c) - G(-k, s+2[xs] - c)) \\ + \cO{xw^{1/2}}\cEws\,.
\end{multline*}
This truncation is valid because \cref{po-G-tail} guarantees that there exists a positive constant $C$ such that the portion of the sum outside the range $|c - \beta s| \leq C \sqrt{w \log w}$ contributes at most
\begin{equation*}
w^{-1/2} \cEws
\end{equation*}
to the sum; since we have assumed that $w^{-1} \leq |x|$, this error is also of size $\cO{xw^{1/2}}\cEws$.

Combining the above bounds, we obtain that
\begin{multline}\label{final_case_3}
|\E[c]{(\Sc-\Sb)\ind_{S = s}}| = \sum_{k \geq 0} \pp_c[E_k] \sum_{|c - \beta s| = \cO{\sqrt{w \log w}}} (2c - 2\beta s) (G(k, c) - G(-k, s+2[xs] - c)) \\ + \cO{xw^{1/2}}\cEws\,,
\end{multline}
so it suffices to obtain an accurate approximation of the first sum under the restriction that both $|s - w/2|$ and $|c - \beta s|$ are $\cO{\sqrt{w \log w}}$.
We use the following refinement of \cref{po-G-tail}.
\begin{lemma}\label{G-expansion}
    Define $\dd := s - \frac{w}{2}$ and $j \defeq c - \beta s$. Let $|j| = \cO{ \sqrt{w \log w}}$, $|\dd| = \cO{\sqrt{w\log w}}$, and $k \in \K$. For $x \le w^{-1/2}$, 
    \begin{equation}\label{eq:po-G-expansion}
        G(k, c) = (1+ \Ej(j,\dd)) \frac{1}{\pi\beta\gamma w}\cE{-\frac{\dd^2 + 2j^2}{\beta\gamma w }}
    \end{equation}
    where $\Ej(j,\dd)$ denotes a quantity satisfying
    \begin{equation*}
                \Ej(j,\dd) = \cO{\frac{1}{w^{1/2}}} + \cO{\frac{(|j| \vee |\dd|)^3}{w^2}} 
    \end{equation*}
\end{lemma}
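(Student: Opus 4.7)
The plan is to derive the expansion by a refined Stirling/local CLT analysis of the two binomial factors that make up $G(k,c)$, followed by careful combination. Recalling that conditioning on $E_k$ makes $\Sb$ and $\Sc$ independent, with $\Sb \sim \Bin((w+k)/2, \gamma)$ and $\Sc \sim \Bin((w-k)/2, \beta)$, we have the exact factorization
\[
    G(k,c) \;=\; \PP[]{\Sc = c}\,\PP[]{\Sb = s-c}.
\]
Each factor will be expanded via the following quantitative local CLT: for $X \sim \Bin(N, p)$ with $N = \cT{w}$, $p = \tfrac{1}{2} + \cO{w^{-1/2}}$, and at a point $m$ with $|m - Np| = \cO{\sqrt{w \log w}}$,
\[
    \PP{X = m} \;=\; \frac{1}{\sqrt{2\pi N p(1-p)}}\,\cE{-\frac{(m-Np)^2}{2 N p (1-p)}}\,\Bigl(1 + \cO{N^{-1}} + \cO{|m - Np|^{3}/N^{2}}\Bigr).
\]
This expansion is a standard consequence of Stirling's formula for $\binom{N}{m}$ combined with a cubic Taylor expansion of $N \cdot \mathrm{KL}(m/N \,\|\, p)$ around its minimizer $m/N = p$; the cubic Taylor remainder produces the $|m - Np|^{3}/N^{2}$ term, while the $\cO{N^{-1}}$ comes from the Stirling correction.

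Applying the expansion to both factors, the combined prefactor is $(2\pi\beta\gamma\sqrt{N_a N_b})^{-1}$ where $N_a = (w+k)/2$, $N_b = (w-k)/2$. Using $\sqrt{N_a N_b} = (w/2)\sqrt{1 - k^2/w^2}$ with $|k| = \cO{\sqrt{w}}$, this equals $(\pi\beta\gamma w)^{-1}\mO{w^{-1}}$, matching the claimed prefactor. For the exponent, the two quadratic forms add and must be re-expressed in the variables $(\dd, j)$. Writing the binomial deviations as $y_b = c - N_b \beta = \beta(\dd + k/2) + j$ and $y_a = (s-c) - N_a\gamma = \gamma(\dd - k/2) - j$, and expanding $1/N_a = (2/w)(1 - k/w + \cO{k^2/w^2})$ and analogously for $1/N_b$, together with $\beta = \tfrac{1}{2} + x$, $\gamma = \tfrac{1}{2} - x$ with $|x| \leq w^{-1/2}$, produces the target quadratic form in $(\dd, j)$ plus explicitly identifiable lower-order corrections.

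The main obstacle is the careful bookkeeping of error terms. Because $|\dd|$ and $|j|$ may reach $\cO{\sqrt{w \log w}}$, cross-products such as $x\dd$, $k\dd/w$, $xj$, or $x\dd j/w$ may each individually be as large as $\cO{\sqrt{\log w}}$, and naive term-by-term bounding fails. One must verify that every such term, after combining with additional small factors ($k/w$, $x$, or the $1/N_a, 1/N_b$ prefactors), enters the exponent at size at most $\cO{w^{-1/2}}$; then the expansion $e^u = 1 + \cO{u}$ absorbs the resulting contribution into the $\cO{w^{-1/2}}$ part of $\Ej$. The two cubic Stirling remainders add to contribute the $\cO{(|j|\vee|\dd|)^3/w^2}$ term. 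The overall structure is standard but the regime $|x|, |k/\sqrt w| = \cO{w^{-1/2}}$, rather than $\cO{w^{-1}}$, means that the final multiplicative error is $\cO{w^{-1/2}}$ rather than $\cO{w^{-1}}$; the novelty lies in confirming that the worst-case errors collapse to precisely this bound.
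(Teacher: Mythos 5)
Your proposal follows essentially the same route as the paper: factor $G(k,c)$ into the two binomial densities of $\Sc$ and $\Sb$, apply a De Moivre--Laplace expansion with cubic remainder to each (the paper's \cref{demoivre}), combine the prefactors via $\sqrt{(w+k)(w-k)}=w\mO{k^2/w^2}$, and absorb the cross terms generated by $|k|=\cO{\sqrt w}$ and $|x|\le w^{-1/2}$ into the stated error. The only caveat is that the worst cross term in the exponent is of size $\cO{(|j|\vee|\dd|)^2/w^{3/2}}$ rather than literally $\cO{w^{-1/2}}$, but this splits as $\cO{w^{-1/2}}+\cO{(|j|\vee|\dd|)^3/w^2}$ and hence fits the claimed form of $\Ej(j,\dd)$, exactly as in the paper.
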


Continuing to write $\dd$ and $j$ as in the statement of \cref{G-expansion}, we obtain
\begin{equation*}
(G(k, c) - G(-k, s+2[xs] - c)) = (1+ \Ej(j,\dd)) \frac{1}{\pi\beta\gamma w}\cE{-\frac{\dd^2 + 2j^2}{\beta\gamma w }} - (1+ \Ej(-\tilde j,\dd)) \frac{1}{\pi\beta\gamma w}\cE{-\frac{\dd^2 + 2\tilde j^2}{\beta\gamma w }}\,,
\end{equation*}
where
\begin{equation*}
\tilde j \defeq s + 2[xs]-c - \beta s = j + 2([xs]-xs)\,.
\end{equation*}
Since $|\tilde j - j| \leq 2$, we have in particular that $\Ej(- \tilde j, \dd) = \cO{\Ej(j, \dd)}$, as well as:
\begin{equation*}
\cE{-\frac{\dd^2 + 2\tilde j^2}{\beta\gamma w }} = \mO{\frac{|j|}{w}} \cE{-\frac{\dd^2 + 2j^2}{\beta\gamma w }} = (1+ \cO{\Ej(j,\dd)})\cE{-\frac{\dd^2 + 2j^2}{\beta\gamma w }}
\end{equation*} 
We obtain
\begin{equation}\label{eq:po-G-diff-final}
    \left|(G(k, c) - G(-k, s+2[xs] - c))\right| = \cO{\Ej(j,\dd)} \frac{1}{\pi\beta\gamma w}\cE{-\frac{\dd^2 + 2j^2}{\beta\gamma w }}
\end{equation}
By \cref{gaussian-sum},
\begin{align*}
\sum_{c \in \ZZ} |c - \beta s| \Ej(j,\dd) \frac{1}{\pi\beta\gamma w}\e^{-\frac{\dd^2 + 2j^2}{\beta\gamma w }} & = \cO{\frac 1 w} \e^{-\frac{\dd^2}{\beta \gamma w}} \sum_{c \in \ZZ} \left(\frac{|c - \beta s|}{w^{1/2}} + \frac{|c - \beta s|^4}{w^{2}} + \frac{|c - \beta s| |\dd|^3}{w^2}\right) \e^{-\cT{\frac{(c - \beta s)^2}{w}}} \\
& = \left(\cO{\frac{1}{w^{1/2}}} + \cO{\frac{|\dd|^3}{w^2}}\right) \e^{-\frac{\dd^2}{\beta \gamma w}} \\
& = \cO{\frac{1}{w^{1/2}}}\e^{-\frac{\dd^2}{2 \beta \gamma w}}\,,
\end{align*}
where the last step uses that $\cO{\frac{|\dd|^3}{w^2}} \e^{-\frac{\dd^2}{\beta \gamma w}} = \cO{\frac{1}{w^{1/2}}}\e^{-\frac{\dd^2}{2 \beta \gamma w}}$.

Since $w^{-1/2} = \cO{xw^{1/2}}$, combining this calculation with~\eqref{eq:po-G-diff-final} yields
\begin{equation*}
\sum_{c \in \ZZ} |c - \beta s| \left|(G(k, c) - G(-k, s+2[xs] - c))\right| = \cO{xw^{1/2}} \cEws\,,
\end{equation*}
and combining this fact with~\eqref{final_case_3} finishes the proof of \cref{po-with-indicator-bd}.
\end{proof}

\subsection{Proofs of lemmas}
\begin{proof}[Proof of lemma \ref{po-f-bound}]
We let $S = S(\sigma)$ and $S' = S(\sigma')$, so that $(S, S')$ is an exchangeable pair under both $\pp_0$ and $\pp_c$.
Define $a_S$ and $b_S$,
\begin{align}
\begin{split}\label{eq:po-alpha,beta}
    \pp_0\left[S' > S~|~S\right] =  \frac{w-S}{2w} =: \frac{a_S}{w}\,, \quad \pp_0\left[S' < S~|~S\right]
    = \frac{S}{2w} =: \frac{b_S}{w}\,.
\end{split}
\end{align}
On the other hand, conditioning on $E_k$ for some $k \in \K$:
\begin{align}
    \begin{split}\label{eq:po-transition-probabilities-k}
        \pp_c\left[S'<S ~|~ \Sb,\Sc, E_k\right] &= \frac{\Sb\beta + \Sc\gamma}{w}, \\ \pp_c\left[S'>S ~|~ \Sb,\Sc, E_k\right] &= \frac{(w/2-k/2-\Sc)\beta + (w/2+k/2-\Sb)\gamma}{w}
    \end{split}
\end{align}
Consider the space of functions on $[w]$.
We define a skew-symmetric operator $\antisym$ on such functions by
\begin{equation*}
    (\antisym f)(s, s') = w\left(f(s')\ind_{s' = s + 1} - f(s)\ind_{s = s'+1}\right)
\end{equation*}
We then define the Stein operators
\begin{align}
T_0 f(\sigma) & \defeq \EE_0[(\antisym f)(S, S') \mid \sigma] \\
T_c f(\sigma) & \defeq \EE_c[(\antisym f)(S, S') \mid \sigma]\,.
\end{align}
Explicitly, by~\eqref{eq:po-alpha,beta},
\begin{equation*}
T_0 f(\sigma) = a_S f(S+1) - b_S f(S)\,.
\end{equation*}
In particular, $T_0 f$ depends on $\sigma$ only through $S(\sigma)$, so that $T_0$ agrees with the operator $T$ on functions on $[w]$ defined by
\begin{equation}\label{eq:po-T0}
    T f(s) = a_s f(s+1) -b_s f(s) \quad \forall s \in [w]\,.
\end{equation}

Using the tower property of conditional expectation (i.e. the identity $\EE[\EE[X|Y]] = \EE[X]$) to condition over $E_k$ for each $k \in \K$ via \eqref{eq:po-transition-probabilities-k},
\begin{align*}
    T_c f(\sigma) - T_0 f(\sigma) &= (\gamma-\beta)\sum_{k \in \K}\left( \left(\frac{\Sc-\Sb}{2}\right)\Delta f(S) + \frac{k}{2}f(S+1)\right) \pp_c[{E_k| E_{\K}}] \\
    &= (\gamma-\beta)\left(\left(\frac{\Sc-\Sb}{2}\right)\Delta f(S) + f(S+1)\sum_{k \in \K, k \geq 0}  \frac{k}{2} (\pp_c[E_k| E_{\K}]-\pp_c[{E_{-k}| E_{\K}}])\right)
\end{align*}
Marginally, $\Sa + \Sb \sim \Bin(w, 1/2)$, so $\Sa + \Sb \overset{d}{=} w - (\Sa + \Sb)$ and therefore $\pp_c[{E_k| E_{\K}}]=\pp_c[{E_{-k}| E_{\K}}]$ for all $k \in \K$. So, the summation in the last equation cancels to zero and we obtain
\begin{equation} \label{eq:po-T-diff}
    T_c f(\sigma) - T_0 f(\sigma)= (\gamma-\beta)\left(\frac{\Sc-\Sb}{2}\right)\Delta f(S) 
\end{equation}
Since $S(\sigma)$ under $\mu_0$ is an unbiased Binomial random variable with $w$ trials, it is easy to check that taking $\mu := \mu_0(S)$, $a_s$ and $b_s$ as given in \eqref{eq:po-alpha,beta}, and $T$ as in $\eqref{eq:po-T0}$, satisfies condition \eqref{eq:lemma-stein-inv-condit}. Thus, for each $k \in \K$, we may apply Lemma \ref{stein-inv} with $t := (w+k)/2$ to obtain a function $f_k$. Defining the superposition $f := \sum_{k \in \K} f_k$, we have from the linearity of $T_0$,
\begin{equation*}\label{eq:po-Tinv}
    T_0f(\sigma) = \ind_{S \in \K} - \mu_0(\K_w)
\end{equation*}
Taking expectations of both sides of \eqref{eq:po-T-diff} with this choice of $f$ establishes the first desired claim:
\begin{equation*}
    \E[c]{(\gamma-\beta)\left(\frac{\Sc-\Sb}{2}\right)\Delta f(S)} = \mu_0(\K_w) - \mu_c(\K_w)
\end{equation*}

We now turn to bounding $\sum_{s = 0}^k |\Delta f(s)|$.
Fix some $k \in \K$ and let $t := (w+k)/2$. Then $k = \cO{\sqrt{w}}$, and so $\left|t \vee (w-t)\right| = \cT{w}$.
By \cref{stein-inv}, part c, we have
\[
    \Delta f_k(t) \le \min\left(a_{t}^{-1},b_{t}^{-1}\right) = \min\left(t^{-1},(w-t)^{-1}\right) = \cO{ \frac{1}{w}}
\]
\Cref{stein-inv}, part d, then implies $\sum_{s}|\Delta f_k(s)| = \cO{w^{-1}}$. Summing over $k \in \K$ yields the second desired result, completing the lemma.
\begin{align*}
    \sum_{s}|\Delta f(s)| &\le \sum_{s}\sum_{k \in \K}|\Delta f_k(s)| = \cO{|\K|w^{-1}}
\end{align*}
\end{proof}

\begin{proof}[Proof of lemma \ref{po-G-tail}:]
Write $\dd = s - w/2$, and let $j = c - \beta s$.
By standard tail bounds for binomial random variables (\cref{cramer}), we have
\begin{align*}
    \binom{(w+k)/2}{\gamma s-j}\beta^{\frac{w+k}{2} - \gamma s +j } \gamma^{\gamma s-j} &= \cO{ \frac{1}{\sqrt{w}}} 
    \cE{- \frac{2 (\gamma \dd - j - \gamma k/2)^2}{w+k}} \\
    \binom{(w-k)/2}{\beta s + j }\beta^{\beta s + j} \gamma^{\frac{w-k}{2}-\beta s - j} &= \cO{ \frac{1}{\sqrt{w}}} \cE{-\frac{2(\beta \dd + j + \beta k/2)^2}{w-k}}
\end{align*}
Multiplying these two inequalities together and using the fact that $k = \cO{w^{1/2}}$ yields
\begin{align*}
G(k, \beta s + j) & = \cO{ \frac{1}{w}} \cE{- \frac{(\dd + x k)^2 + (x\dd + 2j + k)^2}{2w}}
\end{align*}
There are three cases to check.
If $|\dd|, |j| = \cO{w^{1/2}}$, then the quantity in the exponent is of constant order, so the entire expression is $\cO{\frac 1w}$, which agrees with the desired bound.
If $|\dd| = \omega({w^{1/2}})$ and $|\dd| \geq |j|$, then $(\dd + xk)^2 = \cT{\dd^2} = \cT{\dd^2 + j^2}$, which yields the desired bound.
if $|j| = \omega({w^{1/2}})$ and $|j| \geq |\dd|$, then $(x\dd + 2j + k)^2 = \cT{j^2} = \cT{\dd^2 + j^2}$ as well.
All together, we have
\begin{equation*}
G(k, \beta s + j) = \cO{\frac 1w} \cE{-\cT{\frac{\dd^2 + j^2}{w}}}\,,
\end{equation*}
as claimed.
\end{proof}

\begin{proof}[Proof of \cref{G-expansion}]
Rewrite the definition of $G$ as the product of two binomial densities:
\begin{equation*}
    G(k, \beta s + j) := \binom{\frac{w+k}{2}}{\gamma s - j }  \beta^{\frac{w+k}{2}-\gamma s + j} \gamma^{\gamma s - j} \binom{\frac{w-k}{2}}{\beta s + j} \beta^{\beta s + j} \gamma^{\frac{w-k}{2}-\beta s - j} 
\end{equation*}
Let
\[
    \zeta := \frac{\gamma s - j }{(w+k)/2}\,.
\]
Then, $\zeta -\gamma = \cO{(|j| \vee |\dd|)/w} = \oo(1)$. Indeed, 
\begin{equation}\label{eq:po-zeta}
    \zeta - \gamma =  \left( \frac{\dd}{w} - \frac{2(j+x\dd)}{w}\right)  \mO{\frac{k}{w}}
\end{equation}
and this is $\oo(1)$ since we have assumed that $j$ and $\dd$ are both $\cO{\sqrt{w\log w}}$.

Applying \cref{demoivre} and recalling that $k = \cO{w^{1/2}}$, we obtain
\begin{align}
\binom{\frac{w+k}{2}}{\gamma s - j }  \beta^{\frac{w+k}{2}-\gamma s + j} \gamma^{\gamma s - j} & = \mO{\frac 1w} \frac{1}{\sqrt{\pi (w+k) \beta \gamma}} \e^{- \frac{w(\zeta - \gamma)^2}{4 \beta \gamma} + \cO{k (\zeta - \gamma)^2} + \cO{\zeta - \gamma} + \cO{w (\zeta - \gamma)^3}} \nonumber \\
& = \left(1 + \cO{\frac 1w} + \cO{\frac{|j| \vee |\dd|}{w}} + \cO{\frac{(|j| \vee |\dd|)^3}{w^2}}\right) \frac{1}{\sqrt{\pi (w+k) \beta \gamma}} \e^{- \frac{w(\zeta - \gamma)^2}{4 \beta \gamma}}\,.\label{eq:po-binom1-approx}
\end{align}

Similarly, if we let
\begin{equation*}
\eta := \frac{\beta s + j}{(w-k)/2}\,,
\end{equation*}
then an identical computation shows
\begin{equation}\label{eq:po-eta}
\eta - \beta = \left(\frac{\dd}{w} + \frac{2 (j+x\dd)}{w}\right)\mO{\frac k w}
\end{equation}
and
\begin{equation}
 \binom{\frac{w-k}{2}}{\beta s + j} \beta^{\beta s + j} \gamma^{\frac{w-k}{2}-\beta s - j} =  \left(1 + \cO{\frac 1w} + \cO{\frac{|j| \vee |\dd|}{w}} + \cO{\frac{(|j| \vee |\dd|)^3}{w^2}}\right) \frac{1}{\sqrt{\pi(w-k)\beta \gamma}} \e^{-\frac{w(\eta - \beta)^2}{4\beta\gamma} } \label{eq:po-binom2-approx}
\end{equation}

We would like to multiply \eqref{eq:po-binom1-approx} and \eqref{eq:po-binom2-approx} together in order to derive an approximation for $G$.
First, the product of the polynomial prefactors is
\begin{equation*}
\frac{1}{\pi\beta \gamma \sqrt{w^2 - k^2}} = \mO{\frac{k^2}{w^2}} \frac{1}{\pi \beta \gamma} = \mO{\frac 1 w} \frac{1}{\pi w \beta \gamma}\,.
\end{equation*}
Combining~\eqref{eq:po-zeta} and \eqref{eq:po-eta} and using the fact that $k = \cO{\sqrt k}$ and $x \leq w^{-1/2}$ yields
\begin{equation*}
(\zeta -\gamma)^2 + (\eta - \beta)^2 = \mO{\frac k w}\left(\frac{2 \dd^2}{w^2} + \frac{8 (j + x \dd)^2}{w^2}\right) = \mO{w^{-1/2}}\left(\frac{2 \dd^2}{w^2} + \frac{8 j^2}{w^2}\right)\,;
\end{equation*}
therefore, the product of the exponential terms is
\begin{equation*}
\cE{-\frac{2\dd^2 + 4 j^2}{2 \beta \gamma w} \mO{w^{-1/2}}} = \mO{\frac{(|j| \vee |\dd|)^2}{w^{3/2}}} \e^{-\frac{2\dd^2 + 4 j^2}{2 \beta \gamma w}}\,.
\end{equation*}
We conclude that
\begin{align*}
G(k, \beta t + j) & = \left(1 + \cO{\frac 1w} + \cO{\frac{|j| \vee |\dd|}{w}} + \cO{\frac{(|j| \vee |\dd|)^3}{w^2}}+ \cO{\frac{(|j| \vee |\dd|)^2}{w^{3/2}}} \right)\frac{1}{\pi w \beta \gamma}\e^{-\frac{2\dd^2 + 4 j^2}{2 \beta \gamma w}} \\
& = \left(1 + \cO{\frac{1}{w^{-1/2}}} + \cO{\frac{(|j| \vee |\dd|)^3}{w^2}} \right)\frac{1}{\pi w \beta \gamma}\e^{-\frac{2\dd^2 + 4 j^2}{2 \beta \gamma w}}\,,
\end{align*}
as claimed.
\end{proof}

\subsection{Proof of Theorem \ref{main}}
The proof will closely follow the proof of \cref{fullcurve}.
Again by \cref{pz} and \cref{parity_coupling}, the following suffices:
\begin{theorem}\label{main_smm}
    Let $A$ be from the $(m,n,p)$-Bernoulli ensemble with $n \ge C m\log m$, where $C > (2 \log 2)^{-1}$, and let $P$ be the event that each row of $A$ sums to an even number.
    Let $W$ be the vector of row weights.
    Then, for any such $w \in (2 \mathbb N)^m$ with $w_i < .49 n$ for all $i$, %
    \[
        \E{Z_0^2 \big| W=w} = \left(1 + \oo(1)\right) \E{Z_0\big|W=w}^2
    \]
\end{theorem}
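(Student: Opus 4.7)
The plan is to apply \cref{2mm} to the law of $A$ conditioned on $\{W = w\}$ with $\K_i = \{0\}$, following the template of \cref{fullcurve_smm}. The first-moment and weak bounds reduce to routine Stirling estimates: conditioned on $W_i = w_i$, the $i$th row is a uniformly random $\{0,1\}$-vector of weight $w_i$, so for a balanced vector $u$ the quantity $(w_i + \langle u, A_i\rangle)/2$ is hypergeometric with parameters $(n, n/2, w_i)$, and the hypothesis $w_i < 0.49 n$ yields $\psi_i = \binom{n/2}{w_i/2}^2/\binom{n}{w_i} = \cT{w_i^{-1/2}}$. Combined with $n \geq C m \log m$ for $C > (2\log 2)^{-1}$, this gives $\log \E{Z_0 \mid W = w} = \cT{n}$, and a similar Stirling computation yields $\phi_i(\beta) \leq C_\delta \psi_i^2$ for $\beta \in [\delta, 1-\delta]$.

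The heart of the proof is the strong bound via Stein's method. As in the Poisson case, I would partition the $n$ coordinates into the four sign-classes $(+,+),(+,-),(-,-),(-,+)$ induced by a pair $(u,v)$ of balanced vectors and let $\sigma = (\Sa, \Sb, \Sc, \Sd)$ count the ones of the row in each class. The key structural change from \cref{fullcurve_smm} is that $\sigma$ now follows a multivariate \emph{hypergeometric} law rather than a multinomial one: conditioned on $W_i = w$, the ones are placed without replacement among class-sizes $(\beta n/2,(1-\beta)n/2,\beta n/2,(1-\beta)n/2)$. For the exchangeable pair I would use the natural swap dynamics — pick a uniformly random $1$-entry and a uniformly random $0$-entry of the row and exchange them — which is the standard reversible chain preserving the uniform fixed-weight distribution. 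With $S = \Sb + \Sc$ the number of ones in the $u$-negative region, the unconditional transition probabilities factor as
\begin{equation*}
\pp_0[S' = S+1 \mid \sigma] = \frac{(w-S)(n/2-S)}{w(n-w)}\,, \qquad \pp_0[S' = S-1 \mid \sigma] = \frac{S(n/2-w+S)}{w(n-w)}\,,
\end{equation*}
and (after rescaling by $w$) the resulting Stein operator $T_0$ has the hypergeometric $\mu_0$ as its stationary measure via \cref{stein-inv}. Writing out the analogous rates under $\pp_c$ and using the $k \leftrightarrow -k$ symmetry (which still holds since $\Sa + \Sb$ is symmetric around $w/2$ under both $\pp_0$ and $\pp_c$), the difference reduces, as in \cref{po-f-bound}, to
\begin{equation*}
T_c f(\sigma) - T_0 f(\sigma) = (\gamma - \beta)\,h(\sigma)\,\Delta f(S)\,,
\end{equation*}
with $h$ linear in $\Sc - \Sb$. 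Inverting $T_0$ at $t = w/2$ via \cref{stein-inv} produces an $f$ with $\Delta f(w/2) = \cO{w^{-1}}$ and $\sum_s |\Delta f(s)| = \cO{w^{-1}}$, since $w, n-w = \cT{n}$ under our assumption.

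The main obstacle will be the hypergeometric analog of \cref{po-with-indicator-bd}, namely showing $|\E[c]{(\Sc - \Sb) \ind_{S = s}}| = \cO{x w^{1/2}} \cEws$. In the Poisson proof this exploited the fact that, conditioned on $\Sa + \Sb = (w+k)/2$, the pair $(\Sb, \Sc)$ factors as a product of two independent binomials, permitting the clean Edgeworth-style expansion of \cref{G-expansion}. In the Bernoulli setting the same conditioning makes $(\Sb, \Sc)$ a multivariate hypergeometric rather than a product, so the joint density is a ratio of four binomial coefficients to two. I would expand each binomial via Stirling to $\mo{w^{-1}}$ precision, retaining the cubic-in-$(|j|\vee|\dd|)$ correction terms exactly as in \cref{G-expansion}; the assumption $w < 0.49 n$ guarantees that all relevant variances stay of order $w$ and that the local Gaussian approximation is uniformly valid. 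The three-case decomposition by the size of $|x|$ (trivial bound for $|x| \geq w^{-1/2}$, symmetrization alone for $|x| \leq w^{-1}$, and symmetrization plus the refined LCLT for the intermediate range) then transfers with only cosmetic changes. Once this indicator bound is established, pairing it with the $L^1$ estimate on $\Delta f$ yields $|\mu_0(\{w/2\}) - \mu_c(\{w/2\})| = \cO{x^2 \psi}$, which is exactly the strong bound of \cref{2mm}; \cref{2mm} then delivers $\E{Z_0^2 \mid W = w} = \mo{1} \E{Z_0 \mid W = w}^2$ as required.
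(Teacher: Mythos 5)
Your overall architecture matches the paper's: condition on the row weights, verify the first-moment and weak bounds by hypergeometric Stirling estimates, and establish the strong bound via an exchangeable-pairs Stein argument built on the swap dynamics for the fixed-weight row, with the same three-case split on $|x|$ for the indicator estimate. The transition probabilities you write for $\pp_0$ are exactly the paper's. However, there is a genuine gap in your Stein computation. You assert that
\begin{equation*}
T_c f(\sigma) - T_0 f(\sigma) = (\gamma-\beta)\, h(\sigma)\, \Delta f(S)
\end{equation*}
with $h$ linear in $\Sc-\Sb$, so that the entire discrepancy $\mu_c(\{w/2\})-\mu_0(\{w/2\})$ carries a factor of $x$. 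That identity is false for the swap chains you define. Because sampling is without replacement, the transition rates are \emph{quadratic} in the counts (as your own formulas show: $a_S=(w-S)(n/2-S)$, $b_S=S(n/2-w+S)$), and the conditioned chain's rates are quadratic in $\Sb$ and $\Sc$ separately. Carrying out the subtraction, the difference of generators is
\begin{equation*}
T_c f(\sigma) - T_0 f(\sigma) = \left[(\Sb-\Sc)^2 - n(\Sc-\Sb)\left(\beta-\tfrac12\right)\right]\Delta f(S)\,,
\end{equation*}
i.e., there is an additional term quadratic in $\Sc-\Sb$ that carries \emph{no} factor of $x$ and does not vanish at $\beta=1/2$. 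This is not an artifact: conditioning on the $v$-margin genuinely changes the variance of $S$ even at half overlap, because the conditioned chain swaps within a $v$-type while the unconditioned chain swaps globally. Your proposed proof would therefore conclude $|\mu_c-\mu_0|=\cO{x^2\psi}$ from an identity that omits a term of order $\E[c]{(\Sc-\Sb)^2\ind_{S=s}} = \cO{w^{1/2}}\cEws$ at $x=0$.

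The gap is repairable with the same toolkit: one needs a separate estimate $\E[c]{(\Sc-\Sb)^2\ind_{S=s}} = (\cO{x^2w^{3/2}}+\cO{w^{1/2}})\cEws$ (which follows easily from the crude hypergeometric tail bound and a Gaussian sum), and then the quadratic term contributes a multiplicative error $\mO{1/n}$ to $\phi(1/2+x)/\psi^2$ rather than $\mO{x^2}$ alone. This is still acceptable for the strong bound of \cref{2mm} precisely because $m=\oo(n)$, but that observation has to be made; as written, your argument neither produces nor controls this term. A secondary, more cosmetic point: in Case 3 you propose a direct Stirling expansion of the four binomial coefficients in the hypergeometric density, whereas the paper factors the hypergeometric density as the binomial density $G$ times correction products $R_\beta R_\gamma$ and bounds the ratio of corrections separately; your route can work but must track the extra $\cO{(x^2w^2+j^2)/n}\,\e^{xwj/n}$ finite-population correction that this factorization makes explicit.
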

To obtain \cref{main}, let $A$ be from the $(m, n, p)$-Bernoulli ensemble, conditioned on the event $P$ that each row of $A$ sums to an event number. By assumption $p \le 1/2$. And, \Cref{mnp} already established that $\disc(A)  = 0$ with high probability if $m = \oo(np)$, so we may assume that $p \le .48$.
Then, with high probability, $W_i < .49 n$ for all $i$ simultaneously.
In other words the collection of events $\{W = w\}$ for $w \in (2 \mathbb N)^m$ with $w_i < .49 n$ satisfy
\begin{equation*}
\sum_w \PP{W = w \mid P} = 1 - \oo(1)\,.
\end{equation*}
Therefore, by \cref{pz}, proving \cref{main_smm} will imply that $\disc(A) = 0$ with high probability.
Finally, removing the conditioning on $P$ by \cref{parity_coupling} proves the claim.

We will check the three conditions of \cref{2mm} for $\K = \{0\}$ when $A$ is from the Bernoulli ensemble conditioned on $W = w$.
Say $u$ and $v$ are balanced $\{-1,+1\}^n$ vectors agreeing on $\beta n$ coordinates, and again suppress the subscript $i$ whenever not ambiguous, e.g. $w = w_i$, $\phi = \phi_i$, and $\psi = \psi_i$. Recalling $\gamma = 1-\beta$, we have
\[
    \psi := \binom{n/2}{w/2}^2\binom{n}{w}^{-1}, \quad \phi(\beta) = \sum_{t = 0}^{w/2} \binom{\beta n/ 2}{t}^2\binom{\gamma n / 2}{w/2-t}^2\binom{n}{w}^{-1}
\]
By Stirling's formula (\cref{stirling}), $\psi = \cT{w^{-1/2}} = \cOm{n^{-1/2}}$. Now consider $\phi(\beta)$ with $\beta \in [\del,1-\del]$ for some universal constant $\del>0$. Since we have assumed that $w_i \le .49n$ for all $i$, the binomial coefficients in the definition of $\phi$ are all non-zero by taking $\del$ sufficiently small. For a sufficiently large constant $M$, standard hypergeometric tail bounds (\cref{hyp_stirling}) yield
\begin{align*}
    \phi(\beta) &= \mO{\frac{1}{w}} \sum_{|t - \beta w/2| \le M\sqrt{w\log w}}  \binom{\beta n/ 2}{t}^2\binom{\gamma n / 2}{w /2 - t}^2\binom{n}{w}^{-1} \\
    &= \mO{\frac{1}{w}} \binom{n/2}{w/2}^{2}\binom{n}{w}^{-1}\sum_{|t - \beta w/2| \le M\sqrt{w\log w}}  \binom{\beta n/ 2}{t}^2\binom{\gamma n / 2}{w /2 - t}^2 \binom{n/2}{w/2}^{-2} \\
    &= \cO{\frac{1}{w^{-3/2}} } \sum_{t \in \ZZ} \e^{-\cT{\frac{t - \beta w/2)^2}{w}}} \\
    & = \cO{w^{-1}}\,.
\end{align*}
where the last step uses \cref{gaussian-sum}.
Since
\begin{equation*}
\E{Z_0\big|W=w} = \binom{n}{n/2} \prod_{i=1}^n \psi_i = \binom{n}{n/2} \originalleft.\cOm{n^{-1/2}}\originalright.^m \geq \cE{n \log 2 - \mo{1}\frac m 2 \log n}\,,
\end{equation*}
the assumption that $n \geq C m \log m$ for $C > (2 \log 2)^{-1}$ implies that the first-moment condition \eqref{eq:2mm-goals-expectation} holds.
Similarly, the weak bound \eqref{eq:2mm-goals-weak} holds: for some implicit constants that depend only on $\del$, 
\[
    \phi(\beta) = \cO{w^{-1}} \leq C_\delta \psi_i^2\,.
\]

It remains to establish the strong inequality \eqref{eq:2mm-goals}. As in the proof of \cref{fullcurve}, Stirling's approximation only allows us to compute $\phi$ and $\psi$ up to an error of $\mO{w^{-1}}$, which we cannot afford. We again use Stein's method to circumvent this challenge.

We adopt a modified version of the construction used in \cref{fullcurve}.
If we condition on the event that $W_i = w$, we can generate $A_i$ by choosing $w$ coordinates uniformly \emph{without replacement} from $\{1, \dots, n\}$ and setting the corresponding coordinates of $A_i$ to~1.
Given two balanced vectors $u$ and $v$, it again suffices to track how many of the chosen coordinates correspond to entries of $u$ and $v$ which are both positive, both negative, or of mixed sign.

Consider a urn of $n$ balls labeled with $(+,+)$, $(+,-)$, $(-,+)$ or $(-,-)$, where we assign $\beta n/2$ to the labels $(+,+)$ and $(-,-)$ and $\gamma n/2$ to $(+, -)$ and $(-, +)$.
We select $w$ balls independently without replacement from this urn, and view each outcome as reflecting the signs of the entry of $v$ and entry of $u$ corresponding to the selected coordinate.
As before, we define the vector $\sigma =(\Sa, \Sb, \Sc, \Sd)$ of counts, and construct an exchangeable copy of $\sigma'$ by choosing one of the $w$ selected balls uniformly at random and swapping it with a random ball in the urn.
The joint law of $(\sigma, \sigma')$ is the \emph{unconditioned distribution}, $\pp_0$.

For the conditioned distribution, we again divide the balls into two types---$\{(+, +), (+, -)\}$ on the one hand, $\{(-, -), (-, +)\}$ on the other---and consider drawing as above $w$ balls without replacement from the urn, but conditioned on the event that exactly $w/2$ balls of each type are chosen.
We obtain a different distribution on count vectors $\sigma$; to construct an exchangeable pair, we generate another vector $\sigma'$ by choosing one of the $w$ selected balls uniformly at random and swapping it with a random ball in the urn of the same type.
This induces a joint law on $(\sigma, \sigma')$ under which these variables are again exchangeable, which we call the \emph{conditioned distribution}, $\pp_c$.

We again focus on $S = S(\sigma) = \Sb + \Sc$.
Adopting the same notation as in the proof of \cref{fullcurve} (note that now $\K = \{0\}$), we have
\begin{align}
\begin{split}\label{eq:ber-stein-implies-quadUB}
    &\pp_0\left[S = w/2\right] = \psi = \pp_0[E_\K], \quad \pp_c\left[S = w/2 \right]\psi = \phi(\beta) 
\end{split}
\end{align}
We also have the following analogue of \cref{stein-inv}.
\begin{lemma}\label{ber-stein-inv}
There is a function $f$ satisfying the identity
\begin{equation}\label{eq:ber-final-bd}
    |\mu_c(w/2) - \mu_0(w/2)| = \left| \E[c]{ \left((\Sc-\Sb)^2 -n (\Sc-\Sb)\left(\beta - \frac{1}{2}\right)\right) \Delta f(S)} \right|
\end{equation}
as well as: 
\begin{equation}\label{eq:ber-Df-L1}
    \sum_{s =0}^w|\Delta f(S)| = \cO{ \frac{1}{nw} }
\end{equation}
\end{lemma}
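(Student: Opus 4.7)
The proof will follow the structure of Lemma~\ref{po-f-bound}, with modifications for the without-replacement setting. Under $\pp_0$ the exchangeable chain on $S$ is a hypergeometric chain obtained by swapping one of the $w$ selected balls with a random unselected ball; direct counting gives
\begin{equation*}
\pp_0[S' = S+1 \mid \sigma] = \frac{(w - S)(n/2 - S)}{w(n-w)}, \quad \pp_0[S' = S-1 \mid \sigma] = \frac{S(n/2 - w + S)}{w(n - w)}.
\end{equation*}
Scaling the Stein kernel by $z = w(n-w)$ in \eqref{eq:def-stein-op} yields $T_0 f(\sigma) = a_S f(S+1) - b_S f(S)$ with $a_s = (w-s)(n/2-s)$ and $b_s = s(n/2 - w + s)$. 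A direct check verifies that these sequences and the hypergeometric law $\mu_0$ of $S$ under $\pp_0$ satisfy the stationarity conditions of Lemma~\ref{stein-inv}; applying that lemma with $t = w/2$ produces $f$ with $T_0 f(s) = \ind(s = w/2) - \mu_0(\{w/2\})$.

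The heart of the argument is expanding $T_c f - T_0 f$. Under $\pp_c$, the swap is restricted to preserve the $v$-type of the ball, so transitions decompose into ``$+$-type'' and ``$-$-type'' cases whose probabilities depend on the individual counts $\Sa,\Sb,\Sc,\Sd$ together with the factors $\beta, \gamma$ reflecting the conditional law of $u$ given $v$. Writing $\beta = 1/2 + x$, $\gamma = 1/2 - x$, and carefully collecting terms, one obtains
\begin{equation*}
T_c f(\sigma) - T_0 f(\sigma) = (\Sa - \Sd)\bigl[(\Sc - \Sb) - nx\bigr] f(S+1) - (\Sc - \Sb)\bigl[(\Sa - \Sd) - nx\bigr] f(S).
\end{equation*}
The crucial simplification comes from noting that under the conditioning $\K = \{0\}$ (i.e.\ $\la v, A_i\ra = 0$), the constraint forces both $\Sa + \Sb = w/2$ and $\Sc + \Sd = w/2$ almost surely, and hence
\begin{equation*}
\Sa - \Sd \;=\; \Sa + \Sc - w/2 \;=\; \Sc - \Sb \qquad \pp_c\text{-a.s.}
\end{equation*}
Substituting $\Sa - \Sd = \Sc - \Sb$ collapses the above display into $\bigl[(\Sc - \Sb)^2 - n(\Sc - \Sb)(\beta - 1/2)\bigr]\Delta f(S)$. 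Taking expectations under $\pp_c$ and using $\E[c]{T_c f} = 0$ together with $\E[c]{T_0 f} = \mu_c(\{w/2\}) - \mu_0(\{w/2\})$ yields \eqref{eq:ber-final-bd}.

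The uniform bound \eqref{eq:ber-Df-L1} follows immediately: since $w_i < 0.49 n$ by assumption, $a_{w/2} = b_{w/2} = (w/2)(n/2 - w/2) = \cT{nw}$. Lemma~\ref{stein-inv}.c therefore yields $|\Delta f(w/2)| \le \min(a_{w/2}^{-1}, b_{w/2}^{-1}) = \cO{1/(nw)}$, and Lemma~\ref{stein-inv}.d extends this to $\sum_s |\Delta f(s)| = \cO{|\Delta f(w/2)|} = \cO{1/(nw)}$.

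The principal obstacle is the algebraic manipulation of $T_c f - T_0 f$ and noticing that the apparent linear-in-$f$ remainder $-nx[(\Sa - \Sd) f(S+1) - (\Sc - \Sb) f(S)]$ does not collapse into a multiple of $\Delta f(S)$ on its own; only after invoking the combinatorial identity $\Sa - \Sd = \Sc - \Sb$ does the whole difference become a scalar multiple of $\Delta f(S)$. In the Poisson analog the analogous symmetrization relied on averaging over $E_k \leftrightarrow E_{-k}$ inside $\K$ and left a linear correction; here the discrete without-replacement structure combined with the sharp constraint $\K = \{0\}$ produces a clean quadratic form $(\Sc - \Sb)^2 - n(\Sc - \Sb)x$ at the cost of a more delicate initial calculation.
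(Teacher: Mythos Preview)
Your proposal is correct and follows essentially the same approach as the paper: identify the hypergeometric birth--death coefficients $a_s,b_s$, invoke \cref{stein-inv} at $t=w/2$ to build $f$, compute $T_cf-T_0f$, and read off the $L^1$ bound from parts (c)--(d). The only difference is organizational: the paper substitutes $\Sa=w/2-\Sb$, $\Sd=w/2-\Sc$ into the $\pp_c$ transition probabilities at the outset and obtains the factored form $[(\Sc-\Sb)^2-n(\Sc-\Sb)x]\Delta f(S)$ directly, whereas you first derive the symmetric expression $(\Sa-\Sd)[(\Sc-\Sb)-nx]f(S+1)-(\Sc-\Sb)[(\Sa-\Sd)-nx]f(S)$ and then collapse it using $\Sa-\Sd=\Sc-\Sb$. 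Both routes use the conditioning constraint (you already need $\Sa+\Sb=\Sc+\Sd=w/2$ to make the denominators $w(n-w)/2$), so the ``crucial simplification'' you highlight is in fact present in both arguments; your version simply makes more visible why the coefficients of $f(S+1)$ and $f(S)$ coincide.
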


The proofs of this and all succeeding technical lemmas are deferred to the next section. We turn to bounding \eqref{eq:ber-final-bd}. Just as in \cref{fullcurve}, the main technical difficulty of this theorem is to bound $\EE_C\left[ (\Sc-\Sb) \ind_{S = s}\right]$.
The analogue of \cref{po-with-indicator-bd} is the following estimate.
\begin{proposition}\label{ber-with-indicator-bd}
    Uniformly over $s$,
    \begin{equation*}
        \left|\E[c]{(\Sc-\Sb)\ind_{S = s}}\right| = \cO{xw^{1/2}}\cEws\,.
    \end{equation*}
\end{proposition}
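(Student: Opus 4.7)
The plan is to mirror the proof of Proposition \ref{po-with-indicator-bd} with the multinomial structure of the Poisson ensemble replaced by the hypergeometric one appropriate for sampling without replacement. Since $\K = \{0\}$, conditioning on $E_\K$ becomes conditioning on $E_0 = \{\Sa + \Sb = w/2\}$; under $\pp_c$, $\Sb$ and $\Sc$ are then independent with $\Sb \sim \mathrm{Hyper}(w/2;\, \gamma n/2,\, n/2)$ and $\Sc \sim \mathrm{Hyper}(w/2;\, \beta n/2,\, n/2)$, so that
\[
G(c) \defeq \pp_c[\Sc = c, \Sb = s-c \mid E_0] = \frac{\binom{\beta n/2}{c}\binom{\gamma n/2}{w/2 - c}}{\binom{n/2}{w/2}} \cdot \frac{\binom{\gamma n/2}{s - c}\binom{\beta n/2}{w/2 - s + c}}{\binom{n/2}{w/2}}\,.
\]
The assumption $w \leq 0.49\,n$ keeps the finite-population correction $1 - w/n$ bounded away from zero, so \cref{hyp_stirling} yields the crude tail bound $G(c) = \cO{1/w}\,\cE{-\cT{((s-w/2)^2 + (c - \beta s)^2)/w}}$, the analogue of \cref{po-G-tail}.

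The proof then splits into three cases by the size of $|x|$. In Case 1, $|x| \geq w^{-1/2}$, the crude bound combined with \cref{gaussian-sum} gives the target directly. For the remaining cases the symmetry $c \leftrightarrow s - c$ is used in place of the $k \leftrightarrow -k$ symmetry of the Poisson proof; this symmetry corresponds to swapping $\Sb$ and $\Sc$, and is exact when $\beta = 1/2$. Writing
\[
\sum_c (2c - s) G(c) = \tfrac{1}{2}\sum_c (2c - s)\,(G(c) - G(s - c))\,,
\]
the difference vanishes at $x = 0$. A direct calculation with ratios of binomial coefficients gives $G(s-c)/G(c) = (\beta/\gamma)^{2(s - 2c)}\,\mO{(s-2c)^2/n}$, and in Case 2 ($|x| \leq w^{-1}$) this yields $|G(c) - G(s-c)| = \cO{x|s - 2c|/w}\,\exp(-\cT{\cdot})$, which a Gaussian sum absorbs to the target.

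Case 3, $w^{-1} \leq |x| \leq w^{-1/2}$, is the technical heart. After truncating to $|s - w/2|, |c - \beta s| = \cO{\sqrt{w\log w}}$ with the tail bound, I would establish a refined Gaussian expansion for $G$ (the analogue of \cref{G-expansion}) of the form
\[
G(c) = (1 + \Ej(j, \dd))\,\frac{1}{\pi \beta \gamma w \sqrt{1 - w/n}}\,\cE{-\frac{\dd^2/(1 - w/n) + 2j^2}{\beta\gamma w}}\,,
\]
with $\Ej(j, \dd) = \cO{w^{-1/2}} + \cO{(|j|\vee|\dd|)^3/w^2}$, and then form the symmetrized difference $G(c) - G(s + 2[xs] - c)$, choosing the shift $2[xs]$ to align the peaks of the two Gaussians. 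The resulting expression reduces to one of the same shape as in the Poisson case, and the same Gaussian-sum computation concludes.

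The principal obstacle will be deriving this refined expansion. In the Poisson proof it came essentially from de~Moivre--Laplace applied to a pair of independent binomials; here, four binomial coefficients in $G$ must be expanded simultaneously via Stirling, with delicate cancellations among their first- and second-order corrections required to attain the error rate $\cO{(|j|\vee|\dd|)^3/w^2}$ and to extract the correct hypergeometric variance $\beta\gamma w(1-w/n)$. The assumption $w \leq 0.49\,n$ is used throughout to keep all factorial arguments of order $n$ and thereby make Stirling applicable with uniform control. Once this expansion is in hand, the remainder of the proof duplicates the Poisson argument essentially verbatim.
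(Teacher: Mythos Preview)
Your three-case split and the $c\leftrightarrow s-c$ symmetry are correct and match the paper. There is one genuine gap and one methodological divergence.

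\textbf{Case 2 gap.} The ratio formula $G(s-c)/G(c)=(\beta/\gamma)^{2(s-2c)}\mO{(s-2c)^2/n}$ is lifted from the binomial setting, where the density carries explicit $\beta^{\cdot}\gamma^{\cdot}$ weights; the hypergeometric density has no such factors, so this expression does not arise from ``a direct calculation with ratios of binomial coefficients.'' Worse, your stated correction $(s-2c)^2/n$ carries no factor of $x$ and therefore cannot produce the claimed bound $|G(c)-G(s-c)|=\cO{x|s-2c|}\cdot(\ldots)$---yet the difference vanishes identically at $x=0$. The paper (writing $F$ for your $G$) instead expands $F(c)/F(s-c)$ directly as a product indexed by $r\in\{-xn+1,\dots,xn\}$: since $\beta n/2-\gamma n/2=xn$ is an integer, the ratio of the four binomial coefficients telescopes to $2|x|n$ factors, each of the form $1+\cO{(\text{linear in }c,s,w)/n}$. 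The required $x$ appears as the \emph{length} of the product, and a first-order expansion gives $F(c)/F(s-c)=1+\cO{x(2c-w/2)}+\cO{x(w/2-2(s-c))}$.

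\textbf{Case 3 difference.} Rather than derive a standalone Gaussian expansion for the hypergeometric product (four binomials, the ``delicate cancellations'' you anticipate, and a finite-population variance correction), the paper writes $F(c)=G(0,c)\cdot R_\beta(c)R_\gamma(s-c)$, where $G(0,\cdot)$ is exactly the Poisson-case binomial density and $R$ is the explicit product relating the hypergeometric and binomial pmfs (Pinsky's expansion). \Cref{G-expansion} is then reused verbatim for $G$; the only new work is to bound the symmetrized ratio $R_\beta(c)R_\gamma(s-c)/\bigl(R_\beta(\tilde s-c)R_\gamma(s-\tilde s+c)\bigr)$, which telescopes to a short product controlled by $1+\cO{(x^2w^2+j^2)/n}\cdot\e^{\cO{xwj/n}}$. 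This route reuses the Poisson calculation wholesale and sidesteps the four-binomial Stirling expansion you flag as the principal obstacle; it also explains why the factor $1-w/n$ never needs to appear explicitly.
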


We need a similar result to control the remaining part of \eqref{eq:ber-final-bd}. The proof will follow trivially from the techniques developed in \cref{ber-with-indicator-bd}.

\begin{proposition}\label{ber-with-indicator-bd-square}
Uniformly over all $s \in [w]$,
\[
    \E[c]{ (\Sc-\Sb)^2\ind_{S=s}} = \left(\cO{x^2w^{3/2}} + \cO{w^{1/2}}\right) \cEws\,.
\]
\end{proposition}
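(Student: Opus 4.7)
The plan is to use only the crude ``Case 1'' portion of the argument from the proof of \cref{ber-with-indicator-bd}, which itself mirrors Case 1 of \cref{po-with-indicator-bd}. The reason the square $(\Sc - \Sb)^2$ is easier than $(\Sc - \Sb)$ itself is that no delicate cancellation between $k$ and $-k$ (or between $c$ and a reflected index) is required: the square is non-negative and splits cleanly into a deterministic ``mean'' contribution of size $(xs)^2$ and a random ``fluctuation'' of size $(c - \beta s)^2$. After Gaussian summation each of these contributions matches exactly one of the two terms in the claim.

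Concretely, let $G(c) \defeq \pp_c[\Sc = c,\, S = s]$ and note that, since $\K = \{0\}$ in the Bernoulli setting, there is no sum over $k \in \K$ to take. The proof of \cref{ber-with-indicator-bd} supplies the hypergeometric analogue of \cref{po-G-tail}, namely the crude Gaussian envelope
\[
    G(c) = \cO{\tfrac{1}{w}} \cE{-\cT{\tfrac{1}{w}}\left[(s - w/2)^2 + (c - \beta s)^2\right]}\,.
\]
Using $\Sb = s - \Sc$ on the event $\{S = s\}$, we have $\Sc - \Sb = 2c - s$. Writing $x = \beta - \tfrac12$, we decompose
\[
    (2c - s)^2 \;=\; \bigl(2(c - \beta s) + 2xs\bigr)^2 \;\leq\; 8(c - \beta s)^2 + 8\, x^2 s^2\,.
\]
Combining these two observations with \cref{gaussian-sum} and the trivial bound $s \leq w$, we obtain
\begin{align*}
    \E[c]{(\Sc - \Sb)^2 \ind_{S = s}}
    &= \sum_{c = 0}^s (2c - s)^2\, G(c) \\
    &\leq \cO{\tfrac{1}{w}} \cEws \sum_{c \in \ZZ}\bigl[8(c - \beta s)^2 + 8\, x^2 s^2\bigr]\, \e^{-\cT{(c - \beta s)^2/w}} \\
    &= \cO{\tfrac{1}{w}} \cEws\bigl[\cO{w^{3/2}} + x^2 s^2 \cdot \cO{w^{1/2}}\bigr] \\
    &= \bigl(\cO{w^{1/2}} + \cO{x^2 w^{3/2}}\bigr) \cEws\,,
\end{align*}
which is the claim.

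The only real check is the crude Gaussian envelope on $G$ in the Bernoulli setting. Here $G$ factors as a product of two hypergeometric densities rather than two binomial ones, but the standard Chernoff bound for hypergeometric random variables gives the same subgaussian tail; the row-weight assumption $w \leq 0.49 n$ from \cref{main_smm} keeps the sampling bounded away from the degenerate regime. With that envelope in hand the remainder is pure Gaussian summation, and is strictly simpler than the corresponding analysis for $\E[c]{(\Sc - \Sb) \ind_{S = s}}$ because the squared observable has no sign to exploit and no symmetry argument is needed.
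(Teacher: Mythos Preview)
Your proposal is correct and follows essentially the same route as the paper: expand $(2c-s)^2 = (2(c-\beta s)+2xs)^2$, bound by $\cO{(c-\beta s)^2 + x^2s^2}$, apply the crude Gaussian envelope on the hypergeometric density (what the paper calls $F(c)$ and states as \cref{ber-F-tail}), and finish with \cref{gaussian-sum}. The only difference is cosmetic --- you reuse the symbol $G$ where the paper writes $F$ --- but the computation and the supporting lemma are identical.
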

Let us first show that together these propositions imply the strong bound before proving them. Combining \cref{ber-with-indicator-bd} and \cref{eq:ber-Df-L1}, we obtain
\begin{align*} %
    (xn)\EE_C\left[ (\Sc-\Sb)\Delta f(S) \right] = \cO{x^2 w^{1/2}n} \sum_{s=0}^w \cEws |\Delta f(s)| &= \cO{x^2w^{-1/2}}\,,
\end{align*}
Similarly, using \cref{ber-with-indicator-bd-square} and recalling $n = \Omega(w)$ yields
\begin{align*} %
    \EE_C\left[ (\Sc-\Sb)^2 \Delta f(S)\right] &= \left(\cO{x^2w^{3/2}} + \cO{w^{1/2}}\right) \sum_{s=0}^w \cEws |\Delta f(s)| \\
    &= \cO{x^2w^{-1/2}} + \cO{n^{-1}w^{-1/2}}
\end{align*}
Applying these to \cref{eq:ber-final-bd}, we obtain:
\begin{align*} 
    \left|\mu_0(w/2) - \mu_c(w/2)\right| \le \cO{x^2w^{-1/2}} + \cO{n^{-1}w^{-1/2}}  %
\end{align*}
The desired strong bound \eqref{eq:2mm-goals} then follows from the identities \eqref{eq:ber-stein-implies-quadUB}. Indeed, under the unconditioned law $\pp_0$, the variables $\Sb$ and $\Sc$ are independent hypergeometric variables
\[ 
    \Sb \sim H\left(\frac{w}{2}; \frac{\gamma n}{2}, \frac{n}{2}\right), \quad \Sc \sim H\left(\frac{w}{2}; \frac{\beta n}{2}, \frac{n}{2}\right)
\]
So, recalling $\psi = \mu_0(w/2) = \cT{w^{-1/2}}$, 
\[
    \phi\left(\frac{1}{2}+x\right) = \psi \mu_c(w/2) = \mO{\frac{1}{n}}\psi^2\mO{x^2}\,.
\]
This verifies the assumptions of Lemma~\ref{2mm} and proves the claim.
We conclude by proving \cref{ber-with-indicator-bd} and \cref{ber-with-indicator-bd-square}.

\begin{proof}[Proof of \cref{ber-with-indicator-bd}]
We proceed identically to the proof of \cref{po-with-indicator-bd}, except with the simplification of only considering $k = 0$. 

Let $F(c) \defeq \pp_c[\Sc = c,~ S = s] = \pp_c[\Sc = c, \Sb = s - c]$.
We can write this explicitly as a product of hypergeometric densities:
\begin{align*}
    F(c) &= \binom{\beta n/2}{c}\binom{\gamma n/2}{w/2-c}\binom{\beta n /2}{w/2 - (s-c)} \binom{\gamma n /2 }{s-c}\binom{n/2}{w/2}^{-2}
\end{align*}
Fix $s$ with $0 \leq s \leq w$. We aim to show the inequality
\begin{align}
    \E[c]{ (\Sc-\Sb) \ind_{S = s}} &=: \sum_{c = 0}^{s}\left(2c - s\right)F(c) \le \cO{xw^{1/2}}\cEws \label{eq:ber-with-indicator-goal}
\end{align}

We begin by giving a crude tail bound on $F$. Using \cref{hyp_stirling} yields
\begin{lemma}\label{ber-F-tail}
For all $c$,
    \begin{equation*}
        F(c) 
        =\cO{ \frac{1}{w}} \cE{-\cT{\frac{(s-w/2)^2 + (c - \beta s)^2}{w}}} 
    \end{equation*}
\end{lemma}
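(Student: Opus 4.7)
The plan is to reduce this tail bound to two one-dimensional hypergeometric tail bounds, exactly as the Poisson analogue \cref{po-G-tail} reduces to two binomial tail bounds. The first step is to regroup the four binomial factors in the definition of $F$ and write
\[
    F(c) \;=\; \PP{H_1 = c} \cdot \PP{H_2 = s-c},
\]
where $H_1$ is hypergeometric with population size $n/2$, $\beta n/2$ marked items, and sample size $w/2$, and $H_2$ is the same but with $\gamma n/2$ marked items. Their means are $\beta w/2$ and $\gamma w/2$ respectively, and in the regime of interest ($\beta$ bounded away from $\{0,1\}$ and $w \le 0.49 n$) both variances are $\cT{w}$.

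Next, I apply the hypergeometric local-CLT/Cram\'er-type estimate (\cref{hyp_stirling}) to each factor, obtaining
\[
    \PP{H_1 = c} = \cO{w^{-1/2}} \cE{-\cT{\tfrac{(c - \beta w/2)^2}{w}}}, \qquad \PP{H_2 = s-c} = \cO{w^{-1/2}} \cE{-\cT{\tfrac{(s - c - \gamma w/2)^2}{w}}}.
\]
Exactly as in the three-case split at the end of the proof of \cref{po-G-tail}, I then dispatch the resulting product by cases on whether $|s - w/2|$ and $|c - \beta s|$ are $\cO{\sqrt{w}}$ or much larger, so that the $\cO{w^{-1/2}}$ polynomial prefactor in each factor is absorbed into either the $\cO{w^{-1}}$ prefactor or the exponential decay of the target bound.

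The only mildly nontrivial step is verifying the algebraic identity
\[
    (c - \beta w/2)^2 + (s - c - \gamma w/2)^2 \;=\; \cT{(c - \beta s)^2 + (s - w/2)^2}.
\]
Setting $j := c - \beta s$ and $\dd := s - w/2$, a direct substitution gives $c - \beta w/2 = j + \beta \dd$ and $s - c - \gamma w/2 = -j + \gamma \dd$, so the left side equals $2j^2 + 2(\beta - \gamma) j \dd + (\beta^2 + \gamma^2) \dd^2$. The associated symmetric matrix has determinant $(\beta + \gamma)^2 = 1$ and trace $2 + \beta^2 + \gamma^2 \le 3$, so its eigenvalues are bounded away from $0$ and $\infty$ uniformly in $\beta \in [\delta, 1-\delta]$, giving the claim. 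No step here is a serious obstacle; the decomposition and the hypergeometric tail bounds are routine, and the delicate estimation is deferred to \cref{ber-with-indicator-bd}, which is where this tail bound is ultimately used as input.
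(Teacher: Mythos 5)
Your proof is correct and follows essentially the same route as the paper, which simply notes that the argument is that of \cref{po-G-tail} with $k=0$ and with \cref{hyp_stirling} replacing \cref{cramer}; your factorization of $F$ into two hypergeometric densities is exactly the intended decomposition. Your uniform positive-definiteness check of the quadratic form $2j^2+2(\beta-\gamma)j\dd+(\beta^2+\gamma^2)\dd^2$ is a slightly cleaner way to handle the exponent than the case split in \cref{po-G-tail}, but it is the same estimate.
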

As in \cref{po-with-indicator-bd}, we consider three cases for the size of $|x|$. \\

\paragraph{\textbf{Case 1}: $|x| \geq w^{-1/2}$} Just as in \cref{po-with-indicator-bd}, \cref{ber-F-tail} is already enough when $|x| \geq w^{-1/2}$. The calculation is identical, but with \cref{ber-F-tail} in place of \cref{po-G-tail}, so we omit the details. \\

\paragraph{\textbf{Case 2}: $|x| \leq w^{-1}$}
For the remaining cases, we again exploit the symmetry $c \to s-c$ just as in \cref{po-with-indicator-bd}.
\begin{align}
    \E[c]{ (\Sc-\Sb) \ind_{S = s}} &= \frac{1}{2}\left[\sum_{c = 0}^{s}\left(2c - s\right)F(c) + \sum_{c = 0}^{s}\left(2c - s\right)F(c)\right] \nonumber \\
    &= \frac{1}{2}\sum_{c = 0}^{s}\left(2c - s\right)\left[F(c) -F(s-c)\right] 
    \label{eq:ber-F-diff}
\end{align}

If $x = 0$, then \eqref{eq:ber-F-diff} is identically zero, proving the claim.
So assume that $x \neq 0$, which implies that $|xn|$ is a positive integer.
If $x > 0$, then expanding the definition of $F$ yields
\begin{align*}
    \frac{F(c)}{F(s-c)} &= \frac{\binom{\beta n/2}{c}\binom{\gamma n/2}{w/2-c}\binom{\beta n /2}{w/2 - (s-c)} \binom{\gamma n /2 }{s-c}}{\binom{\gamma n/2}{c}\binom{\beta n/2}{w/2-c}\binom{\gamma n /2}{w/2 - (s-c)} \binom{\beta n /2 }{s-c}} \\
    &= \prod_{r = -xn+1}^{xn}\left(\frac{\frac{n}{4} - \frac{w}{2} + c + r}{ \frac{n}{4} -  c + r   } \right)\left(\frac{\frac{n}{4} - s + c + r}{\frac{n}{4} - \left(\frac{w}{2} - s + c\right) + r}\right) \\
    &= \prod_{r = -xn+1}^{xn}\left(1 + \cO{\frac{2c - w/2}{n}} \right) \left(1 + \cO{\frac{w/2  + 2c - 2s}{n}} \right) \\ 
    &= \originalleft(1 + \cO{\frac{2c-w/2}{n}} + \cO{\frac{w/2-2(s-c)}{n}}\originalright)^{\cT{xn}}
\end{align*}  
If $x < 0$, then the numerator and denominator of the product in the second line are flipped, but the third line is reached unchanged. So, regardless of the sign of $x$, this computation holds. Then, recalling $|x| \le w^{-1}$, we are justified in using a first-order Taylor expansion:
\[
    \frac{F(c)}{F(s-c)} = 1 + \cO{x(2c-w/2)} + \cO{x(w/2-2(s-c))} = 1 + \cO{x^2s} + \cO{x(c - \beta s)} + \cO{x(w/2-s)}\,,
\]
where the second equality follows from the fact that
\begin{equation*}
|2c - w/2| \leq |w/2 - s| + 2|c - \beta s| + 2|xs|\,,
\end{equation*}
and the analogous bound for $|w/2 - 2(s-c)|$.
Since $|x| \leq w^{-1}$, we have $x^2 s = \cO{x}$ and $|2c - s| \leq 2|c - \beta s| + 1$.
Applying \cref{ber-F-tail,gaussian-sum}, we obtain
\begin{align*}
    \sum_{c = 0}^s (2c - s) (F(c) - F(s-c)) &= \cO{\frac x w} \sum_{c = 0}^s \left(\cO{(c-\beta s)^2} + \cO{(w/2-s)^2} \right) \e^{-\cT{\frac{(c - \beta s)^2 + (s - w/2)^2}{w}}}\\
    & = \left(\cO{x w^{1/2}} + \cO{x w^{-1/2} (s - w/2)^2}\right) \cEws \\
    & = \cO{x w^{1/2}}\cEws\,,
\end{align*}
where the last step uses the fact that, for any positive constant $C$,
\begin{equation*}
y^2 \e^{- C y^2/w} = \cO{w} \e^{- C y^2/2w}\,.
\end{equation*}
This finishes the proof of the second case. \\

\paragraph{\textbf{Case 3}: $w^{-1} \le |x| \le w^{-1/2}$}
Again our plan is to derive a careful estimate for $F(c)$ near $c = \beta s$ via a Taylor expansion of Stirling's formula. Because of the tail estimate \cref{ber-F-tail}, as in the proof of \cref{po-with-indicator-bd} we may again restrict to the region where $|c - \beta s| = \cO{\sqrt{s\log s}}$ and $|s -w/2| = \cO{\sqrt{s\log s}}$.

Following precisely the steps of the proof of Case 3 in \cref{po-with-indicator-bd}, using the tail estimate \cref{ber-F-tail} in place of \cref{po-G-tail}, we obtain that
\begin{equation}\label{eq:ber-case3-goal}
\E[c]{(\Sc-\Sb)\ind_{S = s}} = \sum_{|c - \beta s| = \cO{\sqrt{w \log w}}} (2c - 2\beta s) (F(c) - F(s+2[xs] - c)) + \cO{xw^{1/2}}\cEws\,,
\end{equation}

The only difference from \cref{po-with-indicator-bd} is that we are dealing with a product $F$ of hypergeometric distributions rather than the product $G$ of binomial distributions. However, $F$ and $G$ are conveniently related because they describe hypergeometric and binomial distributions respectively with the same mean and number of trials. Denote $G(c) := G(0,c)$. An explicit expansion of the hypergeometric density~\cite{pinskynormal} yields:
\begin{align}
    F(c) &= G(c)R_\beta(c)R_\gamma(s-c),\label{eq:ber-G-F} \\ 
    R_\beta(c) &:= \frac{\prod_{t = 1}^{c-1}\left(1 - \frac{t}{\beta n /2}\right) \prod_{t = 1}^{w/2-c-1}\left(1 - \frac{t}{\gamma n /2}\right)}{\prod_{t = 1}^{w/2-1}\left(1 - \frac{t}{n/2}\right)} \nonumber
\end{align}
Let us write $\tilde s = s + 2[xs]$. 
Since $|G(c) - G(\tilde s-c)|$ was already bounded in \cref{po-with-indicator-bd}, it suffices to bound $R_\beta(c)R_\gamma( s-c)-R_\beta(\ts-c)R_\gamma(s - \tilde s + c) $.

We temporarily define an unorthodox convention for the product that will save us much case work. If $a < b$, then define $\prod_{i=a}^b x_i = x_ax_{a+1}...x_b$ as usual. However, if $a > b$, we define $\prod_{i=a}^b x_i = \prod_{i=b+1}^{a-1}x_i^{-1}$. Then, expanding the definition of $R$,  
\begin{align*}
    \frac{R_\beta(c)}{R_\gamma(s-\ts+c)} &= \frac{\prod_{t=1}^{c-1}\left(1 - \frac{t}{\beta n /2}\right) \prod_{t = 1}^{w/2-c-1}\left( 1 - \frac{t}{\gamma n /2}\right)} { \prod_{t = 1}^{s-\ts + c -1}\left(1 - \frac{t}{\gamma n /2}\right) \prod_{t=1}^{w/2-s+\ts-c-1}\left(1 - \frac{t}{\beta n /2}\right)} \\
    &= \prod_{t = 1}^{s-\ts + c -1}\left(\frac{1 - \frac{t}{\beta n /2}}{1 - \frac{t}{\gamma n /2}}\right) \prod_{t = s-\ts+c}^{c-1} \left(1 - \frac{t}{\beta n /2}\right)  \prod_{t = 1}^{w/2-c -1}\left(\frac{1 - \frac{t}{\gamma n /2}}{1 - \frac{t}{\beta n /2}}\right) \prod_{t = w/2-c}^{w/2-s+\ts-c-1} \left(1 - \frac{t}{\gamma n /2}\right)
\end{align*}
A similar expansion yields:
\begin{align*}
    \frac{R_\gamma(s-c)}{R_\beta(\ts-c)} &= \prod_{t = 1}^{\ts - c -1}\left(\frac{1 - \frac{t}{\gamma n /2}}{1 - \frac{t}{\beta n /2}}\right) \prod_{t = \ts-c}^{s-c-1} \left(1 - \frac{t}{\gamma n /2}\right)  \prod_{t = 1}^{w/2-\ts+c -1}\left(\frac{1 - \frac{t}{\beta n /2}}{1 - \frac{t}{\gamma n /2}}\right) \prod_{t = w/2-\ts+c}^{w/2-s+c-1} \left(1 - \frac{t}{\beta n /2}\right)
\end{align*}
In total, our goal is to bound how far the following expression is from one.
\begin{align}
    \frac{R_\beta(c)R_\gamma(s-c)}{R_\gamma(s-\ts+c)R_\beta(\ts-c)} &= \prod_{t = \ts-c}^{s-\ts + c -1}\left(\frac{1 - \frac{t}{\beta n /2}}{1 - \frac{t}{\gamma n /2}}\right) \prod_{t = w/2-\ts+c}^{w/2-c -1}\left(\frac{1 - \frac{t}{\gamma n /2}}{1 - \frac{t}{\beta n /2}}\right) \prod_{t = s-\ts+c}^{c-1} \left(1 - \frac{t}{\beta n /2}\right) \label{eq:ber-R-goal} \\
    &\quad \times  \prod_{t = w/2-c}^{w/2-s+\ts-c-1} \left(1 - \frac{t}{\gamma n /2}\right) \prod_{t = \ts-c}^{s-c-1} \left(1 - \frac{t}{\gamma n /2}\right) \prod_{t = w/2-\ts+c}^{w/2-s+c-1} \left(1 - \frac{t}{\beta n /2}\right) \nonumber
\end{align}
We bound this expression in several parts. Throughout, we will repeatedly use the arithmetic fact that $(xw)^2 + |xw(c-\beta s)| = \cO{(xw)^2 + (c-\beta s)^2}$. Observe that $2c-\ts + s-\ts = \cO{xs} + \cO{c -\beta s}$. Noting $xt/n = \cO{w^{1/2}/n} = \oo(1)$, we may apply a first-order Taylor expansion:
\begin{align}
    \prod_{t = \ts-c}^{s-\ts + c -1}\left(\frac{1 - \frac{t}{\beta n /2}}{1 - \frac{t}{\gamma n /2}}\right) \prod_{t = w/2-\ts+c}^{w/2-c -1}\left(\frac{1 - \frac{t}{\gamma n /2}}{1 - \frac{t}{\beta n /2}}\right) &= \prod_{t = \ts-c}^{s-\ts + c -1} \left(1 + \cO{\frac{xt}{n}}\right) \prod_{t=w/2-\ts+c}^{w/2-c-1} \left(1 + \cO{\frac{xt}{n}}\right) \nonumber \\
    &= \cO{\cE{\cO{\frac{xw(|xs| + |c -\beta s|)}{n}}}} \nonumber \\
    &= 1 + \cO{\frac{x^2w^2 + (c-\beta s)^2}{n}}\cE{\frac{xw(c-\beta s)}{n}}\label{eq:ber-R-1}
\end{align}
In the last line, we used that $x^2w^2/n = \cO{1}$ as well as the inequality $e^y \le 1 + ye^y$ for all $y \in \RR$.
Next, by our product notation convention,
\begin{align*}
    \prod_{t=s-\ts+c}^{c-1} \left(1 - \frac{t}{\beta n /2}\right)  \prod_{t=\ts-c}^{s-c-1} \left(1 - \frac{t}{\gamma n /2}\right)
    &= \prod_{t=s-\ts+c}^{c-1} \left( \frac{1 - \frac{t}{\beta n /2}}{1 - \frac{t + 2c-\ts}{\gamma n /2}} \right) \\
    &= \prod_{t=s-c}^{c-1} \left(1 + \cO{\frac{xt}{n}} + \cO{\frac{2c-\ts}{n}}\right)
\end{align*}
Both $xt/n = \oo(1)$ and $(2c-s)/n = \oo(1)$, so we may apply a first-order Taylor expansion again. Since $|s-\ts| = \cO{xw}$,
\begin{align} 
    \prod_{t = s-\ts+c}^{c-1} \left(1 - \frac{t}{\beta n /2}\right) \prod_{t = \ts-c}^{s-c-1} \left(1 - \frac{t}{\gamma n /2}\right) &= 1 + \cO{\frac{xw(2c-s)}{n}} + \cO{\frac{(2c-s)^2}{n}} \nonumber \\
    &= 1 + \cO{\frac{x^2w^2 + (c-\beta s)^2}{n}} \label{eq:ber-R-2}
\end{align} 
The last line follows from observing $2c-s= 2(c-\beta s + xs)$, and hence $2c-s = \cO{c-\beta s} + \cO{xw}$. An identical computation also yields control over the remaining products in \eqref{eq:ber-R-goal}.
\begin{equation} \label{eq:ber-R-3}
    \prod_{t = w/2-c}^{w/2-s+\ts-c-1} \left(1 - \frac{t}{\gamma n /2}\right) \prod_{t = w/2-\ts+c}^{w/2-s+c-1} \left(1 - \frac{t}{\beta n /2}\right) = 1 + \cO{\frac{x^2w^2 + (c-\beta s)^2}{n}}
\end{equation}
Hence the contribution of \eqref{eq:ber-R-1} dominates those of \eqref{eq:ber-R-2} and \eqref{eq:ber-R-3}. Returning to \eqref{eq:ber-R-goal},
\begin{equation}\label{eq:ber-R-final}
    \frac{R_\beta(c)R_\gamma(\ts-c)}{R_\beta(s-c)R_\gamma(c+s-\ts)} = 1 + \cO{\frac{x^2w^2 + (c-\beta s)^2}{n}}\cE{\frac{xw(c-\beta s)}{n}}
\end{equation}

This bound is sufficient to control $R$. Next, recall that by \cref{G-expansion} from \cref{po-with-indicator-bd}, 
\[
    |G(c) - G(\ts-c)| \le G(c)E(j,\dd)\,,
\]
where we have set $\dd = s - w/2$ and $j = c - \beta s$.
Returning to \eqref{eq:ber-G-F} with this fact and \eqref{eq:ber-R-final}, and then using the tail bound on $F$ given in \cref{ber-F-tail}, we have:
\begin{align*}
    \left|F(c) - F(\ts-c)\right| &\le F(c) \left(E(j,\dd) + \cO{\frac{x^2w^2 + j^2}{n}}e^{\frac{xwj}{n}} \right) \\ 
    &= \cO{\frac{1}{w}}\cE{-\cT{\frac{j^2 + \dd^2}{w}}} \left(E(j,\dd) + \cO{\frac{x^2w^2 + j^2}{n}}e^{\frac{xwj}{n}} \right)
\end{align*}
Apart from the $\frac{x^2w^2 + j^2}{n}e^{\frac{xwj}{n}}$ term, this is same bound obtained as equation \eqref{eq:po-G-diff-final}  in \cref{po-with-indicator-bd}. From here, the proof in \cref{po-with-indicator-bd} may be followed exactly to see:
\begin{align}
    \sum_c (2c-s)|F(c)-F(s-c)| &= \cO{xw^{1/2}}e^{-\cT{\frac{\dd^2}{w}}}  \label{eq:ber-F-diff-sum} \\ &\quad+  \sum_c \cO{\frac{(|xs| + |j|)(x^2w^2 + j^2)}{nw}}e^{\cO{\frac{xwj}{n}}} e^{-\cT{\frac{j^2 + \dd^2}{w}}} \nonumber
\end{align}
Let us simplify the exponents. Recalling $x = \cO{w^{-1/2}}$ and $n = \cOm{w}$, we have $xwj/n = \cO{j/\sqrt{w}}$. Fixing $b>0$ large enough, if $j \ge b\sqrt{w}$, then $e^{\cO{\frac{xwj}{n}}} e^{-\cT{\frac{j^2 + \dd^2}{w}}} = e^{-\cT{\frac{j^2 + \dd^2}{w}}}$
And, if $j \le b \sqrt{w}$, then $e^{\cO{\frac{xwj}{n}}} = \cO{1}$. So, in total, we have by \cref{gaussian-sum}, 
\begin{align*}
    \sum_c \cO{\frac{(|xs| + |j|)(x^2w^2 + |xwj|)}{nw}}e^{\cO{\frac{xwj}{n}}} e^{-\cT{\frac{j^2 + \dd^2}{w}}} &= \sum_c \cO{\frac{(|xs| + |j|)(x^2w^2 + |xwj|)}{nw}}e^{-\cT{\frac{j^2 + \dd^2}{w}}} \\
    &= \cO{xw^{1/2}} e^{-\cT{\frac{\dd^2}{w}}}
\end{align*}
Combining this with \eqref{eq:ber-F-diff-sum} and \eqref{eq:ber-case3-goal}, we are done. This completes the proposition for all possible $x$.
\end{proof} 

\begin{proof}[Proof of \cref{ber-with-indicator-bd-square}] 
Our goal is to bound:
\begin{equation*}
    \E[c]{ (\Sb-\Sc)^2\ind_{S=s}} = \sum_{c =0}^s (2c - s)^2 F(c) = 4\sum_{c =0}^s (c - \beta s + x s)^2 F(c) = \cO{\sum_{c =0}^s (x^2s^2+(c - \beta s)^2) F(c)}
\end{equation*}
We can directly apply the crude tailbound \cref{ber-F-tail} on $F$ to conclude. 
By \cref{gaussian-sum},
\begin{align*}
    \E[c]{ (\Sb-\Sc)^2\ind_{S=s}} &\le \cO{\frac{1}{w}}\cEws \sum_{c=0}^s 
    (x^2s^2+(c - \beta s)^2) e^{-\cT{\frac{(c - \beta s)^2}{w}}} \\
    &= \left(\cO{x^2w^{3/2}} + \cO{w^{1/2}}\right) \cEws\,.
\end{align*}
\end{proof}

\subsection{Proofs of lemmas}

\begin{proof}[Proof of \cref{ber-stein-inv}] The proof is almost identical to that of \cref{po-f-bound}.
Let $S :=S(\sigma)$ and $S' := S(\sigma')$. A simple computation yields:
\begin{align}
\begin{split}\label{eq:ber-alpha,beta}
    \pp_0\left[S' > S ~|~S\right]
    &= \frac{1}{w(n-w)} \left(\frac{wn}{2}-S \left(w + \frac{n}{2}\right) + S^2\right) =: \frac{a_S}{w(n-w)} \\
    \pp_0\left[S' < S ~|~S\right] &=  \frac{1}{w(n-w)}\left(S\left(\frac{n}{2} - w\right) + S^2\right) =: \frac{b_S}{w(n-w)}
\end{split}
\end{align}
Similarly,
\begin{align*}
    \pp_c\left[S>S' ~|~\Sb,~\Sc \right] &= \frac{2}{w(n-w)}\left((\Sb)^2 + (\Sc)^2 -(\Sb+\Sc)\frac{w}{2}+ \Sc\left(\frac{n}{2}-r\right) + \Sb r\right) \\
    \pp_c\left[S<S' ~|~\Sb,~\Sc\right] &= \frac{2}{w(n-w)}\left((\Sb)^2 + (\Sc)^2 + \frac{wn}{4} -(\Sb+\Sc)\frac{w}{2} - \Sb\left(\frac{n}{2}-r\right) -\Sc r\right)
\end{align*}
Recall the definitions of the skew-symmetric operator $\antisym$ and $T_0$ given in \eqref{eq:def-stein-op}. Then
\begin{equation}\label{eq:T0}
    T_0 f(S) = a_S f(S+1) -b_S f(S)
\end{equation}
Recall $\Delta f(S)$ is the one-step forward difference operator. For any function $f$ that depends only on $S$,
\begin{equation} \label{eq:T-diff}
    T_c f(\sigma) - T_0 f(\sigma)= \left[(\Sb-\Sc)^2 -n (\Sc-\Sb)\left(\beta - \frac{1}{2}\right)\right] \Delta f(S)
\end{equation}
We aim to find an $f$ such that $T_0f = \ind(S = w/2) - \mu_0(w/2)$. Since $S(\sigma)$ under $\mu_0$ is a hypergeometric random variable with $w$ trials, success population $n/2$ and total population $n$, it is easy to check that taking $\mu := \mu_0(S)$, $a_S$ and $b_S$ as given in \eqref{eq:ber-alpha,beta}, and $T$ as in $\eqref{eq:T0}$ satisfies \eqref{eq:lemma-stein-inv-condit}. Thus, we may apply Lemma \ref{stein-inv} with $t := w/2$ to obtain such an $f$. Taking the expectation of both sides of \eqref{eq:T-diff} with this choice of $f$ yields the first desired claim, \eqref{eq:ber-final-bd}. 

Next, we have by definition of $a$ and $b$, as well as the assumption that $n-w=\cT{n}$,
\[
    \Delta f(w/2) \le \min\left(a_{w/2}^{-1},b_{w/2}^{-1}\right) = \frac{4}{(n-w)w} = \cO{ \frac{1}{nw}}
\]
The second desired claim, \eqref{eq:ber-Df-L1}, then directly follows from the third and fourth parts of \cref{stein-inv}:
\[
    \sum_s |\Delta f(w/2)| = \cO{\frac{1}{nw}}
\]
\end{proof}

\begin{proof}[Proof of \cref{ber-F-tail}]
Except for two changes, the proof is the same as that of \cref{po-G-tail}. First, we now have the simplification $k = 0$. Second, we use \cref{hyp_stirling} in place of \cref{cramer}.
\end{proof}

\appendix
\section{Omitted proofs}\label{proofs}

\subsection{Proof of \cref{parity_coupling} }
Let $\mu$ be either the Poisson or Binomial distribution on $\ZZ$. We will construct a variable $X \sim \mu$ as well as another random variable $X'$ satisfying $|X-X'| \le 1$ almost surely, such that $X'$ has the same distribution as $X$ conditioned on the event that $X$ is even. If we can construct such a coupling $(X,X')$, then we can couple a matrix $A$ (from either the Bernoulli or Poisson ensemble) with $A'$ from the same ensemble conditioned on $P$ (the event of even row parities), such that each row of $A$ and $A'$ differ in at most one entry.

To construct this coupling, consider a matrix $A$ drawn from either the Poisson on Bernouli ensemble and write $X_i$, $i = 1, \dots, m$ for its row sums. For each $i$, use the above coupling to construct an $X_i'$ with $|X_i - X_i'| \le 1$ and $X_i'$ even. For $A$ drawn from the Bernoulli ensemble, copy the rows of $A$ to $A'$ with the following modification: if $X_i' = X_i-1$, flip a one to a zero uniformly at random from row $i$. If $X_i' = X_i+1$, then flip a zero to a one uniformly at random. For $A$ drawn from the Poisson ensemble: if $X_i' = X_i-1$, decrement a non-zero entry of the row $i$, with entry $A_{ij}$ picked with probability proportional to $A_{ij}$. If $X_i' = X_i+1$, increment an entry of row $i$ uniformly at random.

It remains to construct the desired coupling $(X, X')$. We use a construction due to Pinelis~\cite{pinelis}. Denote by $\mu'$ the distribution induced by conditioning $\mu$ on being even, and define the shorthand $\mu_i := \mu(i)$, $\mu_i' := \mu'(i)$. The coupling is defined as follows:
\begin{align*}
    \PP{X = 2j, X' = 2j} &= \mu_{2j} \\
    \PP{X = 2j-1, X' = 2j} &= t_{2j} \\
    \PP{X = 2j-1, X' = 2(j-1)} &= \mu_{2j-1} - t_{2j}
\end{align*}
We claim that we can define $t$ to satisfy the following conditions for all $j \in \ZZ \cap [-1,\infty]$:
\begin{align}
    &\mu_{2j}' = \mu_{2j} + t_{2j} + \mu_{2j+1} - t_{2j+2} \label{eq:marg} \\
    &0 \le t_{2j} \le \mu_{2j-1}\label{eq:prob}
\end{align}
If such a $t$ exists, then $(X,X')$ is clearly the desired coupling, so let us construct it.
First, let $Q$ be the probability that $X \sim \mu$ is even.
This probability is positive if $\mu$ is a Poisson distribution.
In the Binomial case, since we have assumed that $p \leq 1/2$ in the definition of the Bernoulli ensemble, we also have $Q > 0$.
If $Q = 1$, then $X$ is even almost surely and there is nothing to show, so we assume in what follows that $Q \in (0, 1)$.

Let $Q$ be the probability that $X \sim \mu$ is even; then $\mu_{2j}/Q = \mu_{2j}'$. Since both the binomial and Poisson distribution are supported on nonnegative integers, we will set $t_{j} = 0$ for all $j \le -2$. Now we construct $t$ to exactly satisfy \eqref{eq:marg}.
\[
    t_{2j+2} = t_{2j} + \mu_{2j}\left(1 - \frac{1}{Q}\right) + \mu_{2j+1}
\]
It remains to check that this construction satisfies \eqref{eq:prob}. We check the two inequalities individually. First,
\begin{align*}
    t_{2j} &= \sum_{i=-2}^{j-1} t_{2(i+1)}-t_{2i} = \sum_{i=-1}^{j-1} \mu_{2j}\left(1 - \frac{1}{Q}\right) + \mu_{2j+1} 
\end{align*}
From the expression we have just derived for $t$, it is clear that $t_{2j} \ge t_{2(j-1)}$ if and only if $\frac{\mu_{2j-1}}{\mu_{2j-2}} > \frac{1}{Q}-1$. For both the Poisson and Binomial cases, the sequence $\mu$ is log-concave:
\begin{equation*}
\mu_j^2 \geq \mu_{j-1} \mu_{j+1} \quad \forall j \in \ZZ\,,
\end{equation*}
which implies that the sequence of ratios $\frac{\mu_{2j-1}}{\mu_{2j-2}}$ is decreasing.
Therefore, the sequence $t_{2j}$ changes from increasing to decreasing at most once.
Since $t_{-1} = 0 = t_{\infty}$ and $t_0 > 0$, this implies that $t$ can never be negative.
Similarly, we can write a telescoping sum to check the other condition:
\begin{align*}
    t_{2j} - \mu_{2j-1} &= \sum_{i=-1}^{j-1} \mu_{2j}\left(1 - \frac{1}{Q}\right) + \mu_{2j-1} 
\end{align*}
By identical reasoning, this expression is always non-negative, so indeed \eqref{eq:prob} is satisfied. This completes the lemma. \QEDA \\

\subsection{Laplace's Method: Proof of Lemma \ref{2mm}}
By assumption, the columns of $A \sim \M$ are exchangeable, so $\psi_i$ and $\phi_i$ do not depend on particular choices of $u$ and $v$. We also assumed the rows of $A$ are independent, so the expectation of $Z$ is given by
\begin{equation}
    \E{Z}^2 = \left(\sum_{u \in \mathcal B} \PP{ u \in \bigcap_{i=1}^m G_i} \right)^2 = \binom{n}{n/2}^2 \prod_{i=1}^m \psi_i^2
\end{equation}

Turning to the second moment, the number of pairs of balanced vectors that are equal in $2r$ indices and different in the remaining $n/2-2r$ indices is:
\[
    \binom{n}{n/2}\binom{n/2}{r}^2
\]
The first coefficient gives the number of ways to pick a balanced vector $u$; then, looking at the $n/2$ coordinates each where $u$ is $+1$ and $-1$ we must pick $r$ coordinates from each for $v$ to agree with $u$ and have $v$ differ on the remaining coordinates.

We will break the second moment calculation into three regimes. Consider $r \in [n]$ and define $\beta = 2r/n$.
Let $\delta \in (0, 1/2)$ be small enough that $2 H(2\delta) \leq c$, where $c$ is the constant appearing in the first-moment bound~\eqref{eq:2mm-goals-expectation}.
Let $I_1 := \{r:~\beta \in (1/2-\eps, 1/2+\eps)\}$ be the central region, $I_2 := \{r:~ \beta \in [\del, 1-\del ] \setminus I_1\}$ be an annulus, and $I_3 := [n] \setminus (I_1 \cup I_2)\}$ be the remainder. Then:    
\begin{align*}
    \E{Z^2} &= \binom{n}{n/2}\sum_{r = 0}^{n/2}\binom{n/2}{r}^2 \prod_{i=1}^m \phi_{i}\left(\frac{2r}{n}\right) = \binom{n}{n/2} \sum_{r \in I_1 \sqcup I_2 \sqcup I_3}\binom{n/2}{r}^2 \prod_{i=1}^m \phi_{i}\left(\frac{2r}{n}\right)
\end{align*}
We begin by showing the annular region $I_2$ is negligible. Recall $\beta := 2r/n$ and let $x := \beta - 1/2$. Using the weak bound~\eqref{eq:2mm-goals-weak} and standard tail bounds on binomial coefficients (\cref{cramer}) yields
\begin{align}
    \binom{n}{n/2} \sum_{I_2} \binom{n/2}{\beta n /2}^2 \prod_{i=1}^m \phi_{i}\left(\beta\right) &\le  \binom{n}{n/2} \sum_{I_2} \binom{n/2}{n/4}^2 \e^{- \cT{x^2 n}} C_\delta^m \prod_{i=1}^m \psi_i^2 \nonumber \\
    &\le  \E{Z}^2 \cE{-\cT{n} +\cT{m}} \nonumber \\
    & = \E{Z}^2 \cE{-\cT{n}} \label{eq:2mm-I2}
\end{align}
The second line follows from the fact that $\beta$ is bounded away from $1/2$ on $I_2$, and the third line uses $m = \oo(n)$.
Now, we show the outermost region is also negligible. Using the trivial bound that $\phi_i(\beta) \le \psi_i$ for all $\beta$,
\begin{align}
    \binom{n}{n/2} \sum_{I_3}\binom{n/2}{r}^2 \prod_{i=1}^m \phi_{i}\left(\frac{2r}{n}\right) &\le n \E{Z} \binom{n/2}{\del n}^2 \nonumber\\
    &\le \E{Z} \e^{\mo{1}nH(2\del)} \nonumber \\
    & \le \E{Z}^2 \e^{-\mo{1}nH(2\del)}\,,\label{eq:2mm-I3}
\end{align}
where the final inequality uses the assumption that $\EE[Z] \geq \e^{c n} \geq \e^{2 n H(2\delta)}$.
Hence the contribution of $I_3$ is also exponentially small.

So, $I_1$ is the only region with meaningful contribution. The sum over $I_1$ can be understood through Laplace's method: away from $r = n/4$, the term $\binom{n/2}{r}$ decays exponentially in $n$ and $\prod_{i=1}^n \phi_i$ grows at most exponentially in $m$.
In order for the central term when $r = n/4$ to agree with $\EE[Z]^2$ to first order, we therefore need that $\phi_i(1/2) = (1 + \oo\left(m^{-1}\right))\psi_i^2$, and to ensure that the central term is the dominant one we need to show that $\phi_i(1/2+x)$ grows at most quadratically in a window of constant radius around $x = 0$.
This is the purpose of the inequality  \eqref{eq:2mm-goals}. Using \eqref{eq:2mm-I2} and \eqref{eq:2mm-I3} to ignore $I_2$ and $I_3$, and then applying \eqref{eq:2mm-goals} to bound $\phi$ in $I_1$, 
\begin{align}
    \E{Z^2} &\le \mo{1}\binom{n}{n/2}\sum_{r \in I_1} \binom{n/2}{r}^2 \prod_{i=1}^m \mo{\frac 1m}  \left(1 + C \left(\frac{2r}{n}-\frac{1}{2}\right)^2\right) \psi_i^2\nonumber \\
    &\le \mo{1} \binom{n}{n/2}^{-1}\E{Z}^2 \sum_{I_1}\binom{n/2}{r}^2 \exp \left( m C\left(\frac{2r}{n} - \frac{1}{2}\right)^2\right)\label{eq:2mm-I1}
\end{align}

Recall $m = \oo(n)$ and $I_1 = \{r:~|r-n/4| < \eps n\}$.
Using Stirling's approximation (\cref{stirling}), we obtain
\begin{align*}
    \sum_{|r-n/4| \le \eps n}\binom{n/2}{r}^2 e^{m C\left(\frac{2r}{n} - \frac{1}{2}\right)^2}
    &= \mo{1}\sum_{|r-n/4| \le \sqrt{n\log n}}\binom{n/2}{r}^2 \e^{m C\left(\frac{2r}{n} - \frac{1}{2}\right)^2} \\
    & = \mo{1}\frac{2^{n+2}}{n\pi} \sum_{|r-n/4| \le \sqrt{n\log n}} \e^{-\frac{8(r-n/4)^2}{n} + m C\left(\frac{2r}{n} - \frac{1}{2}\right)^2} \\
    & = \mo{1}\frac{2^{n+2}}{n\pi} \sum_{j \in \mathbb Z}\e^{-\frac{8j^2}{n}(1 + \cO{\frac{m}{n}})}\\
    & = \left(1 + \oo(1) + \cO{\frac m n}\right) \sqrt{\frac{2}{\pi n}} 2^n \sum_{j \in \mathbb Z} \e^{-\frac{n \pi^2 j^2}{8}(1 + \cO{\frac{m}{n}})}\,,
\end{align*}
where in the last line we have used the functional equation for the Jacobi theta function~\cite[Theorem 3.2]{SteSha03}.
Since $\sqrt{\frac{2}{\pi n}} 2^n = \mo{1} \binom{n}{n/2}$ and $\sum_{j \in \mathbb Z} \e^{-\frac{n \pi^2 j^2}{8}(1 + \cO{\frac{m}{n}})} = 1 + \oo(1)$, we have shown
\begin{equation*}
    \sum_{|r-n/4| \le \eps n}\binom{n/2}{r}^2 e^{m C\left(\frac{2r}{n} - \frac{1}{2}\right)^2} = \mo{1}\binom{n}{n/2}\,.
\end{equation*}
Returning to \eqref{eq:2mm-I1} and applying this bound for $I_1$, the lemma is established: $\E{Z^2} \le \left(1 + \oo(1)\right) \E{Z}^2$.
\QEDA \\

\subsection{Proof of \cref{stein-inv}}
Denote by $U_{s}$ the integers $\{0,...,s\}$ and use the shorthand $\mu_i := \mu(\{i\})$. Define $f$ by
\begin{equation}\label{eq:f-formula}
    f(0) = 0\,, \quad f(s) := \frac{\mu_t}{b_s\mu_s}\left(\ind_{t<s} - \mu(U_{s-1})\right) \quad \forall s \in \{1, \dots, w\}\,.
\end{equation}
Then, since $\mu_{s+1} = \mu_s a_s / b_{s+1}$, we have for all $s \in [w]$,
\begin{align*}
    a_s f(s+1) - b_s f(s) &= \mu_t \left(\frac{a_s}{b_{s+1}\mu_{s+1}}\ind_{t < s+1 } - \frac{1}{\mu_s} \ind_{t < s } - \frac{a_s}{b_{s+1}\mu_{s+1}}\mu(U_{s}) + \frac{1}{\mu_s} \mu(U_{s-1})\right)\\
    &= \frac{\mu_t}{\mu_s} \left[\left(\ind_{t < s+1 }- \ind_{t < s }\right) - \left(\mu(U_{s}) -\mu(U_{s-1}) \right)\right] \\
    &= \ind_{s = t} - \mu_t
\end{align*}
This establishes claim (a). Now we prove (b). Note that $a_s, b_s, \mu_s \ge 0$ for all $s$. So, if $s \le t$, then $f(s) = -\frac{\mu_t}{b_s\mu_s} \mu(U_{s-1}) \le 0 $. And, if $s \ge t+1$, then $f(s) = \frac{\mu_t}{b_s\mu_s} (1-\mu(U_{s-1})) \ge 0 $ since $\mu \le 1$. This establishes the sign of $f$. Next is monotonicity.
Since $t \geq 1$, we have $f(1) \leq 0 = f(0)$, so $f$ is non-increasing from $f(0)$ to $f(1)$.
Now, say $2 \leq s \le t$. We have:
\begin{align*}
    f(s-1) - f(s) = \mu_t\left(\frac{\mu(U_{s-1})}{b_{s}\mu_{s}} - \frac{\mu(U_{s-2})}{b_{s-1}\mu_{s-1}}\right) \ge \frac{\mu_t}{b_{s}\mu_{s}} \left(\sum_{i=1}^{s-1} \mu_i \left(1  - \frac{b_s \mu_s \mu_{i-1}}{b_{s-1}\mu_{s-1}\mu_i}\right) \right) \ge 0
\end{align*}
The last inequality follows from $i \le s-1 < t$ and the fact that $a$ is decreasing and $b$ is increasing:
\[
    \frac{b_s \mu_s \mu_{i-1}}{b_{s-1}\mu_{s-1}\mu_i} = \frac{a_{s-1}b_i}{a_{i-1}b_{s-1}} \le 1
\]
The argument for $s \ge t+1$ is almost identical and yields the reverse inequality. Now, we prove (c). By (b), we already know $\sup |\Delta f(s)| \le \Delta f(t)$. So, we just need to show that $\Delta f(t) \le \min\left(a_t^{-1}, b_t^{-1}\right)$.
\begin{align*}
    \Delta f(t) = a_t^{-1} \sum_{i = t+1}^w \mu_i +  b_t^{-1} \sum_{i = 0}^{t-1} \mu_i = a_t^{-1} \left(\sum_{i = t+1}^w \mu_i +   \sum_{i = 0}^{t-1} \frac{\mu_{i+1}b_{i+1}a_t}{a_i b_t}\right) \le  a_t^{-1} \left(\sum_{i = 1}^w \mu_i \right)
\end{align*}
The last inequality again uses the monotonicity of $a$ and $b$ and the fact $\mu$ sums to 1. The proof that $\Delta f(t) \le b_t^{-1}$ is essentially identical so we omit it. 

Finally, we turn to claim (d) of the lemma. We need three facts that we have already established: $\Delta f(s) < 0$ for all $s \neq t$; $f(s) \le 0$ for $s \le t$; and $f(s) \ge 0$ for $s > t$. Combining the first two facts, $\sum_{s \le t-1} |\Delta f(s)|$ telescopes and is bounded above by $|f(t)|$. Similarly, combining the second two facts, $\sum_{s \ge t+1} |\Delta f(s)| \le |f(t+1)|$. Since $|f(t+1)| + |f(t)| = f(t+1)-f(t) = |\Delta f(t)|$,
\begin{align*}
    \sum_{s} |\Delta f(s)| = \sum_{s \le t-1} |\Delta f(s)| + |\Delta f(t)| + \sum_{t+1 \le s} |\Delta f(s)| &\le |f(t)| + |\Delta f(t)| + |f(t+1)| \\
    &=  2|\Delta f(t)|\,.
\end{align*}
\QEDA \\

\section{Local limit theorems}\label{lemmas}
We collect several approximations which we use throughout.
\begin{lemma}[{De Moivre--Laplace~\cite[Theorem VII.3.1]{Fel68}}]\label{demoivre}
Let $p \in [\delta, 1-\delta]$ for a constant $\delta \in (0, 1/2)$, and write $q = 1 -p$.
Let $k = r - np$.
For any $r \in \{0, \dots, n\}$, it holds
\begin{equation*}
\binom{n}{r} p^r (1-p)^{n-r} = \left(1 + \cO{\frac 1n} \right) \frac{1}{\sqrt{2 \pi n p (1-p)}} \e^{-\frac{k^2}{2np(1-p)} + \cO{\frac kn } + \cO{\frac{k^3}{n^2}}}\,.
\end{equation*}
\end{lemma}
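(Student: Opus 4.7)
The plan is to apply Stirling's formula to each factorial in $\binom{n}{r} = \frac{n!}{r!(n-r)!}$, and then carefully Taylor-expand the resulting expression about $r = np$. Using Stirling's approximation $n! = \sqrt{2\pi n}(n/\e)^n \mO{1/n}$ and collecting factors, one obtains
\[
    \binom{n}{r} p^r q^{n-r} = \mO{\frac{1}{n}} \cdot \frac{1}{\sqrt{2\pi}}\sqrt{\frac{n}{r(n-r)}} \cdot \e^{-nD}, \qquad D \defeq \frac{r}{n}\log\frac{r}{np} + \frac{n-r}{n}\log\frac{n-r}{nq},
\]
valid for $1 \le r \le n-1$; the boundary cases $r \in \{0, n\}$ are handled by a direct check.

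First I would simplify the polynomial prefactor. Substituting $r = np + k$ gives $r(n-r)/n = npq + k(q-p) - k^2/n$, and, since $p$ is bounded away from $0$ and $1$, a first-order expansion yields
\[
    \sqrt{\frac{n}{r(n-r)}} = \frac{1}{\sqrt{npq}}\,\mO{\frac{|k|}{n}} = \frac{1}{\sqrt{npq}}\,\e^{\cO{k/n}},
\]
for $|k|$ bounded away from $\min(np, nq)$; the last equality uses $\mO{x} = \e^{\cO{x}}$ when $x$ is bounded. This accounts for the $\cO{k/n}$ error term in the claimed exponent.

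Next I would expand $nD$. Writing $r/n = p + k/n$, and using $\log(1+x) = x - x^2/2 + \cO{x^3}$ on $\log(1 + k/(np))$ and $\log(1 - k/(nq))$, the linear terms in $k$ cancel (because $p$ is the mean of the corresponding Bernoulli), while the quadratic terms combine to $-\frac{k^2}{2npq}$. The cubic and higher-order terms contribute $\cO{k^3/n^2}$. Combining this expansion with the prefactor yields the claimed formula.

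The main obstacle is ensuring the error terms are valid across the full range $r \in \{0, \dots, n\}$, including $|k| = \Theta(n)$, where the Taylor expansion of $\log(1+x)$ is no longer tight. In that regime, however, both $nD$ and the error term $\cO{k^3/n^2}$ are already of order $n$, so any bounded multiplicative discrepancy is absorbed into the slack in the exponent and the statement holds without refinement. For intermediate $|k|$ (roughly $\sqrt{n} \ll |k| \ll n$), where a sharp approximation is genuinely needed, one must carefully match the cubic Taylor remainder against the explicit $\cO{k^3/n^2}$ term, which is the main bookkeeping exercise.
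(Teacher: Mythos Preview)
The paper does not supply its own proof of this lemma: it is stated as a citation to Feller's textbook (Theorem~VII.3.1) and used as a black box. Your sketch---Stirling on each factorial, rewrite as $\e^{-nD}$ with $D$ the relative entropy, then Taylor-expand $D$ to third order in $k/n$---is exactly the classical derivation that Feller gives, and your handling of the regimes (the $\cO{k^3/n^2}$ slack absorbing everything once $|k|$ is comparable to $n$) is the right way to cover the full range. So your proposal is correct and coincides with the argument the paper is implicitly invoking.
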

Specializing to $p = 1/2$ yields the following simplified bound.
\begin{lemma}[{Stirling's approximation for Binomial coefficients~\cite[Equation (5.43)]{Spe14}}]\label{stirling}
If $|r - n/2| = \oo(n^{-2/3})$, then
\begin{equation*}
\binom{n}{r} = \mo{1}\sqrt{\frac{2}{\pi n}} 2^n \e^{-2(r-n/2)^2/n}\,.
\end{equation*}
\end{lemma}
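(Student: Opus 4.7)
The plan is to specialize \cref{demoivre} to $p = 1/2$ and verify that the error terms in the exponent simplify to $\oo(1)$ under the assumed range for $r$. (I read the hypothesis $|r - n/2| = \oo(n^{-2/3})$ as a typo for $|r - n/2| = \oo(n^{2/3})$, which is what makes the statement nontrivial and is what is used throughout the paper.)

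First, I set $p = q = 1/2$ in \cref{demoivre}. Writing $k = r - n/2$ and noting that $p(1-p) = 1/4$, the de Moivre--Laplace bound yields
\begin{equation*}
\binom{n}{r} 2^{-n} = \mO{\tfrac{1}{n}} \frac{1}{\sqrt{2 \pi n \cdot 1/4}} \exp\left(-\frac{2k^2}{n} + \cO{\tfrac{k}{n}} + \cO{\tfrac{k^3}{n^2}}\right).
\end{equation*}
The prefactor $\frac{1}{\sqrt{2\pi n /4}}$ equals $\sqrt{\frac{2}{\pi n}}$, which matches the claimed normalization once multiplied through by $2^n$.

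It then remains to check that the two error terms in the exponent are each $\oo(1)$. Under the hypothesis $|k| = \oo(n^{2/3})$, one has $k/n = \oo(n^{-1/3}) = \oo(1)$ and $k^3/n^2 = \oo(1)$, so both correction terms vanish in the limit. Combining with the $\mO{1/n} = \mo{1}$ multiplicative factor out front yields the statement:
\begin{equation*}
\binom{n}{r} = \mo{1}\sqrt{\frac{2}{\pi n}} 2^n \e^{-2(r-n/2)^2/n}\,.
\end{equation*}

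There is no real obstacle here: the whole content of the lemma is already contained in \cref{demoivre}, and all I am doing is specializing $p$ and absorbing the remaining $\cO{\cdot}$ terms into the $\mo{1}$ prefactor using the assumed upper bound on $|r - n/2|$. If I wanted to be more self-contained, I could instead invoke Stirling's formula $n! = \sqrt{2\pi n}(n/e)^n \mO{1/n}$ three times (for $n!$, $r!$, and $(n-r)!$) and then Taylor expand $r \log(r/n) + (n-r)\log((n-r)/n)$ about $r = n/2$ to third order; the cubic remainder is exactly what produces the $\cO{k^3/n^2}$ error above, and the same range $|k| = \oo(n^{2/3})$ is what allows it to be absorbed.
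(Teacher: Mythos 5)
Your proposal is correct and matches the paper's approach exactly: the paper introduces this lemma with the single line ``Specializing to $p = 1/2$ yields the following simplified bound,'' i.e.\ it is obtained from \cref{demoivre} precisely as you do, with the prefactor computation and the absorption of the $\cO{k/n}$ and $\cO{k^3/n^2}$ exponent corrections into the $\mo{1}$ factor. Your reading of the hypothesis as $|r-n/2| = \oo(n^{2/3})$ (rather than the typo $\oo(n^{-2/3})$) is also the right one, and is consistent with how the lemma is used elsewhere in the paper.
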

We also have a coarser estimate valid for all $r$.
\begin{lemma}[Gaussian tails for Binomial]\label{cramer}
In the same setting as \cref{demoivre}, for any $r \in \{0, \dots, n\}$,
\begin{equation*}
\binom{n}{r} p^r (1-p)^{n-r} = \cO{\frac{1}{\sqrt n}} \e^{-\frac{k^2}{4np(1-p)}}\,.
\end{equation*}
\end{lemma}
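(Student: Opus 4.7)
The plan is a standard two-regime argument: apply \cref{demoivre} directly when $|k|$ is small, and a classical Chernoff bound when $|k|$ is large. Fix a threshold $\epsilon = \epsilon(\delta) > 0$ to be chosen small enough depending on $\delta$.

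For $|k| \le \epsilon n$, I would invoke \cref{demoivre} directly. The prefactor $(2\pi n p(1-p))^{-1/2}$ is of order $n^{-1/2}$ with constant depending on $\delta$, and the $(1 + \cO{1/n})$ multiplicative factor is absorbed into this constant. The $\cO{k/n}$ additive error in the exponent is bounded by $\cO{\epsilon}$ and contributes only a further multiplicative constant. The cubic error satisfies $|k|^3/n^2 \le \epsilon \cdot k^2/n$, and since $p(1-p) \ge \delta(1-\delta)$, picking $\epsilon$ small enough depending on $\delta$ forces this quantity to be at most, say, $k^2/(8np(1-p))$. Combining with the main $-k^2/(2np(1-p))$ term, the total exponent is then bounded by $-k^2/(4np(1-p))$ up to an additive constant, yielding the claim on this regime.

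For $|k| > \epsilon n$, I would appeal to the classical Chernoff bound $\binom{n}{r} p^r (1-p)^{n-r} \le \exp(-n D(r/n \| p))$, where $D$ denotes Kullback--Leibler divergence. By continuity and compactness of $D$ on the closed set $\{(q,p) : p \in [\delta, 1-\delta],\, q \in [0,1],\, |q-p| \ge \epsilon\}$, there exists $c_\delta > 0$ with $D(q \| p) \ge c_\delta$ uniformly on this set. The left-hand side therefore decays like $\exp(-c_\delta n)$, which dominates the target $\cO{1/\sqrt n} \exp(-k^2/(4np(1-p)))$ in this regime once one absorbs the implicit constants into the $\cO{\cdot}$, which is permitted to depend on $\delta$.

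The main delicate point is the interplay between the two regimes: the cubic error in \cref{demoivre} requires $\epsilon$ to be small, while the Chernoff rate $c_\delta$ shrinks with $\epsilon$ and must still subsume the Gaussian tail $k^2/(4np(1-p))$ throughout the range $|k| > \epsilon n$. The relaxation from the sharp local-CLT coefficient $-k^2/(2np(1-p))$ to the looser $-k^2/(4np(1-p))$ in the statement is precisely what provides the slack needed for both bounds to meet cleanly at the threshold.
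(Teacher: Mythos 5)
Your central regime $|k|\le\epsilon n$ is handled correctly: the observation $|k|^3/n^2\le\epsilon\cdot k^2/n$ is exactly how the cubic error in \cref{demoivre} gets absorbed, and that is indeed where the relaxation from $\tfrac{1}{2}$ to $\tfrac{1}{4}$ in the exponent is spent. (The paper instead splits at $|k|\asymp n^{2/3}$, where the cubic error is $\oo(1)$, and uses a pointwise concentration bound of the form $\e^{-k^2/(2np(1-p))}$ in the tail, converting the leftover factor $\e^{-k^2/(4np(1-p))}\le\e^{-\cOm{n^{1/3}}}$ into the $n^{-1/2}$ prefactor.)

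The outer regime $|k|>\epsilon n$ has a genuine gap. Compactness gives you only the \emph{uniform} bound $D(r/n\,\|\,p)\ge c_\delta$, hence $\binom{n}{r}p^r(1-p)^{n-r}\le\e^{-c_\delta n}$; but what you must then verify is $\e^{-c_\delta n}\le \cO{n^{-1/2}}\,\e^{-k^2/(4np(1-p))}$, i.e.\ $c_\delta\ge \frac{k^2}{4n^2p(1-p)}+\cO{\tfrac{\log n}{n}}$ for \emph{every} $|k|>\epsilon n$. The right-hand side keeps growing quadratically in $k$ up to $\frac{\max(p,1-p)}{4\min(p,1-p)}$ at $r\in\{0,n\}$, whereas $c_\delta$, being the minimum of $D$ over the region, is attained at the boundary $|r/n-p|=\epsilon$ and is of order $\epsilon^2$ --- which you have just forced to be small for the central regime. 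Comparing the minimum of one exponent to the maximum of the other cannot work; the factor-of-two slack does not rescue this, contrary to your closing paragraph. What is actually needed is the \emph{pointwise} inequality $D(q\,\|\,p)\ge\frac{(q-p)^2}{4p(1-p)}$ with a little room to spare for the $n^{-1/2}$ prefactor. Via Pinsker, $D(q\,\|\,p)\ge 2(q-p)^2$, this holds whenever $p(1-p)\ge 1/8$; but it genuinely fails for small $\delta$: at $r=n$ the left side of the lemma is $p^n=\e^{-n\log(1/p)}$ while the target is $\cO{n^{-1/2}}\e^{-n(1-p)/(4p)}$, and $\log(1/p)<(1-p)/(4p)$ once $p$ is small. (This defect is shared by the lemma as stated and by the paper's own proof, whose ``Hoeffding'' bound with variance $np(1-p)$ in the denominator is likewise false for skewed $p$; the lemma is only ever invoked with $p$ near $1/2$, where Pinsker closes the argument.) To repair your proof, replace the compactness step by a pointwise comparison of $D$ with the quadratic, valid for the range of $p$ in which the lemma is actually applied.
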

\begin{proof}
When $k = \oo(n^{2/3})$, this bound follows from \cref{demoivre}.
When $k = \cOm{n^{2/3}}$, we employ Hoeffding's bound~\cite{hoeffding1994probability}; letting $X \sim \Bin(n, p)$,
\begin{equation*}
\binom{n}{r} p^r (1-p)^{n-r} \leq \PP{|X - \E X| \geq k} \leq 2 \e^{- \frac{k^2}{2np(1-p)}} = \cO{\frac{1}{\sqrt n}}\e^{-\frac{k^2}{4np(1-p)}}\,,
\end{equation*}
where the last step uses $\e^{- k^2/n} = \cO{\frac{1}{\sqrt n}}$ if $k = \cOm{n^{2/3}}$.
\end{proof}

\begin{lemma}[{Gaussian tail for Hypergeometric distribution~\cite[Theorem 2, (i)]{hypergeo}}]\label{hyp_stirling}
Consider a hypergeometric random variable $X \sim H(w;pn,n)$ with $w$ trials, $n$ total population, and $np$ population successes. There is some universal constant $K$ such that for any $\lambda > 0$, 
\begin{equation*}
    \PP{\sqrt{w}|X-pw| = \lambda} \le \frac{K}{\sqrt{n}} \cE{-\frac{2\lambda^2}{1-w/n}- \left(\frac{1}{4}+\frac{1}{3}\originalleft(\frac{w}{n-w}\originalright)^3\right)\frac{\lambda^4}{n}}
\end{equation*}
In particular, if $n-w = \cT{n}$,
\begin{equation*}
    \binom{np}{k}\binom{n(1-p)}{w-k}\binom{n}{w}^{-1} = \cO{\frac{1}{\sqrt{w}}} e^{-\cT{\frac{1}{w}}\left(k-wp\right)^2}
\end{equation*}
\end{lemma}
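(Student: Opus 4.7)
\smallskip
The first displayed inequality is the content of \cite[Theorem 2, (i)]{hypergeo}, which I would cite as a black box; the only work is to deduce the ``in particular'' conclusion from it, which amounts to routine bookkeeping.

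The plan has three brief steps. First, identify the ratio of binomial coefficients with a hypergeometric point mass: by the definition of the hypergeometric PMF,
\[
\binom{np}{k}\binom{n(1-p)}{w-k}\binom{n}{w}^{-1} = \PP{X = k}, \qquad X \sim H(w; np, n).
\]
Second, apply the first inequality to this point probability by taking $\lambda$ equal to the value corresponding to the event $\{X = k\}$ (so $\lambda$ is a fixed multiple of $|k - pw|$ in the normalization of the cited reference). Third, simplify the resulting expression using the hypothesis $n - w = \cT{n}$: this forces $1 - w/n = \cT{1}$, keeps the cubic factor $(w/(n-w))^3$ bounded, and lets me replace the $K/\sqrt{n}$ prefactor with the (larger) $\cO{1/\sqrt{w}}$ since $w \le n$.

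After these substitutions the leading Gaussian term reduces to $\cE{-\cT{1/w}(k - wp)^2}$. The quartic correction proportional to $\lambda^4/n$ is dominated by this leading Gaussian exponent in the regime $|k - wp| = \cO{\sqrt{w \log w}}$ where the lemma is actually used---for example in the proof of \cref{ber-F-tail} and the analogous tail bounds in \cref{sec:stein}---and for $|k - wp|$ outside this range the quartic correction only strengthens the tail estimate. I do not anticipate any genuine obstacle, since the content of the lemma is essentially a restatement of the cited sharp concentration inequality in a form convenient for our applications; the only care required is to verify that the normalization of $\lambda$ in \cite{hypergeo} matches the rescaling used above.
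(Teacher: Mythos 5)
The paper gives no proof of this lemma---both displays are attributed to the cited reference---so your plan of quoting the tail bound as a black box and deducing the ``in particular'' clause is exactly the paper's (implicit) treatment, and the deduction is sound: when $n-w=\Theta(n)$ the factor $1-w/n$ is $\Theta(1)$, the quartic term in the exponent is nonnegative and can simply be dropped (no case analysis on $|k-wp|$ is needed), and $K/\sqrt{n}\le K/\sqrt{w}$. The one genuine point of care, which you correctly flag but do not quite resolve, is the normalization: to land on the exponent $\Theta(1/w)(k-wp)^2$ you must take $\lambda=|k-pw|/\sqrt{w}$, a $w$-dependent rescaling rather than ``a fixed multiple of $|k-pw|$''---equivalently, the event in the first display should be read as $|X-pw|/\sqrt{w}=\lambda$, since the literal reading $\sqrt{w}\,|X-pw|=\lambda$ would produce an exponent of order $w(k-wp)^2$ instead.
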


\begin{lemma}[{Edgeworth Series for lattice sums \cite[Theorem 2]{petrov1964local}}]\label{edgeworth}
Let $\{X_i\}_{i \in \mathbb{N}}$ be independent identically distributed random variables with $X_1 \in \{-1,0,1\}$, $\E{X_1} = 0$, and $\E{X_1^2} = \sigma^2$. Denote by $\kappa_r$ the $r$-th cumulant of $X_1$, and denote by $\phi$ the density of standard unit Gaussian. Define the quantities
\[
    P_n(N) = \PP{\sum_{j=1}^n X_j = N}, \quad x = \frac{N}{\sigma \sqrt{n}}
\]
Then, there exists a collection of polynomials $\{q_{3\nu}(x)\}_{\nu \in \mathbb{N}}$, each of degree $3\nu$ with coefficients depending only on the moments of $X_1$ up to  order $\nu+2$ (inclusive), satisfying
\begin{equation*}
    \left| \sigma \sqrt{n} P_n(N) - \phi(x) - \sum_{\nu = 1}^{k-2} \frac{q_{3\nu}(x)\phi(x)}{n^{\nu/2}}\right| = \frac{\oo(1)}{\left(1 + |x|^k\right)n^{(k-2)/2}}
\end{equation*}
In particular, $q_{3\nu} = 0$ for any odd $\nu$, and the first non-zero $q$ is $q_6$ given by:
\begin{align*}
    q_{6}(x)\phi(x) &= -\left(\frac{1}{24}\lambda_4\phi^{(4)}(x) + \frac{1}{72}\lambda_3^2 \phi^{(6)}(x)\right)
\end{align*}
where $\lambda_r = \frac{\kappa_r}{\sigma^r}$ and $\phi^{(r)}$ is the $r$th derivative of the Gaussian density.
\end{lemma}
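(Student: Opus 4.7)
The plan is to prove the expansion by Fourier inversion on the lattice, following the standard approach of Gnedenko, Cramer, and Petrov. The key observation is that $X_1$ is supported on $\{-1,0,1\}$ with $\E X_1 = 0$ and $\E X_1^2 = \sigma^2$, which forces $\PP{X_1 = 1} = \PP{X_1 = -1} = \sigma^2/2$ and $\PP{X_1 = 0} = 1 - \sigma^2$. In particular, $X_1$ is symmetric, so all odd cumulants vanish; the characteristic function $\hat f(t) := \EE[\e^{itX_1}] = 1 - \sigma^2 + \sigma^2 \cos t$ is $2\pi$-periodic, real-valued and even, and satisfies $|\hat f(t)| < 1$ for $t \in (-\pi,\pi) \setminus \{0\}$.

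First, I would apply Fourier inversion to write
\begin{equation*}
P_n(N) = \frac{1}{2\pi}\int_{-\pi}^{\pi} \e^{-itN}\, \hat f(t)^n\, dt,
\end{equation*}
and then substitute $u = \sigma\sqrt{n}\, t$ (so the natural argument becomes $x = N/(\sigma\sqrt{n})$) to get
\begin{equation*}
\sigma\sqrt{n}\, P_n(N) = \frac{1}{2\pi}\int_{-\sigma\sqrt{n}\pi}^{\sigma\sqrt{n}\pi} \e^{-iux}\, \hat f\!\left(\frac{u}{\sigma\sqrt{n}}\right)^{\!n}\, du.
\end{equation*}
Next, I would split this integral into a \emph{central} region $|u| \le n^{1/6}$ and a \emph{tail} region. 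On the tail, the bound $|\hat f(t)| \le 1 - c\sigma^2 t^2$ for $|t|$ small, combined with the uniform bound $|\hat f(t)| \le 1 - \delta$ for $|t|$ bounded away from $0$, shows that both this integral and its Gaussian approximation decay faster than any polynomial in $n$.

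On the central region, I would use the cumulant expansion $\log \hat f(t) = \sum_{r \ge 2} \kappa_r (it)^r/r!$ to obtain
\begin{equation*}
n \log \hat f\!\left(u/(\sigma\sqrt n)\right) = -u^2/2 + \sum_{r \ge 3} \frac{\lambda_r (iu)^r}{r!}\, n^{1 - r/2},
\end{equation*}
where $\lambda_r = \kappa_r/\sigma^r$. Exponentiating and reorganizing by powers of $n^{-1/2}$ produces an expansion
$\hat f(u/\sigma\sqrt n)^n = \e^{-u^2/2}\sum_{\nu} n^{-\nu/2} R_\nu(iu)$,
where each $R_\nu$ is a polynomial in $iu$ whose coefficients depend only on cumulants of order $\le \nu + 2$. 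Integrating term-by-term against $\e^{-iux}$ and using the identity $\int \e^{-iux}(iu)^{2k}\e^{-u^2/2}\frac{du}{2\pi} = \phi^{(2k)}(x)$ converts each $R_\nu(iu)\e^{-u^2/2}$ into $q_{3\nu}(x)\phi(x)$ through the Hermite-polynomial correspondence. The vanishing of odd cumulants forces every $R_\nu$ with odd $\nu$ to vanish, matching the claim; and at order $1/n$ only two contributions appear, namely the single $\kappa_4$ term from the linear cumulant expansion and the squared-$\kappa_3$ term coming from $\frac12(\sum \kappa_3 (it)^3/6)^2$, which combine to produce the stated expression for $q_6(x)\phi(x)$.

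The main obstacle is the uniform error bound of the form $\co{1}/\left((1+|x|^k) n^{(k-2)/2}\right)$, as opposed to merely $\co{n^{-(k-2)/2}}$. The factor $(1+|x|^k)^{-1}$ arises by integrating by parts $k$ times in $u$, each step transferring a factor of $x^{-1}$ from the oscillatory exponential to a derivative of the integrand. This requires controlling derivatives of $\hat f(u/\sigma\sqrt n)^n$ uniformly in $u$, including explicit bounds showing that these derivatives are integrable and decay like $\e^{-cu^2}$ on the central region. Petrov's argument handles this via an induction on $k$, simultaneously tracking the polynomial degree, the cumulant order contributing at each step of $n^{-1/2}$, and the $L^1$ size of the relevant derivatives after change of variables.
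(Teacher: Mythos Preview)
The paper does not prove this lemma at all: it appears in Appendix~B as one of several background approximations quoted from the literature, with the citation ``\cite[Theorem 2]{petrov1964local}'' in the lemma header and no accompanying proof. Your Fourier-inversion outline is essentially Petrov's original argument, and the sketch is correct, including the identification of the uniform $(1+|x|^k)^{-1}$ decay via repeated integration by parts as the delicate point. Since the paper treats this as a black-box citation, there is nothing to compare against; your proposal simply supplies what the paper omits by reference.
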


\begin{lemma}[{Exponential upper-tail for Poisson distribution \cite[Theorem 1]{po}} ]\label{poisson-tail}
If $S$ has Poisson distribution with mean $\lambda$, then
\[
    \PP{|S- \lambda| > x} \leq 2\e^{-\frac{x^2}{2(\lambda + x)}}\,.
\]
\end{lemma}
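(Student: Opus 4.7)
The plan is a standard Chernoff-bound argument for Poisson random variables, separating the upper and lower tails. Let $\varphi(t) \defeq \log \E{e^{tS}} = \lambda(e^t - 1)$ be the cumulant generating function, and define the Cram\'er rate function $h(u) \defeq (1+u)\log(1+u) - u$, which is nonnegative on $[-1,\infty)$ with $h(0) = 0$.

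For the upper tail, I would apply Markov's inequality to $e^{tS}$ at $t = \log(1 + x/\lambda) > 0$, which is the optimizing choice, to get
\[
    \PP{S \geq \lambda + x} \leq e^{-t(\lambda+x)}\, e^{\lambda(e^t - 1)} = \exp\bigl(-\lambda\, h(x/\lambda)\bigr).
\]
For the lower tail with $0 < x < \lambda$, the analogous computation at $t = \log(1 - x/\lambda) < 0$ yields
\[
    \PP{S \leq \lambda - x} \leq \exp\bigl(-\lambda\, h(-x/\lambda)\bigr),
\]
and for $x \geq \lambda$ the probability is already $0$ since $S \geq 0$. A union bound over the two tails then introduces a factor of $2$.

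The key analytic step is the inequality
\[
    h(u) \geq \frac{u^2}{2(1+u)} \quad \text{for all } u \geq 0,
\]
which, applied with $u = x/\lambda$ in both tail bounds, yields $\lambda h(\pm x/\lambda) \geq \lambda \cdot \frac{(x/\lambda)^2}{2(1 + x/\lambda)} = \frac{x^2}{2(\lambda + x)}$. For the lower tail one also uses the elementary observation $h(-v) \geq h(v)$ for $v \in [0,1]$ (clear from the Taylor series $h(v) = \sum_{k\geq 2}\frac{(-1)^k v^k}{k(k-1)}$), so the same inequality suffices. To establish the displayed inequality, I would set $g(u) \defeq 2(1+u)^2 h(u) - (1+u) u^2$ and differentiate twice: the second derivative simplifies to $4\log(1+u) \geq 0$, and since $g(0) = g'(0) = 0$ this gives $g(u) \geq 0$.

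The main obstacle is really only the elementary inequality above; the rest is the textbook Chernoff computation, using the explicit form of the Poisson MGF. Combining the two tail bounds produces
\[
    \PP{|S - \lambda| > x} \leq 2 \exp\!\left(-\frac{x^2}{2(\lambda + x)}\right),
\]
as claimed.
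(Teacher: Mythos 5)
Your proof is correct, but note that the paper itself contains no argument for \cref{poisson-tail}: it is imported verbatim from the literature (\cite[Theorem 1]{po}), so your Chernoff--Bennett derivation is a self-contained substitute rather than a variant of an internal proof. The structure is the standard one: exponential Markov at the optimal tilt gives $\PP{S\ge \lambda+x}\le e^{-\lambda h(x/\lambda)}$ and $\PP{S\le \lambda-x}\le e^{-\lambda h(-x/\lambda)}$ with $h(u)=(1+u)\log(1+u)-u$, and the inequality $h(u)\ge \frac{u^2}{2(1+u)}$ together with $h(-v)\ge h(v)$ on $[0,1]$ (clear from the alternating series, as you say) converts both exponents into $\frac{x^2}{2(\lambda+x)}$; the union bound gives the factor $2$, and for $x\ge\lambda$ the lower-tail event is empty because the lemma's inequality is strict. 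One small slip in the write-up: with your stated $g(u)=2(1+u)^2h(u)-(1+u)u^2$ the second derivative is $g''(u)=12(1+u)\log(1+u)-8u$, not $4\log(1+u)$; the clean identity $g''(u)=4\log(1+u)$ belongs to the more natural choice $g(u)=2(1+u)h(u)-u^2$, which is what the target inequality reduces to after dividing by the positive factor $1+u$. Either choice works, since in both cases $g(0)=g'(0)=0$ and $g''\ge 0$ (for your version, $(1+u)\log(1+u)\ge u$ gives $g''\ge 4u\ge 0$), so this is a cosmetic mismatch rather than a gap; alternatively, the lower tail follows even more directly from the standard bound $h(-v)\ge v^2/2$ for $v\in[0,1]$.
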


\bibliography{discrep}
\bibliographystyle{acm}

\end{document}